\documentclass[11pt]{article}

\usepackage{latexsym,color,amsmath,amsthm,amssymb,amscd,amsfonts,arydshln,cancel,setspace}
\usepackage{bbm}

\usepackage[left=2.0cm, top=2.45cm,bottom=2.45cm,right=2.0cm]{geometry}

\usepackage{tocloft}
\setlength{\cftbeforesecskip}{0pt}              

\setcounter{tocdepth}{1} 

\usepackage[T1]{fontenc}
\usepackage{lmodern}
\usepackage[utf8]{inputenc}

\usepackage{amsmath}
\usepackage{amssymb,amsthm,graphicx,xcolor,tikz}

\usepackage[british]{babel}
\usepackage{bbm}

\usepackage[
    bookmarks=true,
    bookmarksnumbered=true,
    bookmarksopen=true,
    unicode=true,
    pdftoolbar=true,
    pdfmenubar=true,
    pdffitwindow=false,
    pdfstartview={FitH},
    pdftitle={},
    pdfauthor={},
    pdfsubject={},
    pdfcreator={},
    pdfproducer={},
    pdfkeywords={},
    pdfnewwindow=true,
    colorlinks=true,
    linkcolor=black,
    citecolor=black,
    filecolor=black,
    urlcolor=black]{hyperref}
    
\pdfsuppresswarningpagegroup=1

\makeatletter
    \def\@cite#1#2{[\textbf{#1}\if@tempswa , #2\fi]}	
    \def\@biblabel#1{[#1]}								
\makeatother

\numberwithin{equation}{section}
\numberwithin{figure}{section}



\newtheorem {theorem}{Theorem}[section]
\newtheorem {proposition}[theorem]{Proposition}
\newtheorem {lemma}[theorem]{Lemma}
\newtheorem {corollary}[theorem]{Corollary}
\newtheorem {example}[theorem]{Example}

\theoremstyle{definition}

\newtheorem {remark}[theorem]{Remark}


\newcommand{\R}{\mathbb{R}}
\renewcommand{\S}{\mathbb{S}}

\newcommand{\cA}{\mathcal{A}}

\newcommand{\cE}{\mathcal{E}}

\newcommand{\cC}{\mathcal{C}}


\newcommand{\bs}{\mathbf{s}}

\newcommand{\by}{\mathbf{y}}
\newcommand{\bx}{\mathbf{x}}

\newcommand{\bz}{\mathbf{z}}
\newcommand{\bu}{\mathbf{u}}
\newcommand{\bn}{\mathbf{n}}
\newcommand{\be}{\mathbf{e}}
\newcommand{\bv}{\mathbf{v}}
\newcommand{\bw}{\mathbf{w}}
\newcommand{\bo}{\mathbf{o}}

\DeclareMathOperator{\Vol}{vol}
\DeclareMathOperator{\vol}{vol}

\DeclareMathOperator{\interior}{int}

\DeclareMathOperator{\bd}{bd}

\DeclareMathOperator{\as}{as}
\DeclareMathOperator{\Sp}{Sp}

\DeclareMathOperator*{\argmin}{arg\,min}

\newcommand{\dint}{\mathrm{d}}
   
\renewcommand{\phi}{\varphi}



\begin{document}

\title{\bfseries The $L_p$-Floating Area, Curvature Entropy, and\\ Isoperimetric Inequalities on the Sphere}

\author{%
    Florian Besau\footnotemark[1]%
    \and Elisabeth~M.~Werner\footnotemark[2] %
\footnotemark[3]
}

\date{}
\renewcommand{\thefootnote}{\fnsymbol{footnote}}
\footnotetext[1]{%
    Technische Universität Wien, Austria. Email: florian@besau.xyz
}

\footnotetext[2]{%
    Case Western Reserve University, USA. Email: elisabeth.werner@case.edu}

\footnotetext[3]{Partially supported by  NSF grant DMS-2103482}

\maketitle

\begin{abstract}\noindent

\noindent
We explore analogs of classical centro-affine invariant isoperimetric inequalities, such as the Blaschke--Santaló inequality and the $L_p$-affine isoperimetric inequalities, for convex bodies in spherical space. Specifically, we establish an isoperimetric inequality for the floating area and prove a stability result based on the spherical volume difference.

The floating area has previously been studied as a natural extension of classical affine surface area to non-Euclidean convex bodies in spaces of constant curvature. In this work, we introduce the $L_p$-floating areas for spherical convex bodies, extending Lutwak’s centro-affine invariant family of $L_p$-affine surface area measures from Euclidean geometry. We prove a duality formula, monotonicity properties, and isoperimetric inequalities associated with this new family of curvature measures for spherical convex bodies.

Additionally, we propose a novel curvature entropy functional for spherical convex bodies, based on the $L_p$-floating area, and establish a corresponding dual isoperimetric inequality.

Finally, we extend our spherical notions to space forms with non-negative constant curvature in two distinct ways. One extension asymptotically connects with centro-affine geometry on convex bodies as curvature approaches zero, while the other converges with Euclidean geometry. Notably, our newly introduced curvature entropy for spherical convex bodies emerges as a natural counterpart to both the centro-affine entropy and the Gaussian entropy of convex bodies in Euclidean space.

\smallskip\noindent
    \textbf{Keywords.}
        affine surface area,
        centro-affine entropy,
        curvature entropy,
        floating area,
        spherical curvature entropy, 
        spherical convex body
        
    \smallskip\noindent
    \textbf{MSC 2020.} Primary  52A55; Secondary 28A75, 52A20, 53A35.
\end{abstract}

\tableofcontents

\section{Introduction and main results}

The $L_p$-Brunn--Minkowski theory, an extension of the classical Brunn--Minkowski theory, lies at the foundation of geometric analysis. At its core are affine invariants associated with convex bodies, among which the \emph{$L_p$-affine surface areas} are most notable. These invariants, a family of centro-affine curvature measures, were introduced by Lutwak \cite{Lutwak:1996} and are defined as follows:
\begin{equation*}
    \as_p(K) = d \int_{\bd K} \kappa_0(K,\bx)^{\frac{p}{d+p}} \, V_K(\dint\bx) \qquad \text{for $p\geq 1$},
\end{equation*}
where 
$K$ is a compact convex set in $\R^d$ with the origin in its interior. In this formula, $\kappa_0(K,\bx)$ denotes the centro-affine curvature of 
$K$, and $V_K$ represents the cone volume measure; see Section \ref{sec:entropy} for precise definitions. 
This family of measures includes Blaschke's classical affine surface area when $p=1$ and the $0$-homogeneous centro-affine curvature measure when 
$p=d$. The latter, among other applications, arises naturally in Funk geometry, as discussed by Faifman \cite{Faifman:2024}.
\smallskip

Let $B_2^d$ be the centered Euclidean unit ball in $\R^d$ and let $B_K$ denote the centered Euclidean ball such that $\vol_d(B_K)=\vol_d(K)$. The \emph{$L_p$-affine isoperimetric inequality}, for a centered convex body $K\subset \R^d$, is
\begin{equation}\label{eqn:Lp-affine-ineq}
    \as_p(K) \leq \as_p(B_K) = d\vol_d(B_2^d)^{\frac{2p}{d+p}} \vol_d(K)^{\frac{d-p}{d+p}}, 
\end{equation}
with equality if $K$ is a ellipsoid centered at the origin.
These measures have proven to be invaluable in various contexts, particularly as they represent the natural (centro-)affine invariant curvature measures within the theory of valuations on convex bodies \cite{LR:1999, LR:2010, HP:2014}; see also \cite{Alesker:2018, BKW:2023, Knoerr:2023, LM:2023, SW:2018}. The inequalities \eqref{eqn:Lp-affine-ineq} have been further generalized to cases where $p\in\R\setminus\{-d\}$ and extended to Orlicz-affine surface areas, as explored in \cite{Ludwig:2010, LR:2010, MW:2000, Werner:2002, WY:2008, Ye:2015}. Additionally, dual Orlicz-affine isoperimetric inequalities were developed by Ye \cite{Ye:2016}. For insights into a centro-projective isoperimetric inequality, see Vernicos \& Yang \cite{VY:2019}, and for the $L_p$-Steiner formula, see \cite{TW:2019, TW:2023}.
\smallskip

Recently, several extensions of concepts from Euclidean Brunn--Minkowski theory to non-Euclidean geometries have been developed; see \cite{BHRS:2017, BGRSTW:2024,BHPS:2023, DKY:2018, KMTT:2019}. In our earlier work, we introduced a natural non-Euclidean counterpart to Blaschke's affine surface area, known as the \emph{floating area} \cite{BLW:2018, BW:2016, BW:2018}, for convex bodies in spaces of constant curvature. In our most recent work \cite{BW:2024}, we proposed a natural analog to the $L_p$-affine surface area for $p=-d/(d+2)$. In this paper, we aim to extend this family to all $p\in\R\setminus\{-d\}$ and explore new isoperimetric inequalities.
\smallskip

Closely related to inequality \eqref{eqn:Lp-affine-ineq} is the classical \emph{Blaschke--Santaló inequality}, see, for example, the recent survey \cite{FMZ:2023}. For a convex body $K\subset \R^d$ with non-empty interior, it states that 
\begin{equation}\label{eqn:BS}
    \min_{\bx\in K} \vol_d((K-\bx)^\circ) \leq \vol_d(B_K^\circ)= \vol_d(B_2^d)^2 \vol_d(K)^{-1},
\end{equation}
where $K^\circ=\{\by : \bx\cdot \by \leq 1 \text{ for all $\bx\in K$}\}$ denotes the polar body of $K$.
This inequality is invariant under affine transformations, and equality in \eqref{eqn:BS} holds if and only if $K$ is an ellipsoid.

The $L_p$-affine isoperimetric inequality in \eqref{eqn:Lp-affine-ineq} can be derived from the Blaschke--Santaló inequality, as for a centered convex body $K$, we have \begin{equation}\label{eqn:Lp-affine-weaker}
    \as_p(K) \leq d\vol_d(K^\circ)^{\frac{p}{d+p}} \vol_d(K)^{\frac{d}{d+p}}.
\end{equation}
Notably, the Blaschke--Santaló inequality and the $L_p$-affine isoperimetric inequality for $p=1$ are equivalent.
\smallskip

It is well-known that the minimal position of $K$ in the left-hand side of \eqref{eqn:BS} is realized by a uniquely determined point $\bs=\bs(K)$ in the interior of $K$--the \emph{Santaló point}. Then 
\begin{equation*}
    \vol_d((K-\bs)^\circ) = \min_{\bx\in K} \vol_d((K-\bx)^\circ).
\end{equation*}
Moreover, $\bx\in K$ is the Santaló point if and only if $(K-\bx)^\circ$ is centered, that is, $(K-\bx)^\circ$ has its centroid at the origin. Thus, if $K$ is centered and since $K=(K^\circ)^\circ$, \eqref{eqn:BS} is equivalent to
\begin{equation*}
    \vol_d(K^\circ) \leq \vol_d(B_K^\circ).
\end{equation*}
\smallskip

Stability results for inequality \eqref{eqn:Lp-affine-ineq} were established in \cite{CW:2017}, while stability versions for the Blaschke--Santaló inequality \eqref{eqn:BS} were first studied by Böröczky \cite{Bor:2010}; see also \cite{BH:2010, Ivaki:2014, Ivaki:2015}. Over the past decade, numerous generalizations of \eqref{eqn:BS} have emerged, including functional analogs \cite{AAKM:2004, CKLR:2024, FGSZ:2024, FM:2007, HJM:2020, KW:2021, Rotem:2014}, with stability versions in \cite{BBF:2014}.

Further extensions include $L_p$ \cite{LYZ:2000, LZ:1997} and Orlicz versions \cite{XZ:2022}, which have also led to a new sine Blaschke--Santaló inequality for the sine-polar body of star bodies \cite{HLXY:2022}; lower-dimensional projections \cite{Vritsiou:2024}; the extremal affine surface area \cite{GHSW:2020, Hoehner:2022}; and higher-order convex bodies and functions \cite{HLPRY:2023, LMU:2024}.

Related inequalities for difference bodies, projection bodies, and more general Minkowski endomorphisms can be found in \cite{HS:2019, HS:2023, LRZ:2023, Lutwak:1996:2}, while versions for complex projection bodies appear in \cite{AB:2011, Haberl:2019}. Recently, E.\ Milman \& Yehudayoff \cite{MY:2023} resolved a major conjecture by Lutwak by establishing sharp isoperimetric inequalities for the entire family of affine quermassintegrals, which includes the Blaschke--Santaló inequality; see also \cite{DPP:2019} for a generalization to (dual) affine quermassintegrals for flag-manifolds.
\smallskip

Thus, extending the Blaschke--Santaló inequality to non-Euclidean geometries is of particular interest. In spherical space, this extension was first investigated by Gao, Hug, \& Schneider \cite{GHS:2003}. It gives
\begin{equation}\label{eqn:BS_spherical}
    \vol_d^s(K^*) \leq \vol_d^s(C_K^*),
\end{equation}
for a spherical convex body $K\subset \S^d$, where $K^*$ is the spherical dual body and $C_K$ is a geodesic ball such that $\vol_d^s(K)=\vol_d^s(C_K)$, see also \cite{HP:2021} for a randomized version.

More recently, Hug \& Reichenbacher \cite{HR:2017} derived a stability version for this spherical analog of the Blaschke--Santaló inequality, also see Theorem \ref{thm:stab_HR}, which states that
\begin{equation}\label{eqn:stab_HR}
    \vol_d^s(K^*) \leq \left(1- C \Delta(K)^2\right) \vol_d^s(C_K^*),
\end{equation}
where $C=C(K)>0$ and $\Delta(K) = \vol_d^s(K\triangle C_K(\bo))$ measures the distance between $K$ and the geodesic ball of the same volume centered at a special point $\bo=\bo(K)\in\interior K$, which we call the \emph{GHS-center} of $K$. Recently, also the isodiametric inequality with stability was proven for spherical convex bodies by Böröczky \& Sagmeister \cite{BS:2020,BS:2023}, also see \cite{HCMF:2015} for an earlier stronger result in the two dimensional case.
\smallskip

The spherical analog of the Blaschke--Santaló inequality \eqref{eqn:BS_spherical} differs remarkably from its classical Euclidean counterpart \eqref{eqn:BS}. 
In the Euclidean case, the inequality depends on the centro-affine geometry of $K$ and thus on the position of the origin. In contrast, the spherical version is invariant under any isometry of spherical space, meaning there is no naturally distinguished point within $K\subset\S^d$ as there is in the Euclidean case. However, from the stability version \eqref{eqn:stab_HR}, we see that the GHS-center provides a natural point within $K$ that may play a role similar to the Euclidean centroid. It should be noted, however, that the GHS-center generally differs from other classical notions of spherical centroids of $K$; see Remark \ref{rmk:GHS}.

\subsection{\texorpdfstring{$L_p$}{Lp}-floating area and isoperimetric inequalities on the sphere}

As previously mentioned, the \emph{floating area} was introduced in earlier work \cite{BW:2016, BW:2018, BW:2024} as a natural analog to Blaschke's affine surface area. In this paper, we build on these investigations, focusing specifically on isoperimetric inequalities in spherical space.
For $d\geq 2$, $p\in \R\setminus\{-d\}$, and a proper spherical convex body $K\subset \S^d$ (where we call $K$ proper if it is contained in an open half-space), we introduce the \emph{$L_p$-floating area}
\begin{equation*}
    \Omega_p^s(K) := \int_{\bd K} H_{d-1}^s(K,\bu)^{\frac{p}{d+p}}\, \vol_{\bd K}^s(\dint\bu).
\end{equation*}
Here, $H_{d-1}^s(K,\cdot)$ denotes the (generalized) spherical Gauss--Kronecker curvature and $\vol_{\bd K}^s$ is the $(d-1)$-dimensional Hausdorff measure of $\bd K\subset \S^d$. For $p=1$ this gives the floating area, which was naturally derived in \cite{BW:2016,BW:2018} as the volume-derivative of the floating body. For $p=-d(d+2)$ this $L_p$-floating area was derived in \cite{BW:2024} as the volume-derivative of the dual body of the floating body.

For $p=1$ an analog to \eqref{eqn:Lp-affine-weaker} was derived in \cite[Thm.\ 7.6]{BW:2016}, and in Theorem \ref{thm:sphere_inequalities}, we establish that
\begin{equation}\label{eqn:Lp-sphere-weaker}
    \Omega_p^s(K) \leq P^s(K^*)^{\frac{p}{d+p}} P^s(K)^{\frac{d}{d+p}},
\end{equation}
for $p>0$, where $P^s(K)=\vol_{\bd K}^s(\bd K)$ is the spherical surface area (or perimeter) of $K$. 
Equality holds in \eqref{eqn:Lp-sphere-weaker} if and only if $K$ is a geodesic ball.
\smallskip

Compactness of $\S^d$ and the upper semi-continuity of $\Omega_p^s$ leads us to conjecture that
\begin{equation}\label{eqn:floating_ineq}
    \Omega_p^s(K) \overset{?}\leq P^s(C_K^*)^{\frac{p}{d+p}} P^s(C_K)^{\frac{d}{d+p}}= \Omega_p^s(C_K) \qquad \text{for $p\geq 1$}.
\end{equation}

In the Euclidean case, for $p=1$, we have seen that \eqref{eqn:Lp-affine-weaker} gives rise to the affine isoperimetric inequality \eqref{eqn:Lp-affine-ineq} by the Blaschke--Santaló inequality. Remarkably, \eqref{eqn:Lp-sphere-weaker} does not directly compare to \eqref{eqn:floating_ineq} in the same way.
Indeed, we have by the classical isoperimetric inequality 
\begin{equation*}
    P^s(C_K)\leq P^s(K).
\end{equation*}
Moreover, recent major progress on the conjecture Aleksandrov--Fenchel inequalities in spherical space were made using curvature flow techniques, see \cite{CGLS:2022,CS:2022, HL:2023, MS:2016, WX:2015}, which, for $d\geq 3$, give rise to the dual isoperimetric inequality
\begin{equation}\label{eqn:dual_iso}
    P^s(K^*) \leq P^s(C_K^*),
\end{equation}
for spherical convex bodies of class $\cC^2_+$, see Section \ref{sec:curvature_flow} where we review the current state of the art for (dual) isoperimetric inequalities on the sphere. 
Thus for $d\geq 3$ the upper bounds in \eqref{eqn:Lp-sphere-weaker} and \eqref{eqn:floating_ineq} can not be compared in general.
However, for $d=2$ we have $P^s(K)=P^s(C_K^*)$ for all proper spherical convex bodies $K$, which yields that the upper bound in \eqref{eqn:floating_ineq} is stronger than the bound in \eqref{eqn:Lp-sphere-weaker}.
Progress has recently been made by Basit, Hoehner, Lángi \& Ledford \cite{BHLL:2024}, who were able to verify that for $p=1$ inequality \eqref{eqn:floating_ineq} holds true for smooth symmetric spherical convex bodies on $\S^2$, which therefore improves \eqref{eqn:Lp-affine-weaker} in this case.

\medskip
We use the notion of GHS-center, see Section \ref{sec:GHS-center}, to derive the following
\begin{theorem}
    Let $d\geq 3$ and let $K\subset \S^d$ be a spherical convex body of class $\cC^2_+$ such that either $K\subset C_{\bo}(\alpha_d)$ or $K\supset C_{\bo}(\beta_d)$, where $\tan \alpha_d=\sqrt{d(d-2)}$, $\tan\beta_d = \sqrt{d}$, and $\bo=\bo(K)$ is the GHS-center of $K$. Then
    \begin{equation}\label{eqn:lp_float}
        \Omega^s_p(K)\leq \Omega^s_p(C_K) \qquad \text{for $p\geq 1$},
    \end{equation}
    with equality if and only if $K$ is geodesic ball.
\end{theorem}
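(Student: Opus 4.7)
The starting point is the Hölder-type bound \eqref{eqn:Lp-sphere-weaker}:
\[
\Omega_p^s(K) \;\leq\; P^s(K^*)^{p/(d+p)}\, P^s(K)^{d/(d+p)}.
\]
For a geodesic ball this is an equality, because $H_{d-1}^s(C_K,\cdot)$ is constant on $\bd C_K$; hence $\Omega_p^s(C_K) = P^s(C_K^*)^{p/(d+p)}\, P^s(C_K)^{d/(d+p)}$, and the desired inequality \eqref{eqn:lp_float} reduces to the mixed perimeter estimate
\[
P^s(K^*)^p \, P^s(K)^d \;\leq\; P^s(C_K^*)^p \, P^s(C_K)^d, \qquad p \geq 1.
\]

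The two factors here pull against each other: the classical spherical isoperimetric inequality gives $P^s(K) \geq P^s(C_K)$, whereas the dual spherical isoperimetric inequality \eqref{eqn:dual_iso} gives $P^s(K^*) \leq P^s(C_K^*)$. So the reduced inequality is the quantitative statement that, in the weights $p:d$, the ``dual slack'' dominates the ``primal slack'' on a logarithmic scale. The plan is to invoke the spherical Aleksandrov--Fenchel inequalities for quermassintegrals obtained via volume-preserving curvature flows in \cite{CGLS:2022, CS:2022, HL:2023, MS:2016, WX:2015} and surveyed in Section \ref{sec:curvature_flow}. The angles $\tan\alpha_d = \sqrt{d(d-2)}$ and $\tan\beta_d = \sqrt{d}$ are precisely the thresholds at which either $K$ itself (when $K \subset C_\bo(\alpha_d)$) or its polar $K^*$ (via the implied inclusions $K^* \supset C_\bo(\pi/2 - \alpha_d)$ or $K^* \subset C_\bo(\pi/2 - \beta_d)$) lies in the $h$-convex regime where the spherical AF-type inequalities are proved; centring by the GHS-point $\bo(K)$ makes these inclusions compatible with the spherical Blaschke--Santaló inequality \eqref{eqn:BS_spherical}, which in turn controls $\vol_d^s(K^*)$ by $\vol_d^s(C_K^*)$ and allows the volume parameters to be eliminated.

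The main obstacle is exactly this combination step: extracting, from the individual spherical AF bounds for $K$ and for $K^*$, the sharp mixed inequality with the weights in the ratio $p:d$. The $\mathcal{C}^2_+$ regularity assumption gives access to the smooth curvature-flow framework of the cited works, and the two possible size hypotheses correspond to the two regimes where the flow arguments close up. Once the mixed estimate is in hand, the equality case is routine: equality in \eqref{eqn:Lp-sphere-weaker} forces $H_{d-1}^s(K,\cdot)$ to be constant on $\bd K$, and equality in the underlying spherical AF or Blaschke--Santaló inequalities forces $K$ to be a geodesic ball.
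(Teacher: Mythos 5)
There is a genuine gap, and it sits in your very first step. You replace $\Omega_p^s(K)$ by the upper bound \eqref{eqn:Lp-sphere-weaker} and then try to prove the ``mixed perimeter estimate'' $P^s(K^*)^p\,P^s(K)^d \leq P^s(C_K^*)^p\,P^s(C_K)^d$. This would indeed suffice, but it is false in general, which is exactly the point the paper makes when it observes that the upper bounds in \eqref{eqn:Lp-sphere-weaker} and \eqref{eqn:floating_ineq} cannot be compared for $d\geq 3$: the factor $P^s(K)\geq P^s(C_K)$ goes the wrong way. Concretely, on $\S^3$ one has $P^s(K^*)=4\pi-P^s(K)$ and $P^s(C_K^*)=4\pi-P^s(C_K)$, so for $p=1$, $d=3$ your mixed estimate reads $(4\pi-a)a^3\leq(4\pi-b)b^3$ with $a=P^s(K)\geq b=P^s(C_K)$; for small bodies ($b<3\pi$) the function $t\mapsto(4\pi-t)t^3$ is increasing, so the inequality fails for any non-ball $K$ of small volume. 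No amount of Aleksandrov--Fenchel input can rescue a reduction to a false statement. Moreover, even granting the reduction, you explicitly leave the ``combination step'' as an unproved obstacle, so the argument is incomplete on its own terms; and your guess that $\alpha_d,\beta_d$ are $h$-convexity thresholds for the flow results is not what happens.

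The paper's route is different and worth internalizing. The case $p=1$ is proved directly (Theorems \ref{thm:sfa-ineq} and \ref{thm:lambda_FI}, with no $\cC^2_+$ hypothesis): one applies H\"older's inequality to the gnomonic-projection formula \eqref{eqn:floating_area_projected}, bounds the first factor by $d\vol_d^e(\overline{K}^\circ)$ and controls it with the \emph{Euclidean} Blaschke--Santal\'o inequality in the GHS-centered chart, and handles the second factor by Jensen's inequality applied to the function $G_\lambda$; the threshold $\tan\alpha_d=\sqrt{d(d-2)}$ is precisely where $G_\lambda$ stops being concave, which is why the containment hypothesis appears. The general case $p\geq 1$ then follows from the monotonicity \eqref{eqn:monotone} of $p\mapsto(\Omega_p^s(K)/P^s(K^*))^{1+p/d}$ combined with the dual isoperimetric inequality $P^s(K^*)\leq P^s(C_K^*)$ of Proposition \ref{cor:polar_iso} (this is the only place the curvature-flow results and the $\cC^2_+$ assumption enter). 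The alternative hypothesis $K\supset C_{\bo}(\beta_d)$ is treated via the duality $\Omega_p^s(K^*)=\Omega_{d^2/p}^s(K)$ of Theorem \ref{thm:duality}, the spherical Blaschke--Santal\'o inequality, and the monotonicity of $\alpha\mapsto\Omega_p^s(C(\alpha))$ for $\tan\alpha\geq\sqrt{d/p}$, which is where $\tan\beta_d=\sqrt d$ comes from.
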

This result follows in Corollary \ref{cor:p-float_ineq}, by first establishing the result for $\Omega_1^s$ (without the $\cC^2_+$ curvature condition), see Theorem \ref{thm:sfa-ineq}, and combining it with
the duality
\begin{equation*}
    \Omega_p^s(K^*) = \Omega_{d^2/p}^s(K),
\end{equation*}
see Theorem \ref{thm:duality}, as well as a monotonicity result for $\Omega_p^s$, see Theorem \ref{thm:sphere_inequalities}, and the dual isoperimetric inequality \eqref{eqn:dual_iso} which we recall in Proposition \ref{cor:polar_iso}. We firmly believe that the containment conditions $K\subset C_{\bo}(\alpha_d)$ or $K\supset C_{\bo}(\beta_d)$ are superfluous and \eqref{eqn:lp_float} should hold true for any proper spherical convex body for $d\geq 2$. However, our current strategy of proof apparently cannot be employed without these additional assumptions. 

\smallskip
Moreover, for $p=1$, we also derive the following stability result.
\begin{theorem}
    Let $d\geq 3$ and let $K\subset \S^d$ be a spherical convex body such that $K\subset C_{\bo}(\alpha_d)$. Then
    \begin{equation*}
        \Omega^s_1(K) \leq (1-C\Delta(K)^2)\, \Omega^s_1(C_K),
    \end{equation*}
    where $C=C(K)>0$, and $\Delta(K)=\vol_d^s(K\triangle C_K(\bo))$ measures the distance between $K$ and the geodesic ball centered in the GHS-center of $K$ with the same volume as $K$.
\end{theorem}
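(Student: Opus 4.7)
The plan is to derive a spherical Santal\'o-type upper bound for $\Omega_1^s(K)$ of the form
\begin{equation*}
    \Omega_1^s(K)^{d+1} \leq \Phi\!\left(\vol_d^s(K)\right) \cdot \vol_d^s(K^*),
\end{equation*}
with equality when $K$ is a geodesic ball, and then substitute the Hug--Reichenbacher stability bound for the spherical Blaschke--Santal\'o inequality (Theorem \ref{thm:stab_HR}) in place of the plain inequality $\vol_d^s(K^*) \leq \vol_d^s(C_K^*)$. This mirrors the strategy by which Cheung--Werner deduced the Euclidean stability of the $L_p$-affine isoperimetric inequality from B\"or\"oczky's stability of the classical Blaschke--Santal\'o inequality.

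First I would establish the Santal\'o-type bound by applying H\"older's inequality to
\begin{equation*}
    \Omega_1^s(K) = \int_{\bd K} H_{d-1}^s(K,\bu)^{1/(d+1)} \,\vol_{\bd K}^s(\dint\bu),
\end{equation*}
with a split that pairs $(H_{d-1}^s)^{1/(d+1)}$ against a complementary weight so that the two resulting integrals reproduce $\vol_d^s(K)$ and $\vol_d^s(K^*)$, respectively. The second factor is identified via the spherical polar identity relating $H_{d-1}^s(K,\bu)\,\vol_{\bd K}^s(\dint\bu)$ to the volume element of $K^*$ under the spherical Gauss map; this is the spherical analog of Lutwak's Euclidean bound $\as(K)^{d+1} \leq d^{d+1}\vol_d(B_2^d)^2\,\vol_d(K)^{d-1}\vol_d(K^\circ)$. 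The containment $K \subset C_\bo(\alpha_d)$ keeps the spherical polarity map and the H\"older exponents within their admissible ranges, and the equality case for geodesic balls follows from H\"older's equality condition.

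Next, applying Theorem \ref{thm:stab_HR} to bound $\vol_d^s(K^*) \leq (1 - C_0\Delta(K)^2)\vol_d^s(C_K^*)$ and using that $\vol_d^s(K) = \vol_d^s(C_K)$, together with equality in the Santal\'o-type bound for $K = C_K$, yields
\begin{equation*}
    \Omega_1^s(K)^{d+1} \leq (1 - C_0\Delta(K)^2)\,\Omega_1^s(C_K)^{d+1}.
\end{equation*}
Taking $(d+1)$-th roots and using the elementary estimate $(1-y)^{1/(d+1)} \leq 1 - y/(d+1)$ for $y \in [0,1]$ gives the claim with $C = C_0/(d+1)$; the range where $C_0\Delta(K)^2$ exceeds $1$ is subsumed by the non-stability bound of Theorem \ref{thm:sfa-ineq} after a final rescaling of $C$ using the boundedness of $\Delta(K)$.

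The main obstacle is identifying the correct H\"older split so that the two resulting factors on the right are exactly $\vol_d^s(K)^{(d-1)/(d+1)}$ and $\vol_d^s(K^*)^{1/(d+1)}$. This hinges on the spherical polar identity linking $H_{d-1}^s$ to the volume element of $K^*$, which already underlies the proof of Theorem \ref{thm:sfa-ineq}; once that identity is in hand the stability statement follows by direct substitution.
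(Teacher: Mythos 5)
Your architecture (a Santaló-type upper bound for $\Omega_1^s(K)$, then a stability version of the Santaló-type inequality) matches the paper's in spirit, but the specific intermediate inequality you rely on is not established and is in fact the entire difficulty. The bound $\Omega_1^s(K)^{d+1}\leq\Phi(\vol_d^s(K))\,\vol_d^s(K^*)$ with equality for geodesic balls would, combined with the spherical Blaschke--Santaló inequality $\vol_d^s(K^*)\leq\vol_d^s(C_K^*)$, immediately yield $\Omega_1^s(K)\leq\Omega_1^s(C_K)$ for \emph{all} proper bodies --- i.e.\ it is strictly stronger than Theorem \ref{thm:sfa-ineq}, which is only proved under the containment hypothesis. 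What the direct spherical Hölder split actually delivers (pairing $(H_{d-1}^s)^{1/(d+1)}$ against the constant weight and using that $H_{d-1}^s\,\vol_{\bd K}^s$ pushes forward to $\vol_{\bd K^*}^s$) is the \emph{perimeter} bound \eqref{eqn:Lp-sphere-weaker}, $\Omega_1^s(K)\leq P^s(K^*)^{\frac{1}{d+1}}P^s(K)^{\frac{d}{d+1}}$, and the paper stresses that this cannot be combined with the spherical Blaschke--Santaló inequality because $P^s(K)\geq P^s(C_K)$ goes the wrong way. To get volumes one must pass to the projective model $\overline{K}=g_{\bo}(K)$, where Hölder gives
\begin{equation*}
    \Omega_1^s(K)\leq\left(d\vol_d^e(\overline{K}^\circ)\right)^{\frac{1}{d+1}}\left(\int_{\S^{d-1}}\frac{\rho_{\overline{K}}(\bu)^d}{(1+\rho_{\overline{K}}(\bu)^2)^{\frac{d^2-1}{2d}}}\,\dint\bu\right)^{\frac{d}{d+1}};
\end{equation*}
the first factor is the \emph{Euclidean} polar volume of the projective model, which satisfies $\vol_d^e(\overline{K}^\circ)\geq\vol_d^s(K^*)$ --- the wrong direction to be replaced by the spherical dual volume. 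The second factor is controlled by Jensen's inequality applied to the function $G$, and it is exactly the concavity of $G$ that forces $K\subset C_{\bo}(\alpha_d)$; your explanation of the containment hypothesis (keeping ``the polarity map and the Hölder exponents within their admissible ranges'') does not reflect its actual role, which indicates the real obstruction has not been located.

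Because the reduction runs through $\vol_d^e(\overline{K}^\circ)$ rather than $\vol_d^s(K^*)$, the correct stability input is not Theorem \ref{thm:stab_HR} but the more elementary Lemma \ref{lem:HR}: one bounds $\vol_d^e(\overline{K}^\circ)\leq\kappa_d^2/\vol_d^e(\overline{K})$ by the Euclidean Blaschke--Santaló inequality (legitimate since $\overline{K}$ is centered at the GHS-center) and then uses the Hug--Reichenbacher stability of the \emph{projected volume}, $\vol_d^e(\overline{K})/\kappa_d\geq(1+\beta\Delta_2(K)^2)\,H(\vol_d^s(K)/\omega_{d-1})$, which is the estimate from which Theorem \ref{thm:stab_HR} is itself derived. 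Your final bookkeeping (comparing with $\Omega_1^s(C_K)$, taking $(d+1)$-th roots, absorbing the regime of large $\Delta(K)$) is sound and parallels the paper's treatment of $\frac{1}{(1-\varepsilon)^{d+1}}-1$, but without a proof of the claimed volume-based Santaló bound the argument does not close.
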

This result relies on tools developed by Hug \& Reichenbacher \cite{HR:2017} and will be proven in Section \ref{sec:stab}, see Theorem \ref{thm:stability_fa}.

\smallskip
Finally, we remark that for $p\to 0^+$ the $L_p$-floating area reduces to the perimeter functional $P^s(K)=\Omega_0^s(K)$ and \eqref{eqn:lp_float} is then reversed by the classical isoperimetric inequality. The behavior of $\Omega_p^s$ for $p\in(0,1)$ therefore seems more complex, but in Corollary \ref{cor:p-float_ineq} we do also derive a version of \eqref{eqn:lp_float} for $p\in(0,1)$ under certain containment conditions that then necessarily depend on $p$.

\subsection{Spherical curvature entropy and real-analytic extensions to space forms}

The $L_p$-affine surface was used by Paouris \& Werner in \cite{PW:2012} to introduce a centro-affine invariant \emph{entropy power functional} $\cE$. For $K\subset \R^d$ with the origin in the interior, they define
\begin{equation}\label{eqn:PW_power}
    \cE(K) = \lim_{q\to 0^+} \left(\frac{\as_q(K^\circ)}{d\vol_d(K^\circ)}\right)^{1+\frac{d}{q}} \in [0,+\infty),
\end{equation}
where $K^\circ$ denotes the polar body of $K$.

Note that $-\log\cE(K)$ is related to the Boltzmann entropy of the centro-affine curvature of $K^\circ$ with respect to the cone volume measure $V_{K^\circ}$, that is,
\begin{equation*}
    E_{PW}(K):= -\log \cE(K) = -\frac{1}{\vol_d(K^\circ)} \int_{\bd K^\circ} \left[\log \kappa_0(K^\circ,\bx)\right] \, V_{K^\circ}(\dint \bx).
\end{equation*}
Moreover, if $K$ is of class $\cC^{1,1}$, then
\begin{equation*}
    E_{PW}(K) = \frac{1}{\vol_d(K^\circ)} \int_{\bd K} \kappa_0(K,\bx) \left[\log \kappa_0(K,\bx)\right] \, V_K(\dint\bx).
\end{equation*}
The polar entropy inequality
\begin{equation*}
    \cE(K^\circ) \leq \left(\frac{\vol_d(B_2^d)}{\vol_d(K^\circ)}\right)^2= \cE(B_K^\circ),
\end{equation*}
was established in \cite[Prop.\ 3.10]{PW:2012} for a centered convex body $K\subset \R^d$ of class $\cC^2_+$, with equality if and only if $K$ is an ellipsoid.
For $d\geq 2$, it is apparently still an open conjecture that
\begin{equation}\label{eqn:PW_conj}
    \cE(K)\overset{?}{\leq }\left(\frac{\vol_d(K)}{\vol_d(B_2^d)}\right)^2= \cE(B_K),
\end{equation}
for a centered convex body $K\subset \R^d$. By the Blaschke--Santaló inequality, the polar entropy inequality would follow from \eqref{eqn:PW_conj}. In particular, \eqref{eqn:PW_conj} would imply that the centro-affine entropy $E_{PW}$ is non-negative for convex bodies $K$ with the same volume as $B_2^d$, i.e.,
\begin{equation*}
    E_{PW}(K) = -\log \cE(K) \overset{?}{\geq} -2 \log \frac{\vol_d(K)}{\vol_d(B_2^d)} = E_{PW}(B_K). 
\end{equation*}
\smallskip

Another entropy functional was used by Gage \& Hamilton \cite{GH:1986} in the context of geometric flows for the boundary curve of a $2$-dimensional convex body and extended to $d$-dimensions by Chow \cite{Chow:1991}. Following Chow \cite{Chow:1991}, we define the \emph{Gaussian entropy}
\begin{equation*}
    E_C(K) = \frac{1}{\omega_{d-1}} \int_{\bd K} H_{d-1}(K,\bx) \left[\log H_{d-1}(K,\bx)\right] \, \dint \bx,
\end{equation*}
for a convex body $K\subset \R^d$ of class $\cC^2_+$ and $\omega_{d-1}=P(B_2^d)$ is the total surface area of the unit sphere $\S^{d-1}=\bd B_2^d$. Using the affine isoperimetric inequality we derive the well-known lower bound
\begin{equation*}
    E_C(K) \geq -\frac{d+1}{d} \log \frac{\as_1(K)}{\omega_{d-1}} \geq -\frac{d-1}{d} \log \frac{\vol_d(K)}{\vol_d(B_2^d)} = E_C(B_K).
\end{equation*}
This was further improved by Guan \& Ni \cite{GN:2017} who related $E_C$ to an extension of another entropy functional that was originally introduced by Firey \cite{Fiery:1977} and previously employed by Andrews \cite{Andrews:1997, Andrews:1999}. We refer to Section \ref{sec:entropy} where we briefly recall these different entropy functionals on Euclidean convex bodies and their relations.
\smallskip

One may consider entropy heuristically as a measure of ``complexity''. Specifically, the Gaussian entropy is minimized for balls and tends towards infinity as a convex body approaches a polytope. This perspective has been explored in \cite{OMK:2020,UKSM:2012}, where Gaussian entropy of curves and surfaces is used as an index for shape analysis and design.
\smallskip

Following \eqref{eqn:PW_power} we introduce the \emph{spherical entropy power functional} by
\begin{equation*}
    \cE^s(K) := \lim_{q\to 0^+} \left(\frac{\Omega_q^s(K^*)}{P^s(K^*)}\right)^{1+\frac{d}{q}},
\end{equation*}
for a proper spherical convex body $K\subset \S^d$. This gives rise to the following curvature entropy functional.

\begin{theorem}[Spherical Curvature Entropy]
    Let $d\geq 2$ and let $K\subset \S^{d}$ be a proper spherical convex body. Then
    \begin{equation*}
        E^s(K) := - \log \cE^s(K) = \frac{1}{P^s(K^*)} \int_{\bd K^*} \log H^s_{d-1}(K^*,\bu) \, \vol_{\bd K^*}^s(\dint \bu),
    \end{equation*}
    is called \emph{spherical curvature entropy} of $K$.
    
    Furthermore, if $K$ is of class $\cC^{1,1}$, then
    \begin{equation}\label{eqn:entropy_formula_sphere}
        E^s(K) = \frac{1}{P^s(K^*)} \int_{\bd K} H_{d-1}^s(K,\bu) \left[\log H_{d-1}^s(K,\bu)\right] \, \vol_{\bd K}^s(\dint \bu).
    \end{equation}
\end{theorem}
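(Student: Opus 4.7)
The plan is to proceed in two steps: first, evaluate the $q \to 0^+$ limit defining $\cE^s(K)$ by recognising it as a power-mean limit collapsing to a geometric mean, which yields the first formula; second, apply the polarity change of variables implicit in Theorem~\ref{thm:duality} to transform the resulting boundary integral over $\bd K^*$ into one over $\bd K$, producing formula~\eqref{eqn:entropy_formula_sphere}.

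For the first step, set $t := q/(d+q)$, so that $t \to 0^+$ as $q \to 0^+$ and $1 + d/q = 1/t$. Writing $\mu := \vol_{\bd K^*}^s / P^s(K^*)$ for the normalised surface measure on $\bd K^*$ (viewed as a probability measure) and $f := H_{d-1}^s(K^*, \cdot)$, the definition of $\Omega_q^s$ rewrites as
\begin{equation*}
    \frac{\Omega_q^s(K^*)}{P^s(K^*)} = \int_{\bd K^*} f^{t}\,\dint\mu, \qquad \text{so} \qquad \cE^s(K) = \lim_{t \to 0^+}\left(\int_{\bd K^*} f^{t}\,\dint\mu\right)^{1/t}.
\end{equation*}
The classical geometric-mean limit of the $t$-th power mean gives $\cE^s(K) = \exp\bigl(\int_{\bd K^*} \log f\,\dint\mu\bigr)$, and taking $-\log$ produces the first claimed formula; this parallels the Paouris--Werner argument \cite{PW:2012} for the centro-affine entropy power. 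The interchange of limit and integral is justified by dominated convergence combined with the monotonicity of the power means $t \mapsto (\int f^t\,\dint\mu)^{1/t}$, once one knows that $\log H_{d-1}^s(K^*, \cdot)$ is $\mu$-integrable; the latter follows since $H_{d-1}^s(K^*, \cdot)$ is finite and positive $\vol_{\bd K^*}^s$-a.e.\ by Alexandrov's theorem, together with the integrability encoded in the very definition of the $L_p$-floating area.

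For the $\cC^{1,1}$ formula, let $\Phi: \bd K \to \bd K^*$ denote the spherical polarity map, which under this regularity is differentiable $\vol_{\bd K}^s$-a.e. The transformation identities underlying Theorem~\ref{thm:duality} then take the pointwise form
\begin{equation*}
    \vol_{\bd K^*}^s\bigl(\Phi(A)\bigr) = \int_{A} H_{d-1}^s(K, \bu)\,\vol_{\bd K}^s(\dint \bu), \qquad H_{d-1}^s\bigl(K^*, \Phi(\bu)\bigr) = H_{d-1}^s(K, \bu)^{-1}.
\end{equation*}
Substituting these into the first formula and using $\log(x^{-1}) = -\log x$ converts it directly into \eqref{eqn:entropy_formula_sphere} after the signs combine.

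The main obstacle is establishing the power-mean limit rigorously for all proper spherical convex bodies, where $H_{d-1}^s(K^*, \cdot)$ may vanish or blow up on sets of small $\vol_{\bd K^*}^s$-measure; this is handled in the spirit of \cite{PW:2012} by combining monotonicity of power means with the integrability built into $\Omega_p^s$. The pointwise transformation identities in the second step, while standard under full $\cC^2_+$ regularity, require under only the $\cC^{1,1}$ hypothesis a careful application of Alexandrov's second-differentiability theorem to the spherical support function of $K$.
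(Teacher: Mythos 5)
Your proof is correct and follows essentially the same route as the paper: the substitution $t=q/(d+q)$ turns the defining limit into the classical power-mean-to-geometric-mean limit, which is the same computation the paper performs via L'Hospital's rule and differentiation under the integral sign; and for the $\cC^{1,1}$ case, where the paper instead re-applies L'Hospital to the dual representation $\cE^s(K)=\lim_{p\to\infty}\bigl(\Omega_p^s(K)/\Omega_\infty^s(K)\bigr)^{1+p/d}$, your change of variables under $-\bn_K$ with Jacobian $H_{d-1}^s(K,\cdot)$ and the reciprocity $H_{d-1}^s(K^*,-\bn_K(\bu))\,H_{d-1}^s(K,\bu)=1$ is exactly the mechanism of Theorem~\ref{thm:duality} and amounts to the same thing. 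One small point: your computation yields $E^s(K)=-\frac{1}{P^s(K^*)}\int_{\bd K^*}\log H_{d-1}^s(K^*,\bu)\,\vol_{\bd K^*}^s(\dint\bu)$, with a leading minus sign as in the paper's Theorem~\ref{thm:KL}; the version of the statement quoted in the introduction drops that sign (a typo there), so your claim that the limit ``produces the first claimed formula'' is correct only modulo that correction.
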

We proof this Theorem in Section \ref{sec:entropy_sphere}. We note here that by \eqref{eqn:entropy_formula_sphere} the spherical curvature entropy $E^s$ can also be considered as a spherical analog to the Gaussian entropy in spherical space. Indeed, in Section \ref{sec:real_limit} we investigate two different real-analytic extensions to $d$-dimensional space forms $\Sp^d(\lambda)$ of constant curvature $\lambda\geq 0$. The first extension arises by fixing a point in the space-forms and rescaling radially around the point. This will connect $E^s$ in the limit $\lambda\to 0^+$ with the centro-affine invariant entropy $E_{PW}$. The second extension does not fix a point and just rescales by the curvature uniformly. In this way $E^s$ will connect in the limit $\lambda\to 0^+$ with $E_C$. Thus $E^s$ can be seen as spherical analog to both, the centro-affine entropy $E_{PW}$ and the Gaussian entropy $E_C$. A similar observation for $\Omega^s_p$ is drawn, which builds on previous investigations in \cite{BW:2024} for the special case $p=-d/(d+2)$.
\smallskip

We establish the following isoperimetric inequalities for the spherical entropy power.
\begin{theorem}
    For $d\geq 2$ and a proper spherical convex body $K\subset \S^d$, we have the \emph{spherical information inequality}
    \begin{equation*}
        \cE^s(K) \leq \frac{P^s(K)}{P^s(K^*)},
    \end{equation*}
    with equality if and only if $K$ is a geodesic ball.
    
    Moreover, we have the \emph{dual entropy inequality}
    \begin{equation*}
        \cE^s(K^*) \leq \frac{P^s(C_K^*)}{P^s(C_K)} = \cE^s(C_K^*),
    \end{equation*}
    where we additionally assume that $K$ is class $\cC^2_+$ for $d\geq 3$. Equality holds again if and only if $K$ is a geodesic ball.
\end{theorem}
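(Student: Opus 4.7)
The strategy is to deduce both inequalities from the already-established $L_p$-floating area bound \eqref{eqn:Lp-sphere-weaker}, combined with the classical and dual spherical isoperimetric inequalities.

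For the information inequality, I apply \eqref{eqn:Lp-sphere-weaker} with $K$ replaced by $K^*$ (and using $K^{**}=K$) to obtain, for every $q>0$,
\begin{equation*}
    \Omega_q^s(K^*) \leq P^s(K)^{\frac{q}{d+q}}\, P^s(K^*)^{\frac{d}{d+q}}.
\end{equation*}
Dividing by $P^s(K^*)$ and raising both sides to the $(1+d/q)$-th power gives
\begin{equation*}
    \left(\frac{\Omega_q^s(K^*)}{P^s(K^*)}\right)^{1+\frac{d}{q}} \leq \frac{P^s(K)}{P^s(K^*)},
\end{equation*}
and passing to the limit $q \to 0^+$ yields $\cE^s(K) \leq P^s(K)/P^s(K^*)$. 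For the equality characterization, I would invoke the integral representation \eqref{eqn:entropy_formula_sphere}, available under $\cC^{1,1}$ regularity, and apply Jensen's inequality to the strictly convex function $x\mapsto x\log x$ with respect to the probability measure $\vol_{\bd K}^s/P^s(K)$; noting that $\int_{\bd K} H_{d-1}^s(K,\cdot)\,\vol_{\bd K}^s = P^s(K^*)$ via the Gauss map, this directly recovers the bound and forces $H_{d-1}^s(K,\cdot)$ to be constant on $\bd K$ at equality, i.e., $K$ is a geodesic ball. The characterization then extends to arbitrary proper spherical convex bodies by a standard approximation argument together with the upper semi-continuity of $\Omega_q^s$.

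For the dual entropy inequality, I apply the information inequality to $K^*$ in place of $K$, giving $\cE^s(K^*)\leq P^s(K^*)/P^s(K)$. The classical spherical isoperimetric inequality supplies $P^s(C_K)\leq P^s(K)$, with equality iff $K$ is a geodesic ball, while the dual isoperimetric inequality \eqref{eqn:dual_iso} recalled in Proposition \ref{cor:polar_iso} provides $P^s(K^*)\leq P^s(C_K^*)$; in dimension $d\geq 3$ this second step requires the $\cC^2_+$ hypothesis, while in $d=2$ it follows from the planar identity recalled in the introduction. Combining these yields
\begin{equation*}
    \cE^s(K^*)\leq \frac{P^s(K^*)}{P^s(K)}\leq \frac{P^s(C_K^*)}{P^s(C_K)}.
\end{equation*}
Applying the equality case of the information inequality to the geodesic ball $C_K^*$ identifies $\cE^s(C_K^*)=P^s(C_K^*)/P^s(C_K)$, so the bound reads $\cE^s(K^*)\leq \cE^s(C_K^*)$. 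Tracing the chain, equality forces $P^s(C_K)=P^s(K)$, which by the classical isoperimetric equality case compels $K$ to be a geodesic ball.

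The main obstacle is the equality characterization of the information inequality: the proof proceeds through a limit $q\to 0^+$ of strict inequalities, and a naive pointwise argument does not transfer strictness to the limit. The Jensen route requires sufficient regularity, so the technically most delicate step will be the approximation argument promoting the characterization from $\cC^{1,1}$ bodies to general proper spherical convex bodies while maintaining control of the constancy condition on $H_{d-1}^s(K,\cdot)$.
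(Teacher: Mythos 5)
Your derivation of both inequalities follows the paper's route exactly: the information inequality is obtained by applying \eqref{eqn:p-iso_sphere} to $K^*$, normalizing, raising to the power $1+d/q$ and letting $q\to 0^+$ (this is precisely \eqref{eqn:entropy_bound}), and the dual entropy inequality is the chain $\cE^s(K^*)\leq P^s(K^*)/P^s(K)\leq P^s(C_K^*)/P^s(C_K)$ combining (II) with (dII), with the equality analysis via $P^s(K)=P^s(C_K)$ as in Theorem \ref{thm:entropy_polar}.

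The gap is in the equality characterization of the information inequality for general (non-smooth) bodies. Your Jensen argument is fine under $\cC^{1,1}$ regularity (where $\Omega_\infty^s(K)=P^s(K^*)$ and constancy of $H^s_{d-1}(K,\cdot)$ yields a geodesic ball via Kohlmann's theorem), but the proposed ``standard approximation argument together with upper semi-continuity'' does not promote this to arbitrary proper spherical convex bodies: a general body attaining equality cannot be replaced by smooth approximants that also attain equality, and upper semi-continuity only passes the \emph{inequality} to limits, never the rigidity statement. The paper sidesteps approximation entirely. Since the quantity $\bigl(\Omega_q^s(K^*)/P^s(K^*)\bigr)^{1+d/q}$ is monotone increasing in $q$ and sandwiched between $\cE^s(K)$ and $P^s(K)/P^s(K^*)$, equality forces equality in \eqref{eqn:p-iso_sphere} for $K^*$ at every $q>0$; and the equality case of \eqref{eqn:p-iso_sphere} is established in Theorem \ref{thm:sphere_inequalities} \emph{without} any a priori regularity, because equality there already implies $\Omega_\infty^s(K^*)=P^s(K)$, hence absolute continuity of the curvature measure $C_0(K^*,\cdot)$ with constant density, and only then is Kohlmann's characterization invoked. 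You should replace the approximation step by this argument (or reprove the equality case of \eqref{eqn:p-iso_sphere} in the same way) to close the proof.
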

These result follows in Corollary \ref{cor:entropy_bound} and Theorem \ref{thm:entropy_polar}, where for the dual entropy inequality for $d\geq 3$ we again apply the dual isoperimetric inequality \eqref{eqn:dual_iso}, which is the reason why we have to impose the additional curvature condition on $K$.
\smallskip

Finally, we mention that we conjecture that for $d\geq 2$ and a proper spherical convex body $K$ we have
\begin{equation}\label{eqn:entropy_sphere_conj}
    \cE^s(K) \overset{?}{\leq} \frac{P^s(C_K)}{P^s(C_K^*)} = \cE^s(C_K).
\end{equation}
This would also imply the dual entropy inequality by the spherical Blaschke--Santaló inequality, see Remark \ref{rmk:conjecture_entropy}.

For $d=2$ we can verify \eqref{eqn:entropy_sphere_conj} by connecting it to \eqref{eqn:floating_ineq}. Thus, from the results of Basit, Hoehner, L\'angi \& Ledford \cite{BHLL:2024}, we derive in Corollary \ref{cor:entropy_strong_bound} that \eqref{eqn:entropy_sphere_conj} is true for symmetric convex bodies $K\subset \S^2$ of class $\cC^2_+$. Thus we conclude the following corollary in terms of our spherical curvature entropy on $\S^2$.

\begin{corollary}[Positivity of spherical curvature entropy on $\S^2$] Let $K\subset \S^2$ be a symmetric convex body of class $\cC^2_+$. Then
    \begin{equation}\label{eqn:positivity}
        E^s(K) \geq E^s(C_K) 
        = - \frac{1}{2} \log \frac{4\pi \vol_2^s(K)-\vol_2^s(K)^2}{(2\pi - \vol_2^s(K))^2}.
    \end{equation}
    In particular, $E^s(K)\geq 0$ if $\vol_2^s(K) \leq \vol_2^s(C(\pi/4))=(2-\sqrt{2})\pi$, or, by the isodiametric inequality \cite{BS:2020,HCMF:2015}, if $\operatorname{diam} K\leq \frac{\pi}{2}$.
\end{corollary}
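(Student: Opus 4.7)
The plan is to derive this corollary directly from Corollary~\ref{cor:entropy_strong_bound}, which has already established the dual entropy conjecture \eqref{eqn:entropy_sphere_conj} for the class of $K$ in question (symmetric $\cC^2_+$ convex bodies on $\S^2$) via the Basit--Hoehner--L\'angi--Ledford resolution of \eqref{eqn:floating_ineq} for $p=1$ on $\S^2$. Taking $-\log$ of $\cE^s(K)\leq\cE^s(C_K)$ gives the inequality $E^s(K)\geq E^s(C_K)$, so what remains is to evaluate the right-hand side in closed form as a function of $A:=\vol_2^s(K)$, and to convert the two sufficient conditions into ``$E^s(C_K)\geq 0$''.

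First I would compute $\cE^s(C(r))$ for a geodesic ball of radius $r$. Since $C(r)^*=C(\pi/2-r)$, $P^s(C(r)^*)=2\pi\cos r$, and the spherical Gauss--Kronecker curvature along $\bd C(r)^*$ is the constant $\tan r$, the ratio $\Omega_q^s(C(r)^*)/P^s(C(r)^*)$ equals $(\tan r)^{q/(2+q)}$, and raising it to the power $(q+2)/q$ returns $\tan r$ for every $q>0$. Thus $\cE^s(C_K)=\tan r$ and $E^s(C_K)=\log\cot r$, with no actual limit to evaluate. Substituting $\cos r=(2\pi-A)/(2\pi)$ and $\sin^2 r=(4\pi A-A^2)/(4\pi^2)$ recovers the closed form displayed in \eqref{eqn:positivity}.

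For the ``in particular'' clause, $\log\cot r\geq 0$ is equivalent to $r\leq\pi/4$, i.e.\ to $A\leq 2\pi(1-\cos(\pi/4))=(2-\sqrt{2})\pi$. The diameter statement then follows from the spherical isodiametric inequality of Böröczky--Sagmeister \cite{BS:2020} (and of \cite{HCMF:2015} in dimension two), which forces $A\leq\vol_2^s(C(\pi/4))=(2-\sqrt{2})\pi$ whenever $\diam K\leq\pi/2$. I do not foresee a genuine obstacle beyond this bookkeeping; the one place that deserves care is checking that the two exponents inside the definition of $\cE^s(C(r))$ multiply to $1$ identically, so that no analytic subtleties intervene in the limit.
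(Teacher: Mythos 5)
Your proposal is correct and follows essentially the same route as the paper: it invokes the $\S^2$ entropy inequality $\cE^s(K)\leq\cE^s(C_K)$ obtained from Corollary~\ref{cor:entropy_strong_bound} together with the Basit--Hoehner--L\'angi--Ledford result for $\Omega_1^s$, and then evaluates $\cE^s(C(\alpha))=\tan\alpha$ and $\vol_2^s(C(\alpha))=2\pi(1-\cos\alpha)$ to get the closed form and the two sufficient conditions. Your direct verification that $\bigl((\tan r)^{q/(2+q)}\bigr)^{(q+2)/q}=\tan r$ merely re-derives the formula for $\cE^s$ of a cap already recorded in Section~\ref{sec:entropy_sphere}, so nothing is genuinely different.
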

\medskip

\noindent
\textbf{Organization of the paper.} In Section 2 we recall basic notions on spherical convex bodies and recall the definition of, what we call, GHS-centers. In Section 3 we review (dual) isoperimetric inequalities for spherical convex bodies that follow from recent advances on the analog to the Aleksandrov--Fenchel inequalities in spherical space. In Section 4 and 5 we establish the isoperimetric inequality for the floating area $\Omega_1^s$ and a stability result. In Section 6 we introduce the family of $L_p$-floating areas and establish a duality formula, monotonicity and isoperimetric inequalities. In Section 7 we recall the different entropy functionals in Euclidean space, before introducing our spherical curvature entropy in Section 8. Finally, in Section 9 we discuss two real analytic extensions to space forms of constant curvature $\lambda\geq 0$ which have properties similar to the $L_p$-floating area and the spherical curvature entropy. In particular, these families connect $\Omega_p^s$, respectively $E^s$, for $\lambda\to 0^+$ in the Euclidean space, either with the $L_p$-affine surface area or a Euclidean curvature measure, respectively, with either the centro-affine entropy $E_{PW}$ or the Gaussian entropy $E_C$.

\section{Special centers of spherical convex bodies}\label{sec:GHS-center}

We denote by $\S^d:=\{\bu\in\R^{d+1}:\bu\cdot\bu=1\}$ the unit sphere in $\R^{d+1}$ where $\cdot$ is the Euclidean scalar product and we write $\|\cdot\|_2$ for the Euclidean norm. The total measure of $\S^d$ is $\omega_d:=\vol_d^s(\S^d) = 2 \pi^{\frac{d+1}{2}} / \Gamma(\frac{d+1}{2})$ and $\kappa_d:=\vol_d^e(B_2^d) = \omega_{d-1}/d = \pi^{\frac{d}{2}} / \Gamma(\frac{d}{2}+1)$. The geodesic distance $d_s(\bu,\bv)$ for $\bu,\bv\in \S^d$ is determined by $\cos d_s(\bu,\bv) =\bu\cdot\bv$.

A set $K\subset \S^d$ is a convex body, if $K$ is closed and for any two points $\bu,\bv\in K$ with $\bu\cdot \bv>0$ we have that the uniquely determined minimal geodesic  arc connecting $\bu$ and $\bv$ is also contained in $K$. In addition, $K$ is called \emph{proper} if $K$ is contained in an open half-sphere, that is, if there exist $\be\in\S^d$ such that $K\subset \interior H^+(\be) = \{\bv\in \S^d: \bv\cdot\be >0\}$.

The \emph{(spherical) dual body $K^*$} is defined by
\begin{equation*}
    K^*
    = \bigcap_{\bu\in K} \{\bv \in \S^d : \bv\cdot \bu\geq 0\}.
\end{equation*}
Notice that $\bv\in K^*$ if and only if $K\subset H^+(\bv)$ and $\bv\in \interior K^*$ if and only if $K\subset \interior H^+(\bv)$.
Thus $K$ is proper if and only if $K^*$ has non-empty interior.

We have $\bo \in \interior K$ and $K\subset \interior H^+(\bo)$ if and only if $\bo \in \interior K \cap \interior K^*$.
For any $\bo\in \interior K\cap \interior K^*$, $K$ is uniquely determined by the \emph{(spherical) radial function} $\rho_{\bo}(K,\cdot): \S^d\cap\bo^\bot\cong \S^{d-1} \to (0,\frac{\pi}{2})$, with respect to the center $\bo$, defined by
\begin{equation*}
    \rho_{\bo}(K,\bu) = \max \left\{\lambda\in(0,+\infty) : (\cos \lambda)\bo+(\sin\lambda)\bu\in K\right\} \qquad \text{for all $\bu\in\S^d\cap\bo^\bot$}.
\end{equation*}
By using radial coordinates on $\S^d$ in $\bo$, we can express the spherical volume of $K$ by
\begin{equation}
    \vol_d^s(K) = \int_{\S^{d}\cap\bo^\bot} J(\rho_{\bo}(K,\bu))\, \dint\bu,
\end{equation}
where $J(\alpha) := \int_{0}^{\alpha} (\sin t)^{d-1}\, \dint t$.

\begin{example}
        The geodesic ball $C_{\bo}(\alpha) = \{\bu\in\S^d : d_s(\bo,\bu)\leq \alpha\}$ has the dual
        \begin{equation*}
            C_{\bo}(\alpha)^* = C_{\bo}\left(\frac{\pi}{2}-\alpha\right).
        \end{equation*}
        In particular, the geodesic ball of radius $\frac{\pi}{4}$ is self-dual, i.e., $C_{\bo}(\frac{\pi}{4})^* = C_{\bo}(\frac{\pi}{4})$. 
        The spherical volume of $C_{\bo}(\alpha)$ is 
        \begin{equation}
            \vol_{d}^s(C_{\bo}(\alpha)) = \omega_{d-1} J(\alpha).
        \end{equation}
\end{example}

For $\bo\in\interior K \cap \interior K^*$ we can identify $K$ with a Euclidean convex body $\overline{K}\subset \R^{d+1}\cap\bo^\bot\cong\R^d$ in the projective model of $\S^d$ around $\bo$ via the gnomonic projection $g_{\bo} : \interior H^+(\bo)\to \R^{d+1}\cap\bo^\bot\cong\R^d$ defined by $g_{\bo}(\bu) = \frac{\bu}{\bu\cdot\bo}-\bo$. If $\bo=\be_{d+1}$, then the gnomonic projection $g=g_{\be_{d+1}}$ is given in coordinates by $g(\bu) = \frac{1}{u_{d+1}} (u_1,\dotsc,u_d)$. We refer to $\overline{K}=g_{\bo}(K)$ as \emph{projective model} (or \emph{affine chart}) of the spherical convex body $K$.

For $0\in\interior \overline{K}$, the Euclidean radial function $\rho_{\overline{K}}:\S^{d-1}\to (0,+\infty)$ of $\overline{K}$ is defined by $\rho_{\overline{K}}(\bu) = \max\{t>0:t\bx\in\overline{K}\}$. It is related to the spherical radial function by
\begin{equation*}
    \tan \rho_{\bo}(K,\bu) = \rho_{\overline{K}}(\bu),
\end{equation*}
see \cite[Lem.\ 7.2]{BS:2016}. Furthermore, the spherical dual body $K^*$ is related to the polar body $\overline{K}^\circ$ by (see \cite[Prop.\ 7.3]{BS:2016}), 
\begin{equation*}
    g_{\bo}(K^*) = -\overline{K}^\circ.
\end{equation*}

Gao, Hug \& Schneider observed that there is a uniquely determined affine chart $\overline{K}=g_{\bo}(K)$ of a proper spherical convex body $K$ such that $\overline{K}$ is centered, that is, $\overline{K}$ has the centroid at the origin. We include the short proof following the arguments in \cite{GHS:2003} for the reader's convenience.

\begin{theorem}[{Gao, Hug \& Schneider \cite[p.~171]{GHS:2003} -- GHS-center}]\label{thm:GHS}
    Let $K\subset \S^d$ be a proper spherical convex body with non-empty interior. Then there exists a uniquely determined point $\bo=\bo(K)\in \interior K\cap\interior K^*$, such that $\overline{K}:=g_{\bo}(K)$ has centroid at the origin, i.e.,
    \begin{equation}\label{eqn:cent}
        \int_{\overline{K}} (\bx\cdot\bu)\, \dint \bx = \int_{K} (\bw\cdot\bo)^{-(d+2)} (\bw\cdot\bu)\, \dint\bw = 0 \qquad \text{for all $\bu\in\S^{d}\cap\bo^\bot\cong \S^{d-1}$}.
    \end{equation}
    Moreover, $\bo^*=\bo(K^*)$ is the uniquely determined point in $\interior K\cap \interior K^*$ such that $\overline{K}=g_{\bo^*}(K)$ has Santaló point at the origin.
\end{theorem}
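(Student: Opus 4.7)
The plan is to realise the point $\bo(K)$ as the unique minimiser of the Euclidean volume functional
\[
    V(\bo) := \vol_d\bigl(g_{\bo}(K)\bigr) \qquad \text{for } \bo\in\interior K^{\ast},
\]
and to recover the two claims by reading off, respectively, the Euler--Lagrange equation of $V$ and the Euler--Lagrange equation of $V^{\ast}(\bo):=\vol_d(g_{\bo}(K^{\ast}))$. Existence and uniqueness come from strict geodesic convexity together with properness (blow-up at $\bd K^{\ast}$).

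First I would express $V$ as an integral over $K$. The Jacobian of the gnomonic projection $g_{\bo}\colon \interior H^{+}(\bo)\cap\S^d\to\R^d$ equals $(\bw\cdot\bo)^{-(d+1)}$ (this is the factor sitting in the formula displayed in \eqref{eqn:cent}, after noting $\bx\cdot\bu=(\bw\cdot\bu)/(\bw\cdot\bo)$ for $\bu\in\bo^{\perp}$), so
\[
    V(\bo)=\int_K (\bw\cdot\bo)^{-(d+1)}\,\dint\bw.
\]
Differentiating along the great-circle curve $\bo(t):=\cos(t)\bo+\sin(t)\bu$ with $\bu\in\bo^{\perp}\cap\S^d$ yields
\[
    \left.\tfrac{d}{dt}\right|_{t=0}\!V(\bo(t))=-(d+1)\int_K (\bw\cdot\bo)^{-(d+2)}(\bw\cdot\bu)\,\dint\bw,
\]
\[
    \left.\tfrac{d^{2}}{dt^{2}}\right|_{t=0}\!V(\bo(t))=(d+1)\int_K\!\bigl[(d+2)(\bw\cdot\bo)^{-(d+3)}(\bw\cdot\bu)^{2}+(\bw\cdot\bo)^{-(d+1)}\bigr]\dint\bw>0.
\]
The first identity shows that $\bo\in\interior K^{\ast}$ is a critical point of $V$ if and only if the centroid condition \eqref{eqn:cent} holds; the second shows that $V$ is strictly geodesically convex on the spherically convex open set $\interior K^{\ast}$.

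Next I would establish properness, i.e.\ $V(\bo_n)\to\infty$ whenever $\bo_n\to\bo_{\infty}\in\bd K^{\ast}$. By definition of $\bd K^{\ast}$ the hyperplane $\bo_{\infty}^{\perp}$ supports $\pos K$ at some $\bw_0\in K$ with $\bw_0\cdot\bo_{\infty}=0$, and since $\interior K\neq\emptyset$ one can pick $\bw^{\ast}\in\interior K$ with $\bw^{\ast}\cdot\bo_{\infty}>0$. The tubular cone inside $K$ around the geodesic segment from $\bw_0$ to $\bw^{\ast}$ contains, in local coordinates $\by$ for which $\bw\cdot\bo_{\infty}$ is the first coordinate $y_1$, a set of the form $\{0\leq y_{1}\leq \epsilon,\ y'\in U\}$ with $U$ open; the one-dimensional integral $\int_0^{\epsilon}y_1^{-(d+1)}\dint y_1$ already diverges, so $\int_K (\bw\cdot\bo_{\infty})^{-(d+1)}\dint\bw=\infty$, and Fatou's lemma applied to the non-negative integrand $(\bw\cdot\bo_n)^{-(d+1)}$ gives $\liminf_n V(\bo_n)=\infty$. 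Together with the strict geodesic convexity, this yields a unique minimiser $\bo=\bo(K)\in\interior K^{\ast}$. Its centroid equation forces $0\in\interior\overline{K}$ (the centroid of a convex body with non-empty interior is interior), and since $g_{\bo}^{-1}(0)=\bo$ we conclude $\bo\in\interior K\cap\interior K^{\ast}$.

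For the second statement I would apply the first part to $K^{\ast}$ to obtain a unique $\bo^{\ast}:=\bo(K^{\ast})\in\interior K^{\ast}\cap\interior K^{\ast\ast}=\interior K^{\ast}\cap\interior K$ such that $g_{\bo^{\ast}}(K^{\ast})$ is centered. The polarity identity $g_{\bo^{\ast}}(K^{\ast})=-g_{\bo^{\ast}}(K)^{\circ}$, recalled just before the theorem, then gives that $\overline{K}^{\circ}=g_{\bo^{\ast}}(K)^{\circ}$ has centroid at the origin, which by definition means that the origin is the Santaló point of $\overline{K}=g_{\bo^{\ast}}(K)$; uniqueness of such $\bo^{\ast}$ is inherited from the uniqueness in the first part. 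The main obstacle I anticipate is the properness claim in the third paragraph: the variational identities and the deduction of the Santaló statement are essentially algebraic, but showing that $V$ blows up at every boundary point of $K^{\ast}$ requires using the support structure of $K^{\ast}$ and the non-emptiness of $\interior K$ to produce a quantitative wedge on which the singular integrand $(\bw\cdot\bo)^{-(d+1)}$ is non-integrable in the limit.
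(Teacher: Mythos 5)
Your proposal is correct and follows essentially the same route as the paper: both minimise $V(\bo)=\int_K(\bw\cdot\bo)^{-(d+1)}\,\dint\bw$ over $\interior K^*$, use blow-up at $\bd K^*$ plus strict convexity along great circles for existence and uniqueness, read off the centroid condition from the vanishing first variation, and deduce the Santal\'o statement from $g_{\bo^*}(K^*)=-\overline{K}^\circ$. The only (harmless) imprecision is your description of the set near $\bw_0$ as a product $\{0\le y_1\le\epsilon\}\times U$ — a cone with apex $\bw_0$ has cross-sections of volume $\sim y_1^{d-1}$ rather than constant, but the integrand then behaves like $y_1^{-2}$ and the divergence still holds.
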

\begin{proof}
    We define $F_{K^*}:K^* \to (0,+\infty]$ by
    \begin{equation*}
        F_{K^*}(\bv) = \vol_d^e(g_{\bv}(K)) = \int_{K} [\cos d_s(\bv,\bw)]^{-(d+1)}\, \dint\bw.
    \end{equation*}
    For $\bv\in\bd K^*$ we have that $g_{\bv}(K)$ is unbounded and therefore $F_{K^*}(\bv)=+\infty$. Since $F_{K^*}$ is continuous and by compactness, there is a minimum $\bo\in \interior K^*$. Since $t\mapsto t^{-(d+1)}$ is strictly convex, the minimum $\bo$ is uniquely determined. Furthermore, the directional derivatives of $F_{K^*}$ at $\bo$ vanish, that is,
    \begin{align*}
        0 = \lim_{\varepsilon\to 0} \frac{F_{K^*}((\cos\varepsilon)\bo+(\sin\varepsilon)\bu)-F_{K^*}(\bo)}{\varepsilon} 
        &= (d+1) \int_{K} (\bw\cdot \bo)^{-(d+2)} (\bw\cdot \bu)\, \dint\bw \\
        &= (d+1) \int_{\overline{K}} (\bx\cdot \bu) \,\dint \bx,
    \end{align*}
    for all $\bu\in\S^{d}\cap \bo^\bot$.
    Thus $\overline{K}$ has the centroid at the origin, which implies that $g_{\bo}(\bo) = 0 \in \interior \overline{K} = \interior g_{\bo}(K)$, i.e., $\bo\in\interior K$.
 
 For the second statement we just note that $\overline{K}$ has the Santaló point at the origin if and only if $\overline{K}^\circ = -g_{\bo}(K^*)$ is centered, see, for example, \cite[Eq.~(10.23)]{Schneider:2014}.
\end{proof}

\begin{remark}[the GHS-center $\bo(K)$ is a $H$-barycenter of the uniform measure on $K$]\label{rmk:GHS}
    Let $K\subset \S^d$ be a proper spherical convex body. Arguing as in the proof of Theorem \ref{thm:GHS}, we observe the following:
    For $\alpha\geq 1$, the function $H_{\alpha}(t):=[\cos t]^{-\alpha}$, for $t\in[0,\frac{\pi}{2})$, is strictly convex and strictly increasing and there exists a uniquely determined point $\bo_\alpha(K)\in \interior K^*$ such that
    \begin{align*}
        \bo_\alpha(K) &= \argmin_{\bv\in \interior K^*} \int_{K} H_\alpha(d_s(\bv,\bw))\, \frac{\dint\bw}{\vol_d^s(K)}
        = \argmin_{\bv\in\interior K^*} \int_{g_{\bv}(K)} (1+\|\bx\|^2)^{\frac{\alpha - (d+1)}{2}}\, \frac{\vol_d^e(\dint\bx)}{\vol_d^s(K)}.
    \end{align*}
    We note that $\bo_\alpha(K)$ is the $H_\alpha$-barycenter of the uniform measure $\mu_K$ on $K$, i.e., $\mu_K(A):=\frac{\vol_d^s(K\cap A)}{\vol_d^s(K)}$, and uniqueness can also be determined by a general result on uniqueness of $H$-barycenters in $\mathrm{CAT(1)}$-spaces, see \cite[Thm.\ 41]{Yokota:2017}, although here we need to assume that there exists $\bw\in\S^d$ such that $K\subset C_{\bw}(\frac{\pi}{4})$. We note that there is some ambiguity about the terminology in the literature and instead of \emph{barycenter of a measure}, one may also call this notion a \emph{Fréchet mean}, \emph{Karcher mean} or \emph{(Riemannian) center or mass}.
\end{remark}

\section{Dual isoperimetric inequalities on the sphere} \label{sec:curvature_flow}

Recall that we  write $C_K(\bo)=C_{\bo}(\alpha_K)$ for the uniquely determined geodesic ball (spherical cap) in $\S^d$ center in $\bo$ with the same volume as $K$ and we just use $C_K=C_K(\bo)$ when the expression does not depend on the actual center $\bo$.  Furthermore, a convex body $K\subset \S^d$ is called of class $\cC^2_+$, if $\bd K$ is a $\cC^2$-smooth embedded hypersurface of $\S^d$ with everywhere strictly positive Gauss--Kronecker curvature $H_{d-1}^s(K,\cdot)$. It is called of class $\cC^{1,1}$ if $\bd K$ is $C^1$-smooth embedded hypersurface and the Gauss map $\bn_K:\bd K\to \S^d$ is Lipschitz continuous. We refer to \cite[Sec.\ 2.5]{Schneider:2014} for higher regularity and curvature notions on Euclidean convex bodies, which also translate directly to proper spherical convex bodies by applying the gnomonic projection.

For a proper spherical convex body $K\subset \S^d$, the classical \emph{isoperimetric inequality} (II) is
\begin{equation}\label{eqn:spherical_iso}
    P^s(K) \geq P^s(C_K),
\end{equation}
with equality if and only if $K$ is a geodesic ball, see, for example, \cite[Sec.~3.1.3]{AAGM:2015}. The isoperimetric inequality on the sphere can be traced back to Lévy \cite[p.\ 271]{Levy:1922} and E.~Schmidt \cite{Schmidt:1948} and a simple proof using spherical symmetrization was presented by Figiel, Lindenstrauss \& V.~Milman \cite[Thm.\ 2.1]{FLM:1977}.

Since the perimeter functional $P^s$ on convex bodies is increasing with respect to set inclusion, so in particular on geodesic balls, we may equivalently state \eqref{eqn:spherical_iso} in terms of the volume radii by
\begin{equation}\label{eqn:iso_rad}
    \alpha_P(K) \geq \alpha_K.
\end{equation}
where $\alpha_{P}(K)$ is uniquely determined radius such that 
\begin{equation*}
    P^s(K) = P^s(C(\alpha_{P}(K)))=\omega_{d-1} (\sin \alpha_P(K))^{d-1}.
\end{equation*}

Gao, Hug \& Schneider \cite{GHS:2003} gave an analog to the Blaschke--Santaló inequality for the dual volume. Unlike the Euclidean Blaschke--Santaló inequality, this spherical version does not depend on the choice of center. However, as they show, one can use the Euclidean Blaschke--Santaló inequality in the projective model with respect to the GHS-center to derive the spherical analog. Moreover, this spherical analog--which we call dual volume inequality (dVI)--can also equivalently be seen as a $U_1$-analog of the Urysohn inequality, where $U_1$ is another kind of volume functional that arises as expected hitting probability of $K$ with random $(d-1)$-dimensional greatspheres on $\S^d$, see, for example, \cite[Sec.\ 6.5]{SW:2008}. Note that the dual volume and $U_1$ are related by 
$\vol_d^s(K^*) = \omega_d(\frac{1}{2} - U_1(K))$.

\begin{theorem}[Dual Volume Inequality (dVI) / spherical Blaschke--Santaló Inequality \cite{GHS:2003}] \label{thm:BS}
Let $d\geq 2$ and let $K\subset \S^d$ be a proper spherical convex body. Then the following inequalities hold true and are all equivalent:
    \begin{enumerate}
        \item[(i)] $\vol_d(K^*)\leq \vol_d(C_K^*)$ 
        \item[(ii)] $\alpha_K+\alpha_{K^*}\leq \frac{\pi}{2}$
        \item[(iii)] $[\tan\alpha_K]\cdot[\tan\alpha_{K^*}]\leq 1$
    \end{enumerate}
    Equality holds if and only if $K$ is a geodesic ball.
\end{theorem}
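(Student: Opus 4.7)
The plan is to reduce the dual volume inequality to the Euclidean Blaschke--Santaló inequality applied in the affine chart at the GHS-center $\bo=\bo(K)$ given by Theorem~\ref{thm:GHS}, bridged by a single Jensen-type comparison between spherical and Euclidean volumes in radial coordinates. The equivalences $(i)\Leftrightarrow(ii)\Leftrightarrow(iii)$ are essentially bookkeeping: since $C_K^* = C_{\bo}(\frac{\pi}{2}-\alpha_K)$ and $\alpha\mapsto\vol_d^s(C_{\bo}(\alpha))$ is strictly increasing on $(0,\tfrac{\pi}{2})$, (i) reads $\alpha_{K^*}\leq \tfrac{\pi}{2}-\alpha_K$, which is (ii); and (ii)$\Leftrightarrow$(iii) follows from the strict monotonicity of $\tan$ on $(0,\tfrac{\pi}{2})$ together with $\tan(\tfrac{\pi}{2}-\alpha)=\cot\alpha$.

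For (iii), I would set $\overline{K}=g_{\bo}(K)$. By Theorem~\ref{thm:GHS}, $\overline{K}$ has centroid at the origin and $\bo\in\interior K\cap\interior K^*$, so $g_{\bo}(K^*)=-\overline{K}^\circ$ is also a bounded affine chart. The Euclidean Blaschke--Santaló inequality applied to the centered body $\overline{K}$ yields
\begin{equation*}
    \vol_d^e(\overline{K})\,\vol_d^e(\overline{K}^\circ)\leq \kappa_d^2,
\end{equation*}
with equality iff $\overline{K}$ is a centered ellipsoid. It thus suffices to establish the two lower bounds $\vol_d^e(\overline{K})\geq \kappa_d\tan^d\alpha_K$ and $\vol_d^e(g_{\bo}(K^*))\geq \kappa_d\tan^d\alpha_{K^*}$, which are precisely the Euclidean volumes of the affine charts of the geodesic caps $C_{\bo}(\alpha_K)$ and $C_{\bo}(\alpha_{K^*})$; feeding them into Blaschke--Santaló gives $\tan^d\alpha_K\tan^d\alpha_{K^*}\leq 1$ at once.

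Both lower bounds are instances of one Jensen inequality. Writing $\rho:=\rho_{\bo}(K,\cdot)$ and using $\tan\rho_{\bo}(K,\bu)=\rho_{\overline{K}}(\bu)$, the radial coordinate formulas from Section~\ref{sec:GHS-center} read
\begin{equation*}
    \vol_d^s(K) = \int_{\S^{d-1}} J(\rho(\bu))\,\dint\bu, \qquad \vol_d^e(\overline{K}) = \frac{1}{d}\int_{\S^{d-1}} \tan^d\rho(\bu)\,\dint\bu.
\end{equation*}
Setting $\psi:=\tan^d\circ J^{-1}$, a short chain-rule computation gives $\psi'(t)=d\sec^{d+1}(J^{-1}(t))$, which is strictly increasing, so $\psi$ is strictly convex on the relevant interval. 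Jensen's inequality (with respect to the uniform measure on $\S^{d-1}$) therefore yields
\begin{equation*}
    \frac{d\,\vol_d^e(\overline{K})}{\omega_{d-1}}
    = \frac{1}{\omega_{d-1}}\int_{\S^{d-1}} \psi(J(\rho(\bu)))\,\dint\bu
    \geq \psi\!\left(\frac{\vol_d^s(K)}{\omega_{d-1}}\right)
    = \psi(J(\alpha_K)) = \tan^d\alpha_K,
\end{equation*}
and the identical argument applied to $K^*$ (which requires only $\bo\in\interior K^*$, not that $K^*$ be centered) yields the analogous bound.

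The main technical point is the convexity of $\psi=\tan^d\circ J^{-1}$; once that is verified the chain of inequalities closes automatically. For the equality case, Jensen forces $\rho$ to be constant, so $K$ is a geodesic cap centered at $\bo$; then $K^*$ is also such a cap, $\overline{K}$ is a centered Euclidean ball, and equality in Blaschke--Santaló is automatic. Hence equality in any of (i)--(iii) holds iff $K$ is a geodesic ball.
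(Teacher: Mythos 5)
Your proof is correct and follows essentially the same route as the one the paper attributes to Gao, Hug \& Schneider: gnomonic projection at the GHS-center, the Euclidean Blaschke--Santaló inequality for the centered chart $\overline{K}$, and the Jensen bound $\vol_d^e(\overline{K})/\kappa_d \geq H(\vol_d^s(K)/\omega_{d-1})$ with $H=\tan^d\circ J^{-1}$ (your $\psi$) applied to both $K$ and $K^*$. This is exactly the convex function the paper invokes in Section \ref{sec:stab} and, in its $\lambda$-version, in the proof of Theorem \ref{thm:lambda_FI}, so no further comparison is needed.
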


In the following two propositions we combine recent results on Alexandrov--Fenchel type inequalities and the duality on the sphere. They are contained in the list of inequalities in \ Y.~Hu \& H.~Li \cite[Lem.\ 5.1]{HL:2023}, which we also recommend for an overview of the current state and references on the spherical Alexandrov--Fenchel type inequalities in spherical and hyperbolic space. By combing these inequalities and duality, we obtain two special inequalities that can be stated only using the perimeter and volume functionals. They are both stronger than the (dVI) in the sense that either combined with the classical isoperimetric inequality (II) implies the dual volume inequality (dVI), see the Remark \ref{rem:combine}. Also, there is seemingly no Euclidean analog for either of these two inequalities.

The first inequality, which we call dual isoperimetric inequality (dII), can be derived from the family of inequalities by C.~Chen, Guan, J.~Li \& Scheuer \cite[Prop.\ 4.4]{CGLS:2022} or by M.\ Chen \& Sun \cite[Prop.\ 3.3]{CS:2022}. They show for a proper spherical convex body of class $C_2^+$ that 
\begin{equation*}
    \cA_{k}(K) \geq \xi_{k,-1}(\cA_{-1}(K)) = \cA_k(C_K) \qquad \text{for all $0\leq k\leq d-2$},
\end{equation*}
with equality if and only if $K$ is a geodesic ball. Here $\cA_k$ are the family of spherical quermassintegrals inductively defined by
\begin{align*}
    \cA_{-1}(K) &= \vol_d^s(K),\\
    \cA_{0}(K) &= P^s(K),\\
    \cA_{1}(K) &= (d-1)\int_{\bd K} H_{1}^s(K,\bu) \, \vol_{\bd K}^s(\dint \bu) + (d-1) \vol_d^s(K),\\
    \cA_{k}(K) &= \tbinom{d-1}{k}\int_{\bd K} H_{k}^s(K,\bu) \, \vol_{\bd K}^s(\dint \bu) + \frac{d-k}{k-1} \cA_{k-2}(K), \qquad \text{for $k=2,\dotsc,d-1$.}
\end{align*}
Now for $k=d-3$ we have the duality relation
\begin{equation}\label{eqn:duality_rel_A}
    \alpha_{\cA_{d-3}}(K)+\alpha_P(K^*) = \frac{\pi}{2},
    \quad \text{or equivalently} \quad
    \cA_{d-3}(K) = (d-2) \left[\omega_{d-1}-P^s(K^*)\right].
\end{equation}
This yields our first inequality.

\begin{proposition}[Dual Isoperimetric Inequality (dII)]\label{cor:polar_iso}
    Let $d\geq 3$ and let $K\subset \S^d$ be a spherical convex body of class $\cC^2_+$. Then the following equivalent inequalities hold true:
    \begin{enumerate}
        
        \item[(i)] $P^s(K^*)\leq P^s(C_K^*)$
        
        \item[(ii)] $\alpha_{P}(K^*) + \alpha_K \leq \frac{\pi}{2}$
        
        \item[(iii)] $[\tan\alpha_K]\cdot[\tan \alpha_P(K^*)]\leq 1$
    \end{enumerate}
    Equality holds if and only if $K$ is a geodesic ball.
    
    For $d=2$ we have equality for all $K$, since $\alpha_K =
     \frac{\pi}{2}-\alpha_P(K^*)$.
\end{proposition}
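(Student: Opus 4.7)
The plan is to invoke the recent spherical Alexandrov--Fenchel inequality $\cA_{d-3}(K)\geq \cA_{d-3}(C_K)$ stated just above the proposition, which requires $d-3\geq 0$ (hence the hypothesis $d\geq 3$) and holds for convex bodies of class $\cC^2_+$, with equality if and only if $K$ is a geodesic ball. Substituting the duality identity \eqref{eqn:duality_rel_A}, $\cA_{d-3}(K)=(d-2)[\omega_{d-1}-P^s(K^*)]$, on both sides (geodesic balls are of class $\cC^2_+$, so the identity applies to $C_K$ as well), the inequality becomes
\begin{equation*}
(d-2)[\omega_{d-1}-P^s(K^*)]\geq (d-2)[\omega_{d-1}-P^s(C_K^*)],
\end{equation*}
which upon cancellation yields (i), with the equality characterisation transferred verbatim from the Alexandrov--Fenchel inequality.

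To pass between the three formulations, I use that the perimeter of a geodesic ball of volume radius $\alpha\in[0,\pi/2]$ is $\omega_{d-1}(\sin\alpha)^{d-1}$, strictly increasing in $\alpha$. Since $C_K^*=C_{\bo}(\pi/2-\alpha_K)$, this gives $P^s(C_K^*)=\omega_{d-1}(\cos\alpha_K)^{d-1}$, while $P^s(K^*)=\omega_{d-1}(\sin\alpha_P(K^*))^{d-1}$ by the definition of $\alpha_P$. Hence (i) is equivalent to $\sin\alpha_P(K^*)\leq \sin(\pi/2-\alpha_K)$, which by strict monotonicity of $\sin$ on $[0,\pi/2]$ is (ii). The equivalence (ii)$\Leftrightarrow$(iii) then follows by strict monotonicity of $\tan$ on $[0,\pi/2)$ together with $\tan(\pi/2-\alpha)=1/\tan\alpha$.

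For $d=2$, the asserted equality $\alpha_K=\pi/2-\alpha_P(K^*)$ translates, via the same trigonometric manipulation and the explicit formulas for spherical area and perimeter of geodesic discs, to the identity $\vol_2^s(K)+P^s(K^*)=2\pi$. I would derive this from the Gauss--Bonnet theorem on $\S^2$: parameterising $\bd K^*$ by the outer unit conormals along $\bd K$, the spherical arclength element on $\bd K^*$ equals the geodesic curvature of $\bd K$ times arclength, hence $\int_{\bd K}\kappa_g\,\mathrm{d} s = P^s(K^*)$, and Gauss--Bonnet gives $\vol_2^s(K)+P^s(K^*)=2\pi\chi(K)=2\pi$ (the polygonal case follows by approximation, with turning angles at vertices replacing the smooth geodesic curvature contribution).

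The entire difficulty is concentrated in the spherical Alexandrov--Fenchel inequality for $\cA_{d-3}$, whose proofs in the cited works rely on delicate inverse-curvature-flow estimates. Once that deep input is accepted, the remaining steps---rewriting via \eqref{eqn:duality_rel_A} and the elementary trigonometric equivalences---are essentially bookkeeping.
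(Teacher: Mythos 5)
Your proof is correct and follows essentially the same route as the paper: the Chen--Guan--Li--Scheuer / Chen--Sun Alexandrov--Fenchel inequality $\cA_{d-3}(K)\geq\cA_{d-3}(C_K)$ for class $\cC^2_+$ bodies (requiring $d\geq 3$), combined with the duality identity \eqref{eqn:duality_rel_A} and the elementary trigonometric equivalences between (i), (ii) and (iii). The only cosmetic difference is that for $d=2$ you derive $\vol_2^s(K)+P^s(K^*)=2\pi$ via Gauss--Bonnet, whereas the paper reads this identity off from the integral-geometric duality $U_j(K)+U_{d-j+1}(K^*)=\tfrac12$.
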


\begin{remark}
     The identity \eqref{eqn:duality_rel_A} can be concluded from the duality relations for the $U_j$-functionals $U_j(K)+U_{d-j+1}(K^*)=\frac{1}{2}$, see \cite[Eq.\ (6.51) \& (6.63)]{SW:2008}. This was also obtained in \cite[Thm.\ 1.1]{HL:2023} with a different approach for smooth convex bodies in spherical and hyperbolic spaces. To verify that $\cA_k\sim W_{k+1} \sim U_{d-1-k}$ for $k=-1,\dotsc,d-1$, one can use \cite[Prop.\ 7]{Solanes:2006}. Indeed for the $\cA_k$ functionals used in \cite{CGLS:2022,CS:2022}, the $W_k$ functionals used in \cite{HL:2023} and the $U_j$ functionals used in \cite{SW:2008}, we have
    \begin{align*}
        \cA_{-1}(K) &= W_0(K) = \vol_d^s(K) = \omega_d U_d(K),\\
        \cA_{0}(K) &= d W_1(K) = P^s(K) = \omega_0\omega_{d-1} U_{d-1}(K),\\
        \cA_{1}(K) &= d(d-1) W_2(K) = \omega_1\omega_{d-2} U_{d-2}(K)
    \end{align*}
    and the recursive identities
    \begin{align*}
        \tbinom{d-1}{k} \int_{\bd K} H_{d-1-k}^s(K,\bu) \vol_{\bd K}^s(\dint\bu)
        &=\left[\cA_k(K)-\frac{d-k}{k-1} \cA_{k-2}(K)\right] \\
        &= d\tbinom{d-1}{k} \left[W_{k+1}(K)-\frac{k}{d+1-k} W_{k-1}(K)\right]\\
        &= \omega_{k} \omega_{d-1-k} \left[U_{d-1-k}(K) - U_{d+1-k}(K)\right],
    \end{align*}
    for $k=2,\dotsc,d-1$. One can verify inductively that
    \begin{equation}
        \cA_{k}(K) = d\tbinom{d-1}{k} W_{k+1}(K) = \omega_{k}\omega_{d-1-k}\,U_{d-1-k}(K).
    \end{equation}
    This and the duality relation now yields
    \begin{equation*}
        \cA_{d-3}(K) 
        = \omega_{2}\omega_{d-3} U_2(K) 
        = \omega_{2} \omega_{d-3}\left[\frac{1}{2} - U_{d-1}(K^*)\right]
        = (d-2) \left[\omega_{d-1}-P^s(K^*)\right].
    \end{equation*}
    Thus \eqref{eqn:duality_rel_A} is verified for $d\geq 3$.
\end{remark}

The second inequality follows from combining the main results of M.\ Chen \& Sun \cite[Thm.\ 1.2]{CS:2022} with duality. They show for a proper spherical convex body of class $C_2^+$ that 
\begin{equation*}
    \cA_{2\ell}(K) \geq \xi_{2\ell,0}(\cA_0(K)) = \cA_{2\ell}(C(\alpha_P(K))) \qquad \text{for all $1\leq \ell\leq \left\lfloor\frac{d-2}{2}\right\rfloor$},
\end{equation*}
with equality if and only if $K$ is a geodesic ball.
Now for $d>3$ odd, we find our second inequality by using the case $\ell=\frac{d-3}{2}$ and \eqref{eqn:duality_rel_A}.

\begin{proposition}[Dual Perimeter Inequality (dPI)]\label{cor:ppi}
    Let $d> 3$ odd and let $K\subset \S^d$ be a proper spherical convex body of class $\cC^2_+$. Then the following equivalent inequalities hold true:
    \begin{enumerate}
        
        \item[(i)] $P^s(K^*) \leq P^s(C(\alpha_P(K))^*)$
        
        \item[(ii)] $\alpha_P(K)+\alpha_P(K^*)\leq \frac{\pi}{2}$
        
        \item[(iii)] $[\tan\alpha_P(K)]\cdot[\tan\alpha_P(K^*)]\leq 1$
    \end{enumerate}
    Equality holds if and only if $K$ is a geodesic ball.
    
    For $d=3$ we have equality for all $K$, since $\alpha_P(K)=
    \frac{\pi}{2}-\alpha_P(K^*)$. For $d=2$ the inequality is reversed and equivalent to the isoperimetric inequality (II), since $\alpha_K = \frac{\pi}{2}-\alpha_P(K^*)$.
\end{proposition}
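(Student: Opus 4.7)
The plan is to mirror the proof of Proposition \ref{cor:polar_iso}, using the (even-indexed) Alexandrov--Fenchel-type inequality of M.~Chen \& Sun \cite{CS:2022} in place of that of Chen--Guan--Li--Scheuer, and invoking the duality relation \eqref{eqn:duality_rel_A} at the level $k=d-3$.

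First I would establish the equivalence of (i), (ii), (iii) by elementary trigonometry. Using $C(\alpha)^* = C(\tfrac{\pi}{2}-\alpha)$ and $P^s(C(\alpha))=\omega_{d-1}(\sin\alpha)^{d-1}$, inequality (i) reads $(\sin\alpha_P(K^*))^{d-1}\leq (\cos\alpha_P(K))^{d-1}$, which is equivalent to (ii) by monotonicity of the $(d-1)$-th root and of $\sin$ on $[0,\tfrac{\pi}{2}]$; the equivalence (ii) $\Leftrightarrow$ (iii) follows from $\tan(\tfrac{\pi}{2}-\alpha)=\cot\alpha$.

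For the main case $d>3$ odd, I would set $\ell=(d-3)/2$. Since $d\geq 5$ is odd, $\ell$ is a positive integer coinciding with $\lfloor(d-2)/2\rfloor$, so the Chen--Sun inequality applies to give
\begin{equation*}
    \cA_{d-3}(K) \geq \cA_{d-3}(C(\alpha_P(K))),
\end{equation*}
with equality if and only if $K$ is a geodesic ball. Substituting \eqref{eqn:duality_rel_A}, namely $\cA_{d-3}(K) = (d-2)[\omega_{d-1}-P^s(K^*)]$ and likewise for the comparison cap, immediately translates this into (i).

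The boundary cases fall out of the same duality identity. For $d=3$, \eqref{eqn:duality_rel_A} reduces (since $\cA_0=P^s$) to the unconditional identity $P^s(K)+P^s(K^*)=\omega_2$, which forces $(\sin\alpha_P(K))^2+(\sin\alpha_P(K^*))^2=1$ and hence $\alpha_P(K)+\alpha_P(K^*)=\tfrac{\pi}{2}$ for every proper $K$. For $d=2$ I would instead invoke the spherical Gauss--Bonnet formula $\vol_2^s(K)+P^s(K^*)=2\pi$; comparing with the cap $C_K$ of the same volume gives $P^s(K^*)=P^s(C_K^*)$, equivalently $\alpha_P(K^*)=\tfrac{\pi}{2}-\alpha_K$, and the classical spherical isoperimetric inequality $\alpha_P(K)\geq \alpha_K$ then produces the reversed form of (ii).

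The only genuine obstacle is conceptual: the deep input, Chen--Sun's Alexandrov--Fenchel inequality for the even quermassintegrals, is cited as a black box. The remaining work is careful bookkeeping to verify that for $d>3$ odd the index $k=d-3$ demanded by \eqref{eqn:duality_rel_A} lands on an even integer in the admissible range of the Chen--Sun family---consistent with the proposition restricting to odd dimension, since for even $d$ this parity matching fails.
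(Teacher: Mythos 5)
Your proposal is correct and follows essentially the same route as the paper: the main case $d>3$ odd is obtained from the Chen--Sun inequality $\cA_{2\ell}(K)\geq\cA_{2\ell}(C(\alpha_P(K)))$ at $\ell=(d-3)/2$ combined with the duality identity \eqref{eqn:duality_rel_A}, and the boundary cases $d=3$ and $d=2$ are handled by the same duality identities ($P^s(K)+P^s(K^*)=\omega_2$ on $\S^3$ and $\vol_2^s(K)+P^s(K^*)=2\pi$ on $\S^2$) together with the classical isoperimetric inequality. The equivalence of (i)--(iii) and the parity bookkeeping for the index $k=d-3$ are exactly as in the paper.
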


\begin{remark}
It is apparently still an open conjecture that Proposition \ref{cor:ppi} holds true also for all even $d\geq 4$, but the statement is equivalent to the Alexandrov--Fenchel type inequality $\cA_{d-3}(K)\geq \xi_{d-3,0}(\cA_0(K)) = \cA_{d-3}(C(\alpha_P(K)))$.
\end{remark}

\begin{remark}[dII and dPI on $\S^2$ and $\S^3$]
    For $d=2$ we have $\alpha_K + \alpha_P(K^*) = \frac{\pi}{2}$, or equivalently, $\vol_2^s(K) = 2\pi - P^s(K^*)$. Thus the dual isoperimetric inequality (dII) gives an equality for all spherical convex bodies on $\S^2$. Moreover, we have
    \begin{equation*}
        \alpha_K+\alpha_{K^*} = \pi - (\alpha_P(K)+\alpha_P(K^*)) \leq \frac{\pi}{2},
    \end{equation*}
    which yields $\alpha_P(K)+\alpha_P(K^*)\geq \frac{\pi}{2}$.
    
    For $d=3$ we have that $\alpha_P(K)+\alpha_P(K^*)=\frac{\pi}{2}$, or equivalently, $P^s(K^*)=4\pi-P^s(K)$ and therefore the dual isoperimetric inequality (dII) is equivalent to the isoperimetric inequality (II) on $\S^3$. 
\end{remark}

\begin{remark}[II + dII implies dVI and II + dPI implies dII] \label{rem:combine}
The isoperimetric inequality (II)
together with the dual isoperimetric inequality (dII) imply the dual volume inequality (dVI) by
\begin{equation*}
    \alpha_{K^*} \overset{\text{(II)}}\leq \alpha_P(K^*) \overset{\text{(dII)}}\leq \frac{\pi}{2}-\alpha_K,
    \quad \text{or equivalently,}\quad
    P^s(C_{K^*}) \overset{\text{(II)}}{\leq} P^s(K^*) \overset{\text{(dII)}}{\leq} P^s(C_K^*).
\end{equation*}

Moreover, for odd $d\geq 3$ we have
\begin{equation*}
    \alpha_P(K^*)\overset{\text{(dPI)}}\leq \frac{\pi}{2} - \alpha_P(K) \overset{\text{(II)}}\leq \frac{\pi}{2}-\alpha_{K},
    \quad \text{or equivalently,}\quad
    P^s(K^*) \overset{\text{(dPI)}}\leq P^s(C(\alpha_P(K)^*)) \overset{\text{(II)}}\leq P^s(C_K^*).
\end{equation*}
Hence the isoperimetric inequality (II) in conjunction with the dual perimeter inequality (dPI) implies the dual isoperimetric inequality (dII).
\end{remark}

\section{Isoperimetric inequalities for the spherical floating area}

For a convex body $K$ in $\mathbb{R}^d$ the classical Blaschke affine surface area \cite{Blaschke:1923, SW:1990} is defined as 
\begin{equation}\label{classical-asa}
    \as_1(K) =  \int_{\bd K} H_{d-1}^e(K,\bu)^{\frac{1}{d+1}} \, \vol_{\bd K}^e(\dint\bu),
\end{equation}
where $H^e_{d-1}(K,\cdot)$ is the generalized  Gauss--Kronecker curvature of $\bd K$ and $\vol_{\bd K}^e$ is the surface area measure of $\bd K$.
The spherical floating area $\Omega^s_1(K)$ of a spherical convex body $K$ was introduced in \cite{BW:2016,BW:2018} as a natural spherical analog to Blaschke's affine surface area. Like the classical Euclidean affine surface area, for which this was shown in  \cite{SW:1990}, the floating area arises in the volume derivative of the spherical floating body, see \cite[Thm.\ 2.1]{BW:2016} and can be expressed as a curvature measure, similar to  (\ref{classical-asa}) via
\begin{equation*}
    \Omega^s_1(K) = \int_{\bd K} H_{d-1}^s(K,\bu)^{\frac{1}{d+1}} \, \vol_{\bd K}^s(\dint\bu),
\end{equation*}
where $H^s_{d-1}(K,\cdot)$ is the generalized spherical Gauss--Kronecker curvature of $\bd K$ with respect to the outer unit normal vector $\bn_K$ on $\S^d$ and $\vol_{\bd K}^s$ is the surface area measure of $\bd K$, that is, the $(d-1)$-dimensional Hausdorff measure restricted to $\bd K$.
For a geodesic ball $C_{\bo}(\alpha) = \{\bv \in \S^d: d_s(\bo,\bv)\leq \alpha\}$ of radius $\alpha$ and center $\bo\in\S^d$ we have
\begin{equation}\label{eqn:floating_area_ball}
    \Omega^s_1(C_\bo(\alpha)) = \omega_{d-1} (\cos\alpha)^{\frac{d-1}{d+1}} (\sin\alpha)^{\frac{d(d-1)}{d+1}}.
\end{equation}

The affine isoperimetric inequality in $\mathbb{R}^d$ states that 
\begin{equation}\label{asa-inequality}
    \as_1(K) \leq d \vol_d^e(B_d^2)^{\frac{2}{d+1}} \vol_d^e(K)^{\frac{d-1}{d+1}} = \as_1(B_K),
\end{equation}
where $B^d_2$ is the centered Euclidean unit ball in $\mathbb{R}^d$ and $B_K$ is the center ball of the same volume as $K$. 
Equality holds if and only if $K$ is an ellipsoid.
It is natural to ask is the corresponding inequality holds for the spherical floating area $\Omega^s_1$. This is established in the next theorem.

\begin{theorem}[Spherical Floating Area Inequality]\label{thm:sfa-ineq}
    Let $d\geq 3$ and $K\subset \S^d$ be a spherical convex body and let $\bo=\bo(K)$ be the GHS-center.
    If $\overline{K}:=g_{\bo}(K)\subset \sqrt{d(d-2)}B_2^d$, then
    \begin{equation*}
        \Omega^s_1(K) \leq \Omega^s_1(C_K),
    \end{equation*}
    with equality if and only if $K$ is a geodesic ball.
\end{theorem}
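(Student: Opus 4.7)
The plan is to pass to the projective (gnomonic) model of $K$ centered at the GHS-center $\bo=\bo(K)$. By Theorem~\ref{thm:GHS}, the Euclidean body $\overline{K}:=g_{\bo}(K)\subset\R^{d}$ has its Euclidean centroid at the origin, and the containment hypothesis translates to $\overline{K}\subset\sqrt{d(d-2)}\,B_{2}^{d}$. The first step is to derive the projective-model representation
\begin{equation*}
    \Omega_{1}^{s}(K) = \int_{\bd\overline{K}} H_{d-1}^{e}(\overline{K},\bx)^{1/(d+1)} (1+|\bx|^{2})^{-(d-1)/2}\, d\vol^{e}_{\bd\overline{K}}(\bx),
\end{equation*}
which follows by combining the transformation rule $d\vol^{s}_{\bd K}=\sqrt{1+(\bx\cdot\nu)^{2}}\,(1+|\bx|^{2})^{-d/2}\,d\vol^{e}_{\bd\overline{K}}$ for the boundary measure under the inverse gnomonic projection with the corresponding relation $H_{d-1}^{s}(K,\bu) = \bigl((1+|\bx|^{2})/(1+(\bx\cdot\nu)^{2})\bigr)^{(d+1)/2} H_{d-1}^{e}(\overline{K},\bx)$ for the Gauss--Kronecker curvatures; direct evaluation on a Euclidean ball $rB_{2}^{d}$ recovers \eqref{eqn:floating_area_ball}.

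Second, I would apply H\"older's inequality with exponents $d+1$ and $(d+1)/d$, factoring the integrand as $(H^{e})^{1/(d+1)}(1+|\bx|^{2})^{-(d-1)/2}= \bigl[(H^{e})^{1/(d+1)} h_{\overline{K}}^{-d/(d+1)}\bigr] \cdot \bigl[h_{\overline{K}}^{d/(d+1)}(1+|\bx|^{2})^{-(d-1)/2}\bigr]$, where $h_{\overline{K}}(\bx)=\bx\cdot\nu(\bx)$. Combined with the polar-volume identity $\int_{\bd\overline{K}} H^{e} h_{\overline{K}}^{-d}\,d\vol^{e}_{\bd\overline{K}} = d\vol_{d}(\overline{K}^{\circ})$ and the Euclidean Blaschke--Santal\'o inequality $\vol_{d}(\overline{K}^{\circ})\leq \kappa_{d}^{2}/\vol_{d}(\overline{K})$, valid since $\overline{K}$ is centered, this yields
\begin{equation*}
    \Omega_{1}^{s}(K) \leq (\omega_{d-1}\kappa_{d})^{1/(d+1)}\vol_{d}(\overline{K})^{-1/(d+1)} B(\overline{K})^{d/(d+1)},
\end{equation*}
where $B(\overline{K}):=\int_{\bd\overline{K}} h_{\overline{K}}(\bx)(1+|\bx|^{2})^{-(d-1)(d+1)/(2d)}\,d\vol^{e}_{\bd\overline{K}}$. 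Both inequalities become equalities when $\overline{K}$ is a centered Euclidean ball, and one verifies by direct computation that in that case the right-hand side equals $\Omega_{1}^{s}(C_{K})$.

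The final step is a rearrangement estimate: under the containment $\overline{K}\subset\sqrt{d(d-2)}\,B_{2}^{d}$, the functional $\Phi(\overline{K}):=\vol_{d}(\overline{K})^{-1/(d+1)}B(\overline{K})^{d/(d+1)}$ is maximized, among centered convex bodies with prescribed spherical volume $\vol_{d}^{s}(K)=v$, by the Euclidean ball $(\tan\alpha)B_{2}^{d}$ with $\vol_{d}^{s}(C_{\bo}(\alpha)) = v$. The critical threshold $\sqrt{d(d-2)}$, equivalently $\cos\alpha_{d}=1/(d-1)$, is the radius at which the one-parameter function $r\mapsto\Phi(rB_{2}^{d})$, viewed as a function of spherical volume, changes monotonicity. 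Tracing equality through the H\"older splitting, the Euclidean Blaschke--Santal\'o inequality, and this rearrangement forces $\overline{K}$ to be a centered Euclidean ball, equivalently $K = C_{\bo}(\alpha_{K})$.

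The main obstacle lies in this last rearrangement. The weight $(1+|\bx|^{2})^{-(d-1)(d+1)/(2d)}$ in $B(\overline{K})$ couples the Euclidean boundary integral to spherical volume, so neither the Euclidean affine isoperimetric inequality nor the spherical Blaschke--Santal\'o inequality (Theorem~\ref{thm:BS}) controls $\Phi$ on its own. The containment hypothesis is precisely what reduces the problem to a tractable single-variable monotonicity comparison, explaining why the strategy outlined here cannot easily remove this hypothesis, even though the authors conjecture that the inequality should hold unconditionally.
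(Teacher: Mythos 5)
Your reduction follows the paper's own proof exactly up to the decisive step: the passage to the projective model, the H\"older splitting with exponents $d+1$ and $(d+1)/d$, the polar--volume identity $\int_{\bd\overline{K}}H^{e}h_{\overline{K}}^{-d}\,\dint\vol^{e}\leq d\vol_d^e(\overline{K}^{\circ})$, and the Euclidean Blaschke--Santal\'o inequality at the GHS-center are all precisely what the paper does (in the more general $\lambda$-form of Theorem~\ref{thm:lambda_FI}). The problem is the final step, which is the actual content of the theorem: you assert, as a ``rearrangement estimate,'' that $\Phi(\overline{K})=\vol_d^e(\overline{K})^{-1/(d+1)}B(\overline{K})^{d/(d+1)}$ is maximized among centered bodies of prescribed spherical volume by the ball, but you give no argument for this --- it is essentially a restatement of what remains to be proved, and you yourself flag it as ``the main obstacle.'' The paper closes this gap with two separate Jensen inequalities in the variable $t=J(\arctan\rho_{\overline{K}}(\bu))$, where $J(\alpha)=\int_0^\alpha(\sin s)^{d-1}\dint s$: the function $H(t)=(\tan J^{-1}(t))^d$ is convex, which gives $\vol_d^e(\overline{K})\geq\kappa_d(\tan\alpha_K)^d$ (lower bound on the first factor, no containment needed), while $G(t)=(\sin J^{-1}(t))^d(\cos J^{-1}(t))^{-1/d}$, whose average over $\S^{d-1}$ is exactly $B(\overline{K})/\omega_{d-1}$ after the radial substitution, is concave precisely on the range where $\tan J^{-1}(t)\leq\sqrt{d(d-2)}$, which gives the upper bound on the second factor and is where the containment hypothesis enters.

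Your stated explanation of the threshold is also incorrect: $\sqrt{d(d-2)}$ is \emph{not} the radius at which $r\mapsto\Phi(rB_2^d)$ changes monotonicity. A direct computation with $r=\tan\alpha$ gives $\Phi(rB_2^d)\propto(\sin\alpha)^{d(d-1)/(d+1)}(\cos\alpha)^{(d-1)/(d+1)}=\Omega_1^s(C(\alpha))/\omega_{d-1}$ up to constants, which changes monotonicity at $\tan\alpha=\sqrt{d}$, not $\sqrt{d(d-2)}$ (these coincide only for $d=3$). The threshold instead comes from the sign of $G''$, i.e., from the pointwise concavity of $G$ needed for Jensen, which is why the hypothesis is a containment condition on $\rho_{\overline{K}}(\bu)$ for every direction $\bu$ rather than a constraint on the volume radius alone. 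To complete your argument you would need to supply the reparametrization by $J$ and the convexity/concavity computations for $H$ and $G$ (or an equivalent substitute); without them the proof is not complete.
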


If we fix a center $\be\in\S^d$ such that $K\subset  \interior H^+(\be)$, then we can express the spherical floating area of $K$ in terms of the projective model $\overline{K}:=g_{\be}(K)$ by
\begin{equation}\label{eqn:floating_area_projected}
    \Omega^s_1(K) = \Omega^s_1(\overline{K}) 
    = \int_{\bd \overline{K}} H_{d-1}^e(\overline{K},\bx)^{\frac{1}{d+1}} (1+\|\bx\|_2^2)^{-\frac{d-1}{2}}\, \vol_{\bd \overline{K}}^e(\dint\bx),
\end{equation}
see \cite[Eq.~(4.13)]{BW:2018}, where $H_{d-1}^e$ is the usual Euclidean Gauss--Kronecker curvature in $\R^d$ and $\vol_{\bd \overline{K}}^e$ denotes the Euclidean surface area measure of $\bd \overline{K}$. 

More generally, we denote by 
$\Sp^d(\lambda)$ the real space form of dimension $d$ and curvature $\lambda$. We can identify $\Sp^d(\lambda)$ with either a Euclidean sphere in $\R^{d+1}$ if $\lambda>0$, with $\R^d$ if $\lambda=0$, or with a hyperboloid in $\R^{d+1}_1$ if $\lambda<0$. Here we will only consider the cases $\lambda \geq 0$.
Then we have
 \begin{equation}\label{spaceforms}
    \Sp^d(\lambda) \cong \begin{cases}
                             \S^d(\lambda) := \{\bx\in \R^{d+1}: \bx\cdot \bx = 1/\lambda\}
                                 &\text{if $\lambda>0$},\\
                             \R^{d} &\text{if $\lambda=0$}.
                         \end{cases}
 \end{equation}
 A convex body $K\subset \Sp^d(\lambda)$ is a compact geodesically convex subset.

 Thus the $\lambda$-floating area $\Omega^\lambda(K)$ of a proper convex body $K\subset \Sp^d(\lambda)$ in a space form of constant curvature $\lambda$ can be expressed via the projective model $\overline{K}\subset \R^d$ by
\begin{equation}\label{eqn:lambda_floating_area_projected}
    \Omega^\lambda(K) = \Omega^\lambda(\overline{K}) = \int_{\bd \overline{K}} H_{d-1}^e(\overline{K},\bx)^{\frac{1}{d+1}} (1+\lambda\|\bx\|_2^2)^{-\frac{d-1}{2}}\, \vol_{\bd \overline{K}}^e(\dint\bx),
\end{equation}
see \cite[Eq.~(4.13)]{BW:2018}. In particular, let $C_\bo(\alpha)= \{\bv \in \Sp^d(\lambda): d_\lambda(\bo,\bv)\leq \alpha\}$ be a geodesic ball in $\Sp^d(\lambda)$.
Here
$d_{\lambda}(\bo,\bv)$ denotes  the minimal geodesic distance in $\Sp^d(\lambda)$ of $\bo$ to $\bv$. Then
\begin{equation}\label{eqn:FA_ball_lambda}
    \Omega^\lambda(C_\bo(\alpha)) = \omega_{d-1} (\cos_\lambda \alpha)^{\frac{d-1}{d+1}} (\sin_\lambda\alpha)^{\frac{d(d-1)}{d+1}},
\end{equation}
where for $\lambda >0$ we set
\begin{align}\label{eqn:lambda_sin}
    \cos_\lambda t &:= \cos\sqrt{\lambda} t, & 
    \sin_\lambda t &:= \frac{\sin \sqrt{\lambda} t}{\sqrt{\lambda}}, &
    \tan_\lambda t &:= \frac{\sin_\lambda t}{\cos_\lambda t} = \frac{1}{\sqrt{\lambda}} \, \frac{\sin \sqrt{\lambda} t}{\cos\sqrt{\lambda} t}.
\end{align}
Note that $(\cos_\lambda t)^2+\lambda (\sin_\lambda t)^2 = 1$ and
\begin{align*}
    (\cos_\lambda t)' &= -\lambda \sin_\lambda t, &
    (\sin_\lambda t)' &= \cos_\lambda t, &
    (\tan_\lambda t)' &= 1+\lambda(\tan_\lambda t)^2 = \frac{1}{(\cos_\lambda t)^2}.
\end{align*}

We prove Theorem \ref{thm:sfa-ineq} in a more general version by considering a real space form $\Sp^d(\lambda)$ of constant curvature $\lambda>0$.

\begin{theorem}[$\lambda$-Floating Area Inequality] \label{thm:lambda_FI}
    Let $d\geq 3$, $\lambda>0$ and $K\subset \Sp^d(\lambda)$ be a convex body and let $\bo=\bo(K)$ be the GHS-center.
    If for the projective model $\overline{K}:=g_{\bo}^\lambda(K)$ we have $\overline{K} \subset \sqrt{\frac{d(d-2)}{\lambda}}B_2^d$, then 
    \begin{equation*}
        \Omega^\lambda_1(K) \leq \Omega^\lambda_1(C_K^\lambda),
    \end{equation*}
    with equality if and only if $K$ is a geodesic ball.
\end{theorem}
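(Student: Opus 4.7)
The plan is to transfer the problem to the Euclidean projective model $\overline{K} := g_\bo^\lambda(K) \subset \R^d$ via the gnomonic projection based at the GHS-center $\bo = \bo(K)$. By Theorem~\ref{thm:GHS} this chart has centroid at the origin, and formula~\eqref{eqn:lambda_floating_area_projected} expresses $\Omega^\lambda(K)$ as the Euclidean affine surface area integrand weighted by $(1+\lambda\|\bx\|^2)^{-(d-1)/2}$. The centroid property is essential, because it will permit invoking the classical Blaschke--Santal\'o inequality for $\overline{K}$ in $\R^d$.

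The first step is to apply Jensen's inequality to the concave function $t \mapsto t^{1/(d+1)}$ against the cone volume measure $\dint V_{\overline{K}}(\bx) = \tfrac{1}{d}(\bx\cdot\bn_{\overline{K}}(\bx))\,\vol_{\bd\overline{K}}^e(\dint\bx)$ on $\bd \overline{K}$, whose total mass is $\vol_d(\overline{K})$. Decomposing the Euclidean Gauss--Kronecker curvature as $H_{d-1}^e(\overline{K},\bx)^{1/(d+1)} = \kappa_0(\overline{K},\bx)^{1/(d+1)}(\bx\cdot\bn_{\overline{K}})$ with $\kappa_0 = H_{d-1}^e/(\bx\cdot\bn)^{d+1}$ and passing to the Gauss map parametrization yields
\begin{equation*}
    \Omega^\lambda(K)^{d+1} \leq \bigl(d\,\vol_d(\overline{K})\bigr)^d \int_{\S^{d-1}} h_{\overline{K}}(\bu)^{-d}\bigl(1+\lambda\|\nabla h_{\overline{K}}(\bu)\|^2\bigr)^{-(d^2-1)/2}\,\dint \bu,
\end{equation*}
with equality exactly when the weighted centro-affine curvature $\kappa_0(\overline{K},\cdot)\,(1+\lambda\|\cdot\|^2)^{-(d^2-1)/2}$ is constant on $\bd\overline{K}$; this happens when $\overline{K}$ is a Euclidean ball centered at the origin, in which case both sides equal $\Omega^\lambda(C_K^\lambda)^{d+1}$.

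The proof then reduces to showing that the right-hand side above, regarded as a functional of $\overline{K}$, is maximized by the ball $\tan_\lambda(\alpha_K)\,B_2^d$ under the $\lambda$-volume matching $\vol_d^\lambda(K) = \vol_d^\lambda(C_K^\lambda)$ together with the centroid condition. Introducing geodesic polar coordinates $\|\bx\| = \tan_\lambda \rho$, so that $1+\lambda\|\bx\|^2 = \cos_\lambda^{-2}\rho$, both the $\lambda$-volume and the weighted polar integral can be written as one-dimensional moments of the spherical radial function $\rho_\bo(K,\cdot)$. The desired one-dimensional comparison is obtained by combining the Euclidean Blaschke--Santal\'o inequality applied to the centered body $\overline{K}$ with a Jensen-type argument based on the geodesic volume density $t \mapsto (\sin_\lambda t)^{d-1}$. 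The containment assumption $\overline{K} \subset \sqrt{d(d-2)/\lambda}\,B_2^d$ enters precisely at this step: it is the range in which the relevant composite function of $\|\bx\|$ remains in its concavity regime, so that the radial comparison proceeds without change of sign.

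The principal obstacle is this last one-dimensional comparison. The threshold $\sqrt{d(d-2)/\lambda}$ is an artefact of a sign condition in this strategy rather than of the inequality itself, which the authors expect to hold for all proper $K$. The equality case is extracted by combining the equality cases in Jensen (constant weighted centro-affine curvature) and in Blaschke--Santal\'o (centered ellipsoid), which together force $\overline{K}$ to be a Euclidean ball centered at the origin, i.e., $K$ to be a geodesic ball.
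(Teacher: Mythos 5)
Your overall skeleton matches the paper's: project via the gnomonic map at the GHS-center so that $\overline{K}$ is centred, split the integrand with a H\"older/Jensen step, invoke the Euclidean Blaschke--Santal\'o inequality on the centred chart, and handle a remaining radial integral by a Jensen argument whose concavity window produces the threshold $\sqrt{d(d-2)/\lambda}$. However, the way you distribute the weight $(1+\lambda\|\bx\|_2^2)^{-(d-1)/2}$ breaks the argument. You attach it to the curvature factor, so that after the Gauss-map change of variables your second-stage integral is
\begin{equation*}
\int_{\S^{d-1}} h_{\overline{K}}(\bu)^{-d}\bigl(1+\lambda\|\nabla h_{\overline{K}}(\bu)\|^2\bigr)^{-\frac{d^2-1}{2}}\,\dint\bu .
\end{equation*}
This is a mixed quantity: $h_{\overline{K}}(\bu)^{-d}=\rho_{\overline{K}^\circ}(\bu)^d$ lives on the polar body, while $\|\nabla h_{\overline{K}}(\bu)\|=\|\bx\|_2$ is the norm of the boundary point of $\overline{K}$ with outer normal $\bu$. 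It is \emph{not} a one-dimensional moment of $\rho_{\bo}(K,\cdot)$ (nor of $\rho_{\overline{K}^\circ}$), so the reduction you announce in your third paragraph does not exist, and the Blaschke--Santal\'o inequality, which controls only $\int_{\S^{d-1}} h_{\overline{K}}(\bu)^{-d}\,\dint\bu=d\vol_d^e(\overline{K}^\circ)$, gives no handle on the extra weight. Discarding the weight (it is $\leq 1$) destroys the $\lambda$-structure and cannot yield $\Omega_1^\lambda(C_K^\lambda)$ on the right-hand side.

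The paper's H\"older split is chosen the other way around: the curvature factor is kept clean, namely $\int_{\bd\overline{K}} H_{d-1}^e(\overline{K},\bx)\,(\bx\cdot\bn_{\overline{K}}(\bx))^{-d}\,\vol^e_{\bd\overline{K}}(\dint\bx)\leq d\vol_d^e(\overline{K}^\circ)$, which Blaschke--Santal\'o combined with the convexity of $H_\lambda(t)=(\tan_\lambda J_\lambda^{-1}(t))^d$ bounds by $\omega_{d-1}(\tan_\lambda\alpha_K^\lambda)^{-d}$; the weight is attached instead to the cone-volume factor, which under $\bx=\rho_{\overline{K}}(\bu)\bu$ becomes the genuine radial moment $\int_{\S^{d-1}}\rho_{\overline{K}}(\bu)^d(1+\lambda\rho_{\overline{K}}(\bu)^2)^{-\frac{d^2-1}{2d}}\,\dint\bu$, and concavity of $G_\lambda(t)=[\sin_\lambda J_\lambda^{-1}(t)]^d[\cos_\lambda J_\lambda^{-1}(t)]^{-1/d}$ on the range guaranteed by the containment hypothesis finishes the proof. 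To repair your argument you would have to redo the H\"older step with the weight on the cone-volume side; as written, your second stage is a genuine gap.
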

\begin{proof}
    We have, by \eqref{eqn:lambda_floating_area_projected} and Hölder's inequality,
    \begin{align}\notag
        \Omega^\lambda_1(K) 
            &= \int_{\bd \overline{K}} \left(\frac{H_{d-1}^e(\overline{K},\bx)}{(\bx\cdot \bn_{\overline{K}}(\bx))^d}\right)^{\frac{1}{d+1}} \frac{(\bx\cdot \bn_{\overline{K}}(\bx))^{\frac{d}{d+1}}}{(1+\lambda\|\bx\|^2)^{\frac{d-1}{2}}}\, \vol_{\bd \overline{K}}^{e}(\dint \bx)\\ \notag
        &\leq \left(\int_{\bd \overline{K}} \frac{H_{d-1}^e(\overline{K},\bx)}{(\bx\cdot \bn_{\overline{K}}(\bx))^d} 
        \vol_{\bd \overline{K}}^e(\dint \bx)\right)^{\frac{1}{d+1}} 
            \left(\int_{\bd \overline{K}} \frac{\bx\cdot \bn_{\overline{K}}(\bx)}{(1+\lambda\|\bx\|^2)^{\frac{d^2-1}{2d}}} 
                \,\vol_{\bd \overline{K}}^e(\dint\bx)\right)^{\frac{d}{d+1}}\\\label{eqn:proof_iso_ineq}
        &\leq \left(d\vol_d^e(\overline{K}^\circ)\right)^{\frac{1}{d+1}} \left(\int_{\S^{d-1}} \frac{\rho_{\overline{K}}(\bu)^d}{(1+\lambda\rho_{\overline{K}}(\bu)^2)^{\frac{d^2-1}{2d}}}\,\dint\bu\right)^{\frac{d}{d+1}}.
    \end{align}
     In the final inequality we used the transformation $\bx=\rho_{\overline{K}}(\bu)\bu$ for the second factor, see for example \cite[Lem.\ 3.1]{Hug2:1996}. For the first factor we use the transformation $\bu=\bn_{\overline{K}}(\bx)$.  We can restrict the domain of integration to normal boundary points $\bx\in\bd \overline{K}$ with $H^e_{d-1}(\overline{K},\bx)>0$, see \cite[Sec.~2]{Hug1:1996}, to obtain the inequality
     \begin{equation*}
         \int_{\bd \overline{K}} \frac{H_{d-1}^e(\overline{K},\bx)}{(\bx\cdot \bn_{\overline{K}}(\bx))^d} \vol_{\bd \overline{K}}^e(\dint \bx)
         \leq \int_{\S^{d-1}} [h_{\overline{K}}(\bu)]^{-d}\,\dint\bu 
         = \int_{\S^{d-1}} [\rho_{\overline{K}^\circ}(\bu)]^{d}\, \dint\bu = d\vol_{d}^e(\overline{K}^\circ).
     \end{equation*}

    For the $\lambda$-volume, see \cite[Eq.~(3.20)]{BW:2018}, we have
    \begin{equation*}
        \vol_d^\lambda(K) = \int_{\overline{K}} (1+\lambda\|\bx\|^2)^{-\frac{d+1}{2}}\, \dint\bx
        = \int_{\S^{d-1}} \int_{0}^{\rho_{\overline{K}}(\bu)} \frac{r^{d-1}}{(1+\lambda r^2)^{\frac{d+1}{2}}} \,\dint r\, \dint\bu
        = \int_{\S^{d-1}} J_\lambda(\arctan_{\lambda}(\rho_{\overline{K}}(\bu)))\, \dint\bu,
    \end{equation*}
    where $J_\lambda:[0,\pi/(2\sqrt{\lambda})] \to \R$ is defined by 
    \begin{equation*}
        J_\lambda(\alpha) := \int_{0}^\alpha (\sin_\lambda t)^{d-1}\, \dint t.
    \end{equation*}

    Following ideas of Gao, Hug and Schneider \cite{GHS:2003}, 
    we consider
    \begin{equation*}
        H_\lambda(t):=(\tan_\lambda J_\lambda^{-1}(t))^d.
    \end{equation*}
    Then
    \begin{equation}
        H_\lambda'(t) = d (\tan_\lambda J_\lambda^{-1}(t))^{d-1} \frac{(1+\lambda(\tan_\lambda J_\lambda^{-1}(t))^2)}{(\sin_\lambda J_\lambda^{-1}(t))^{d-1}} = \frac{d}{(\cos_\lambda J_\lambda^{-1}(t))^{d+1}} \geq d
    \end{equation}
    and
    \begin{equation}
        H_\lambda''(t) = \frac{\lambda d(d+1)}{(\cos_\lambda J_\lambda^{-1}(t))^{d+2} (\sin_\lambda J_\lambda^{-1}(t))^{d-2}} \geq \lambda d(d+1).
    \end{equation}
    So $H_\lambda$ is strictly increasing and strictly convex and we conclude by Jensen's inequality that
    \begin{align}
        H_\lambda\left(\frac{\vol_d^\lambda(K)}{\omega_{d-1}}\right) 
        &\leq \frac{1}{d\kappa_d} \int_{\S^{d-1}} H_\lambda(J_\lambda(\arctan_\lambda(\rho_{\overline{K}}(u)))) \, \dint\bu
        = \frac{1}{d\kappa_d} \int_{\S^{d-1}} \rho_{\overline{K}}(u)^d \, \dint\bu = \frac{\vol_d^e(\overline{K})}{\kappa_d}, 
    \end{align}
    with equality if and only if $K$ is a spherical cap centered in $\bo$, or equivalently if $\overline{K}$ is a Euclidean ball centered in the origin.
    Thus, for the geodesic ball $C_K=C(\alpha_K^\lambda,\bo)$ such that $\vol_d^\lambda(K) = \vol_d^\lambda(C_K^\lambda)$, we find $\overline{C}_K^\lambda:=g_\bo^\lambda(C_K^\lambda) = (\tan_\lambda \alpha_K^\lambda)B_2^d$ and
    \begin{equation*}
        (\tan_\lambda\alpha_K^\lambda)^d = \frac{\vol_d^e(\overline{C}_K^\lambda)}{\kappa_d} = H_\lambda\left(\frac{\vol_d^\lambda(C_K^\lambda)}{\omega_{d-1}}\right) 
        = H_\lambda\left(\frac{\vol_d^\lambda(K)}{\omega_{d-1}}\right) \leq \frac{\vol_d^e(\overline{K})}{\kappa_d}.
    \end{equation*}
    Since $\bo$ is the GHS-center, $\overline{K}=g_{\bo}^\lambda(K)$ is centered.
    Thus by the classical Blaschke--Santaló inequality we conclude first
    \begin{equation*}
        d\vol_d^e(\overline{K}^\circ) \leq \omega_{d-1} \frac{\kappa_d}{\vol_d^e(\overline{K})} \leq \omega_{d-1} (\tan_\lambda\alpha_K^\lambda)^{-d}.
    \end{equation*}
    For the second factor we write
    \begin{equation*}
        \int_{\S^{d-1}} \frac{\rho_{\overline{K}}(\bu)^d}{(1+\lambda\rho_{\overline{K}}(\bu)^2)^{\frac{d^2-1}{2d}}}\, \dint\bu 
        = \int_{\S^{d-1}} G_\lambda(J_\lambda(\arctan_\lambda(\rho_{\overline{K}}(\bu)))) \, \dint\bu,
    \end{equation*}
    where
    \begin{equation*}
        G_\lambda(t) 
        := \frac{[\tan_\lambda J_\lambda^{-1}(t)]^d}{(1+\lambda[\tan_\lambda J_\lambda^{-1}(t)]^2)^{\frac{d^2-1}{2d}}} 
        = [\sin_\lambda J_\lambda^{-1}(t)]^d [\cos_\lambda J_\lambda^{-1}(t)]^{-1/d}.
    \end{equation*}
    We calculate
    \begin{align*}
        G_\lambda'(t) 
        &= d [\cos_\lambda J_\lambda^{-1}(t)]^{(d-1)/d} + (\lambda/d) [\sin_\lambda J_\lambda^{-1}(t)]^2 [\cos_\lambda J_\lambda^{-1}(t)]^{-(d+1)/d} \\
        &= d [\cos_\lambda J_\lambda^{-1}(t)]^{-(d+1)/d} \left(1 - \lambda \frac{d^2-1}{d^2} [\sin J^{-1}(t)]^2\right)
    \end{align*}
    and
    \begin{align*}
        G_\lambda''(t) 
        &= \lambda (d+1) [\cos_\lambda J_\lambda^{-1}(t)]^{-(2d+1)/d} [\sin_\lambda J_\lambda^{-1}(t)]^{-(d-2)} \left(1 - \lambda \frac{d^2-1}{d^2} [\sin_\lambda J_\lambda^{-1}(t)]^2\right)\\
            &\qquad - 2\lambda\frac{d^2-1}{d} [\cos_\lambda J_\lambda^{-1}(t)]^{-1/d} [\sin_\lambda J_\lambda^{-1}(t)]^{-(d-2)}\\
        &= \lambda(d+1) [\cos_\lambda J_\lambda^{-1}(t)]^{-(2d+1)/d} [\sin_\lambda J_\lambda^{-1}(t)]^{-(d-2)} \\
            &\qquad \left[ 1- \lambda\frac{d^2-1}{d^2} [\sin_\lambda J_\lambda^{-1}(t)]^2 - 2\frac{d-1}{d} [\cos_\lambda J_\lambda^{-1}(t)]^2\right]\\
        &= -\lambda\frac{(d+1)(d-2)}{d} [\cos_\lambda J_\lambda^{-1}(t)]^{-(2d+1)/d} [\sin_\lambda J_\lambda^{-1}(t)]^{-(d-2)} \left[ 1 -\lambda\frac{(d-1)^2}{d(d-2)}[\sin_\lambda J_\lambda^{-1}(t)]^2 \right].
    \end{align*}
    Thus, for $d\geq 3$, $G_\lambda''(t) \leq 0$ if and only if 
    \begin{align*}
        \sin_\lambda J_\lambda^{-1}(t) \leq \frac{1}{\sqrt{\lambda}}\,\frac{\sqrt{d(d-2)}}{d-1} \quad \Leftrightarrow  \quad 
        \tan_\lambda J_\lambda^{-1}(t) = \frac{\sin_\lambda J_\lambda^{-1}(t)}{\sqrt{1-\lambda[\sin_\lambda J_\lambda^{-1}(t)]^2}} \leq \sqrt{\frac{d(d-2)}{\lambda}}.
    \end{align*}
  Since $\rho_{\overline{K}}(\bu) \leq \sqrt{d(d-2)/\lambda}$ for all $\bu\in\S^{n-1}$,  we have that $G_\lambda$ is concave and by Jensen's inequality we get that
    \begin{align}\notag
        \frac{1}{\omega_{d-1}} \int_{\S^{d-1}} \frac{\rho_{\overline{K}}(\bu)^d}{(1+\lambda\rho_{\overline{K}}(\bu)^2)^{\frac{d^2-1}{2d}}}\, \dint\bu
        &\leq G_\lambda\left( \frac{1}{\omega_{d-1}} \int_{\S^{d-1}} J_\lambda(\arctan_\lambda(\rho_{\overline{K}}(\bu))) \, \dint \bu\right)\\
        &= G_\lambda\left(\frac{\vol_d^\lambda(K)}{\omega_{d-1}}\right) \notag
        = G_\lambda\left(\frac{\vol_d^\lambda(C_K^\lambda)}{\omega_{d-1}}\right) =G_\lambda(J_\lambda(\alpha_K^\lambda))\\
        &= \frac{(\tan_\lambda \alpha_K^\lambda)^d}{(1+\lambda(\tan_\lambda \alpha_K^\lambda)^2)^{\frac{d^2-1}{2d}}} 
        = (\sin_\lambda \alpha_K^\lambda)^d (\cos_\lambda \alpha_K^\lambda)^{-1/d}.\label{eqn:equality_case}
    \end{align}
 
 Combing all of the above and \eqref{eqn:FA_ball_lambda}, we conclude
    \begin{align*}
        \Omega^\lambda_1(K) 
        &\leq \omega_{d-1} (\tan_\lambda\alpha_K^\lambda)^{-\frac{d}{d+1}} (\sin_\lambda \alpha_K^\lambda)^{\frac{d^2}{d+1}} 
            (\cos_\lambda \alpha_K^\lambda)^{-\frac{1}{d+1}}\\
        &= \omega_{d-1} (\cos_\lambda \alpha_K^\lambda)^{\frac{d-1}{d+1}} (\sin_\lambda \alpha_K^\lambda)^{\frac{d(d-1)}{d+1}} = \Omega^\lambda_1(C_K^\lambda).
    \end{align*}
    By \eqref{eqn:equality_case}, we have equality if and only if $\rho_{\overline{K}}(\bu) = \tan_\lambda \alpha_K^\lambda$ for all $\bu\in\S^{d-1}$ since $\rho_{\overline{K}}$ is continuous. Thus equality holds if and only if $K=C_K^\lambda$ is a geodesic ball.
\end{proof}

\begin{remark}[the Euclidean case for $\lambda\to 0^+$]
    For $\lambda\to 0^+$ we have
    \begin{equation*}
        \alpha_K^\lambda = J_\lambda^{-1}\left(\frac{\vol_d^\lambda(K)}{\omega_{d-1}}\right) \to \left(\frac{\vol_d^e(K)}{\kappa_d}\right)^{\frac{1}{d}},
    \end{equation*}
    which also yields
    \begin{equation*}
        \Omega^\lambda_1(C_K^\lambda) = \omega_{d-1} (\cos_\lambda\alpha_K)^{\frac{d-1}{d+1}} (\sin_\lambda\alpha_K)^{\frac{d(d-1)}{d+1}} \to d\, \kappa_d^{\frac{2}{d+1}} \vol_d^e(K)^{\frac{d-1}{d+1}} = \as_1(B_K).
    \end{equation*}
    For $d\geq 3$ Theorem \ref{thm:lambda_FI} shows that the classical affine isoperimetric inequality belongs to a real-analytic family of isoperimetric inequalities. Remarkably, for $d=2$ the above proof of Theorem \ref{thm:lambda_FI} does not appear to work.
\end{remark}

\begin{remark}[Progress on the conjectured $\lambda$-floating area inequality for $\lambda>0$]
    For $d\geq 2$ and $\lambda>0$ let $K\subset\Sp^d(\lambda)$ be a proper convex body. Since $\Omega^\lambda$ is upper semi-continuous and by compactness there is a convex body $L_0$ that maximizes
    \begin{equation*}
        \Omega^\lambda_1(L_0) = \max \left\{ \Omega^\lambda(L) : \vol_d^\lambda(L)=\vol_d^\lambda(K)\right\}.
    \end{equation*}
    We conjecture that $L_0=C_K^\lambda$ is a geodesic ball of the same $\lambda$-volume as $K$. Theorem \ref{thm:lambda_FI}  verifies this if $K$ can be bounded with respect to the GHS-center $\bo$ by a geodesic ball of radius
    \begin{equation*}
        R(\lambda) := \frac{1}{\sqrt{\lambda}} \arctan \sqrt{\frac{d(d-2)}{\lambda}} \geq \frac{\pi}{2\sqrt{\lambda}} - \frac{1}{\sqrt{d(d-2)}}.
    \end{equation*}
    Since a proper convex body $K$ is by definition contained in the interior of a half-space, i.e., a geodesic ball of radius $\frac{\pi}{2\sqrt{\lambda}}$, we see that asymptotically for $d\to\infty$ we have $R(\lambda)\to \frac{\pi}{2\sqrt{\lambda}}$. So heuristically the obstruction imposed by $R(\lambda)$ vanishes asymptotically as the dimension increases.
 
    Also note that for $\lambda=1$ and $d\geq 3$, in Corollary \ref{cor:p-float_ineq} we find that 
    \begin{equation*}
        \Omega^s_1(K)\leq \Omega^s_1(C_K),
    \end{equation*}
    if $K\subset \S^d$ is a proper spherical convex body of class $\cC^2_+$ and such that $K\supset C_{\bo}(\arctan\sqrt{d})$.

    Finally, we mention that  a proof of the isoperimetric inequality for the spherical case $\lambda=1$ and $d=2$ was very recently established by Basit, Hoehner, Lángi \& Ledford \cite[Thm.\ 3.8]{BHLL:2024}: if $K\subset \S^2$ is a proper spherical convex body of class $\cC^2_+$ that is symmetric about a fixed center $\bo\in\S^d$. Their approach is completely different from our proof of Theorem \ref{thm:lambda_FI} and instead looks at optimal polygonal approximation of spherical convex bodies in terms of the volume difference.
\end{remark}

\section{Stability for the dual volume and floating area on the sphere} \label{sec:stab}

For a proper spherical convex body $K\subset \S^d$ we have seen that the gnomonic projection with respect to the GHS-center $\bo=\bo(K)$ gives a projective model $\overline{K}:=g_{\bo}(K)\subset \R^d$ of $K$ that is centered. 
Since the function $H(t) = (\tan J^{-1}(t))^d$ is strictly convex, we find for the Euclidean volume of $\overline{K}$ that
\begin{equation*}
    \frac{\vol_d^e(\overline{K})}{\kappa_d}
    = \frac{1}{\omega_{d-1}} \int_{\S^{d-1}} \rho_{\overline{K}}(\bu)\,\dint\bu 
    = \frac{1}{\omega_{d-1}} \int_{\S^{d-1}} H(J(\arctan \rho_{\overline{K}}(\bu)))\, \dint \bu
    \geq H\left(\frac{\vol_d^s(K)}{\omega_{d-1}}\right).
\end{equation*}
This inequality was used by Gao, Hug \& Schneider \cite{GHS:2003} to proof the spherical Blaschke--Santaló inequality. Recently a stability version of this inequality was established by Hug \& Reichenbacher \cite{HR:2017}, which we reformulate in the following

\begin{lemma}[{Stability of the projected volume \cite[Eq.~(5)]{HR:2017}}]\label{lem:HR}
Let $d\geq 2$, $K\subset \S^d$ be a proper spherical convex body and let $\bo\in(\interior K)\cap (\interior K^*)$. Then
    \begin{equation*}
        \frac{\vol_d^e(g_{\bo}(K))}{\kappa_d} \geq (1+\beta\Delta_2(K)^2)\, H\!\left(\frac{\vol_d^s(K)}{\omega_{d-1}}\right),
    \end{equation*}
    where
    \begin{equation}\label{def:beta}
        \beta := \frac{d(d+1)}{2 H(\vol_d^s(K)/\omega_{d-1})} = \frac{d(d+1)}{2(\tan\alpha_K)^d}.
    \end{equation}
    and
    \begin{equation}\label{def:Delta2}
        \Delta_2(K)^2 := \frac{1}{\omega_{d-1}}\int_{\S^{d-1}} [J(\rho_{\bo}(K,\bu))-J(\alpha_K)]^2\, \dint\bu
    \end{equation}
\end{lemma}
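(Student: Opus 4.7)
The plan is to upgrade the Jensen-type inequality underlying the Gao--Hug--Schneider proof of the spherical Blaschke--Santaló inequality to a quantitative form by exploiting the strict convexity of $H(t) := (\tan J^{-1}(t))^d$ on the interval $(0, J(\pi/2))$. Integration in radial coordinates at $\bo$ gives the two identities
\begin{equation*}
    \frac{\vol_d^e(g_{\bo}(K))}{\kappa_d} = \frac{1}{\omega_{d-1}} \int_{\S^{d-1}} H\bigl(J(\rho_\bo(K,\bu))\bigr)\,\dint\bu
    \quad\text{and}\quad
    t_0 := J(\alpha_K) = \frac{1}{\omega_{d-1}}\int_{\S^{d-1}} J(\rho_\bo(K,\bu))\,\dint\bu,
\end{equation*}
so $t_0$ is exactly the spherical mean of the values $s(\bu) := J(\rho_\bo(K,\bu))$.

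The main analytic ingredient is a \emph{uniform} lower bound on $H''$. Setting $\alpha = J^{-1}(t)$ and differentiating using $J'(\alpha) = (\sin\alpha)^{d-1}$ and $(\tan\alpha)' = (\cos\alpha)^{-2}$, one obtains
\begin{equation*}
    H'(t) = \frac{d}{(\cos\alpha)^{d+1}}, \qquad H''(t) = \frac{d(d+1)}{(\cos\alpha)^{d+2}\,(\sin\alpha)^{d-2}}.
\end{equation*}
For a proper $K$ we have $\rho_\bo(K,\bu) \in (0, \pi/2)$ for every $\bu \in \S^{d-1}$, hence $\cos\alpha, \sin\alpha \in (0,1]$; since $d \geq 2$, both factors in the denominator of $H''(t)$ are at most one, so $H''(t) \geq d(d+1)$ throughout the range of $t$ relevant to the integrand.

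With this strong-convexity estimate in hand, Taylor's theorem with integral remainder at $t_0$ yields
\begin{equation*}
    H(s) \geq H(t_0) + H'(t_0)(s - t_0) + \frac{d(d+1)}{2}(s - t_0)^2.
\end{equation*}
Substituting $s = s(\bu)$ and averaging over $\S^{d-1}$ with respect to $\dint\bu/\omega_{d-1}$, the linear term vanishes because $t_0$ is precisely the mean of $s(\bu)$, which produces
\begin{equation*}
    \frac{\vol_d^e(g_\bo(K))}{\kappa_d} \geq H(t_0) + \frac{d(d+1)}{2}\, \Delta_2(K)^2.
\end{equation*}
Dividing by $H(t_0) = (\tan\alpha_K)^d$ and recognising the resulting prefactor as $\beta$ from \eqref{def:beta} gives the claimed inequality.

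The only potential obstacle is securing the uniform lower bound on $H''$ without additional assumptions on $K$; in contrast to Theorem~\ref{thm:lambda_FI}, where a comparable bound forced a geometric containment condition, here the algebraic inequalities $(\cos\alpha)^{d+2} \leq 1$ and $(\sin\alpha)^{d-2} \leq 1$ hold for free on $(0,\pi/2)$, and no further hypothesis on $K$ or on the reference point $\bo$ is required beyond the assumption $\bo \in (\interior K)\cap(\interior K^*)$ already made in the lemma.
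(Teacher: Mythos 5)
Your proof is correct: the identities in radial coordinates, the computation giving $H''(t)=d(d+1)\,(\cos\alpha)^{-(d+2)}(\sin\alpha)^{-(d-2)}\geq d(d+1)$ for $d\geq 2$, and the quantitative Jensen step via strong convexity (with the linear term killed because $J(\alpha_K)$ is exactly the spherical mean of $J(\rho_{\bo}(K,\cdot))$) all check out and yield precisely $(1+\beta\Delta_2(K)^2)H(\vol_d^s(K)/\omega_{d-1})$. The paper itself does not prove this lemma but imports it from Hug \& Reichenbacher; your argument is the expected one and uses the same second-derivative bound that the paper records (for general $\lambda$) in the proof of Theorem~\ref{thm:lambda_FI}, so this is essentially the same approach.
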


We derive the following
\begin{corollary}
    Let $d\geq 2$, let $K\subset \S^d$ be a proper spherical convex body and let $\bo\in(\interior K)\cap (\interior K^*)$. If for some $\varepsilon>0$ 
    \begin{equation*}
        \frac{\vol_d^e(g_{\bo}(K))}{\kappa_d} \leq (1+\varepsilon) \, H\!\left(\frac{\vol_d^s(K)}{\omega_{d-1}}\right),
    \end{equation*}
    then
    \begin{equation*}
        \vol_d^s(K\triangle C_K(\bo)) \leq \omega_{d-1}\sqrt{\frac{\varepsilon}{\beta}},
    \end{equation*}
    where $\triangle$ denotes the symmetric difference, i.e., $A\triangle B = (A\setminus B) \cup (B\setminus A)$.
\end{corollary}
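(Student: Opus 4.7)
The plan is to combine the stability estimate of Lemma \ref{lem:HR} with the hypothesis to extract an $L^2$-control on the radial deviation of $K$ from the centered cap, and then to convert this $L^2$-bound into the claimed $L^1$-type bound on the symmetric difference via Cauchy--Schwarz.

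First, I would apply Lemma \ref{lem:HR} at the same base point $\bo$, which yields
\begin{equation*}
    (1+\beta\Delta_2(K)^2)\,H\!\left(\tfrac{\vol_d^s(K)}{\omega_{d-1}}\right) \leq \frac{\vol_d^e(g_{\bo}(K))}{\kappa_d} \leq (1+\varepsilon)\,H\!\left(\tfrac{\vol_d^s(K)}{\omega_{d-1}}\right).
\end{equation*}
Since $H(\vol_d^s(K)/\omega_{d-1})=(\tan\alpha_K)^d>0$, this immediately gives $\Delta_2(K)^2\leq \varepsilon/\beta$.

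Next, I would express the spherical symmetric difference in radial coordinates around $\bo$. Because both $K$ and $C_K(\bo)=C_{\bo}(\alpha_K)$ contain $\bo$ in their interior and are star-shaped with respect to $\bo$, for each direction $\bu\in\S^d\cap\bo^\perp\cong \S^{d-1}$ the geodesic ray from $\bo$ in direction $\bu$ meets $K$ up to distance $\rho_{\bo}(K,\bu)$ and meets $C_K(\bo)$ up to distance $\alpha_K$; the measure of the portion of this ray lying in the symmetric difference is $|J(\rho_{\bo}(K,\bu))-J(\alpha_K)|$. Integrating in radial coordinates therefore gives
\begin{equation*}
    \vol_d^s(K\triangle C_K(\bo)) = \int_{\S^{d-1}} \bigl|J(\rho_{\bo}(K,\bu))-J(\alpha_K)\bigr|\,\dint\bu.
\end{equation*}

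Finally, I would apply the Cauchy--Schwarz inequality together with the definition \eqref{def:Delta2} of $\Delta_2(K)$ to obtain
\begin{equation*}
    \vol_d^s(K\triangle C_K(\bo)) \leq \sqrt{\omega_{d-1}} \left(\int_{\S^{d-1}} [J(\rho_{\bo}(K,\bu))-J(\alpha_K)]^2\,\dint\bu\right)^{1/2} = \omega_{d-1}\,\Delta_2(K) \leq \omega_{d-1}\sqrt{\tfrac{\varepsilon}{\beta}},
\end{equation*}
which is the claimed inequality. There is no serious obstacle here: the only non-cosmetic step is the radial formula for the symmetric difference, which is immediate from the fact that $\bo\in(\interior K)\cap(\interior C_K(\bo))$ and both bodies are geodesically star-shaped with respect to $\bo$; everything else is a direct consequence of Lemma \ref{lem:HR} and Cauchy--Schwarz.
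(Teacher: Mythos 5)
Your proposal is correct and follows essentially the same route as the paper: apply Lemma \ref{lem:HR} to get $\Delta_2(K)^2\leq\varepsilon/\beta$, write the symmetric difference in radial coordinates about $\bo$ as $\int_{\S^{d-1}}|J(\rho_{\bo}(K,\bu))-J(\alpha_K)|\,\dint\bu$, and pass from this $L^1$ quantity to $\omega_{d-1}\Delta_2(K)$ by Cauchy--Schwarz (the paper phrases the same step as the $L^1$--$L^2$ comparison for the normalized measure on $\S^{d-1}$). The only cosmetic difference is the order of the steps.
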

\begin{proof}
    This follows from Lemma \ref{lem:HR} since
    \begin{align} \notag
        \Delta_2(K) 
        &\geq \frac{1}{\omega_{d-1}} \int_{\S^{d-1}} \left|J(\rho_{\bo}(K,\bu))-J(\alpha_K)\right|\,\dint\bu\\ \notag
        &= \frac{1}{\omega_{d-1}} \int_{\S^{d-1}} J(\rho_{\bo}(K\cup C_K(\bo),\bu) - J(\rho_{\bo}(K\cap C_K(\bo),\bu) \,\dint\bu\\ \label{eqn:Delta_2_lower}
        &= \frac{\vol_d^s(K\cup C_K(\bo))-\vol_d^s(K\cap C_K(\bo))}{\omega_{d-1}}
        = \frac{\vol_d^s(K\triangle C_K(\bo))}{\omega_{d-1}}.
    \end{align}
    Thus, by Lemma \ref{lem:HR},
    \begin{equation*}
        \left(1+\beta \frac{\vol_d^s(K\triangle C_K(\bo))^2}{\omega_{d-1}^2}\right) H\left(\frac{\vol_d^s(K)}{\omega_{d-1}}\right) \leq \frac{\vol_d^e(g_{\bo}(K))}{\kappa_d} \leq (1+\varepsilon) H\left(\frac{\vol_d^s(K)}{\omega_{d-1}}\right).
    \end{equation*}
    This concludes the proof.
\end{proof}

Hug \& Reichenbacher \cite{HR:2017} use Lemma \ref{lem:HR} in the GHS-center to establish the following stability version of the spherical Blaschke--Santaló inequality due to Gao, Hug \& Schneider \cite{GHS:2003}. Below we give a slightly modified version of \cite[Thm.\ 4.1]{HR:2017}.

\begin{theorem}[Stability for the dual volume \cite{HR:2017}] \label{thm:stab_HR}
Let $d\geq 2$ and let $K\subset \S^d$ be a proper spherical convex body. If for some $\varepsilon\in(0,1)$
\begin{equation*}
     \vol_d^s(K^*) \geq (1-\varepsilon) \vol_d^s(C_K^*),
\end{equation*}
then
\begin{equation*}
    \vol_d^s(K\triangle C_K(\bo)) \leq  \omega_{d-1} \gamma \sqrt{\varepsilon},
\end{equation*}
where $\bo=\bo(K)$ is the GHS-center and
$\gamma :=  \frac{\pi}{2} \sqrt{ \frac{1}{\sin\alpha_K} \left(1 + \frac{8}{d(d+1)\pi^2} (\tan\alpha_K)^{d}\right) } $.
\end{theorem}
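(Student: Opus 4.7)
The plan is to reproduce the Hug--Reichenbacher argument \cite{HR:2017} by combining the stability of the projected volume (Lemma \ref{lem:HR}) with the classical Euclidean Blaschke--Santaló inequality, both applied via the projective model at the GHS-center $\bo=\bo(K)$.

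First I would set up the coordinates. By Theorem \ref{thm:GHS}, $\overline{K}:=g_{\bo}(K)$ is a centered Euclidean convex body, and $\bo\in(\interior K)\cap(\interior K^*)$, so that $g_{\bo}(K^*) = -\overline{K}^\circ$ is bounded as well. Applying Lemma \ref{lem:HR} to $K$ with respect to the center $\bo$ gives
\begin{equation*}
    \frac{\vol_d^e(\overline{K})}{\kappa_d} \geq (1+\beta\Delta_2(K)^2)\, (\tan\alpha_K)^d,
\end{equation*}
with $\beta = d(d+1)/(2(\tan\alpha_K)^d)$. Applying the (non-quantitative) Jensen inequality underlying Lemma \ref{lem:HR} to $K^*$, also at the center $\bo$, yields
\begin{equation*}
    \frac{\vol_d^e(\overline{K}^\circ)}{\kappa_d} = \frac{\vol_d^e(g_{\bo}(K^*))}{\kappa_d} \geq (\tan\alpha_{K^*})^d.
\end{equation*}
Since $\overline{K}$ is centered, the classical Euclidean Blaschke--Santaló inequality gives $\vol_d^e(\overline{K})\vol_d^e(\overline{K}^\circ) \leq \kappa_d^2$. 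Multiplying the three estimates together, one obtains the stability-enhanced spherical dual volume inequality
\begin{equation*}
    (1+\beta\Delta_2(K)^2)(\tan\alpha_K\tan\alpha_{K^*})^d \leq 1.
\end{equation*}

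Setting $v = \vol_d^s(K^*)/\omega_{d-1}$ and $v_0 = \vol_d^s(C_K^*)/\omega_{d-1} = J(\pi/2-\alpha_K)$, the previous bound becomes $\beta\Delta_2(K)^2 \leq H(v_0)/H(v)-1$. Next I would translate the hypothesis $v \geq (1-\varepsilon)v_0$ into a bound on $H(v_0)-H(v)$. Using $H'(t) = d/\cos^{d+1}(J^{-1}(t))$ and monotonicity, together with $\cos(J^{-1}(v_0)) = \sin\alpha_K$ and $v_0 \leq \pi/2$ (since $J(\alpha)\leq\alpha$), the mean value theorem yields
\begin{equation*}
    H(v_0) - H(v) \leq H'(v_0)(v_0-v) \leq \frac{d\varepsilon v_0}{\sin^{d+1}\alpha_K} \leq \frac{\pi d\,\varepsilon}{2\sin^{d+1}\alpha_K}.
\end{equation*}
Inserting this and the explicit form of $\beta$ into $\beta\Delta_2^2 \leq (H(v_0)-H(v))/H(v)$, bounding $H(v)$ from below by $H(v_0) - (H(v_0)-H(v)) = (\cot\alpha_K)^d - O(\varepsilon)$ and carefully collecting the arising constants, one obtains $\Delta_2(K)^2 \leq \gamma^2 \varepsilon$ with $\gamma$ as stated. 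Finally, the inequality $\vol_d^s(K\triangle C_K(\bo))/\omega_{d-1} \leq \Delta_2(K)$ established in \eqref{eqn:Delta_2_lower} immediately gives the claimed stability bound.

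The main obstacle is the last bookkeeping step: the factor $1/H(v)$ in the estimate $\beta\Delta_2^2 \leq (H(v_0)-H(v))/H(v)$ must be controlled uniformly in $\varepsilon\in(0,1)$. This is exactly what produces both the additive $1/\beta$ correction and the $1/\sin\alpha_K$ prefactor appearing in the explicit constant $\gamma$, and is the delicate part of the argument that requires the specific trigonometric structure of $H$ and $J$ rather than a soft Taylor-expansion argument.
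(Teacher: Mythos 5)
Your first step is sound and, interestingly, more self-contained than the paper's: the paper simply quotes the intermediate Hug--Reichenbacher inequality $\vol_d^s(K^*)\leq(1-\beta_3\Delta_2(K)^2)\vol_d^s(C_K(\bo)^*)$ with its explicit constant $\beta_3$ from \cite[p.~10]{HR:2017} and spends its effort on simplifying $\beta_3$, whereas you rederive the multiplicative form $(1+\beta\Delta_2(K)^2)(\tan\alpha_K\tan\alpha_{K^*})^d\leq 1$ from Lemma \ref{lem:HR}, Jensen's inequality for $K^*$ at $\bo$, and the Euclidean Blaschke--Santal\'o inequality. Up to the reformulation $\beta\Delta_2(K)^2\leq H(v_0)/H(v)-1$ everything you write is correct.

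The gap is in the bookkeeping step you yourself flag as delicate, and the route you propose for it does not close. You bound $H(v)$ from below by $H(v_0)-(H(v_0)-H(v))\geq(\cot\alpha_K)^d-\tfrac{\pi d\varepsilon}{2}(\sin\alpha_K)^{-(d+1)}$; but since $\tfrac{\pi d}{2}(\sin\alpha_K)^{-(d+1)}>(\cot\alpha_K)^d$ for every $\alpha_K\in(0,\pi/2)$, this lower bound is negative, hence useless, once $\varepsilon$ is bounded away from $0$, so the factor $1/H(v)$ is not controlled uniformly on $\varepsilon\in(0,1)$ by this device. The mechanism that actually produces $\gamma$ is different: one rewrites the stability bound as $\tfrac{H(v_0)-H(v)}{H(v_0)}\geq\tfrac{\beta\Delta_2(K)^2}{1+\beta\Delta_2(K)^2}$ and invokes the a priori estimate $\Delta_2(K)\leq J(\pi/2)\leq\pi/2$ to control the denominator --- this is precisely the origin of the summand $(\pi/2)^2+1/\beta_1$ in Hug--Reichenbacher's constant $\beta_2$ --- and only then converts $H(v_0)-H(v)$ into $v_0-v$ via $H'(v_0)=d(\sin\alpha_K)^{-(d+1)}$. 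Even after that, one still needs the elementary but non-obvious monotonicity of $F(s)=J(s)/(\sin s)^d$, equivalently $J(\pi/2-\alpha_K)\leq(\cos\alpha_K)^d/(d\sin\alpha_K)$, to reduce the resulting constant $\beta_3$ to the clean expression $1/\gamma^2$; this estimate is where most of the paper's proof is spent. Neither ingredient appears in your sketch, so the final inequality $\Delta_2(K)^2\leq\gamma^2\varepsilon$ is not actually reached.
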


\begin{proof}
    Since $\bo=\bo(K)$ is the GHS-center, we have that $\overline{K}=g_{\bo}(K)$ has centroid in the origin.
    Hug \& Reichenbacher \cite[p.\ 10]{HR:2017} show that
    \begin{equation}\label{eqn:HR-stab1}
        \vol_d^s(K^*) \leq (1-\beta_3\Delta_2(K)^2) \vol_d^s(C_K(\bo)^*),
    \end{equation}
    where 
    \begin{align*}
        \beta_1 &:= \frac{d(d+1)}{2(\tan\alpha_K)^{d}}, &
        \beta_2 &:= \frac{1}{(\pi/2)^2 + 1/\beta_1}, &
        \beta_3 &:= \min \left\{\frac{\beta_2}{d} \frac{(\sin\alpha_K)^{d+1}}{(\tan\alpha_K)^{d} J(\frac{\pi}{2}-\alpha_K)}, \frac{4}{\pi^2}\right\},
    \end{align*}
    and $\Delta_2(K)$ is defined as before, see \eqref{def:Delta2}.

    We  verify that \eqref{eqn:HR-stab1} implies the statement of the theorem. First, we show that $F(s):=\frac{J(s)}{(\sin s)^{d}}$ is increasing. We claim
    \begin{equation}\label{eqn:proof1}
        F'(s) = \frac{1-d(\cos s)F(s)}{\sin s} \overset{?}{\geq} 0, \qquad \text{for all $0\leq s \leq \frac{\pi}{2}$}.
    \end{equation}
    To see this consider $G(s) := \frac{1}{d}F'(s)(\tan s) (\sin s)^d = \frac{1}{d}(\tan s)(\sin s)^{d-1}-J(s)$ and observe that
    \begin{equation*}
        G'(s) = \frac{1}{d} (\tan s)^2(\sin s)^{d-1} \geq 0.
    \end{equation*}
    Thus $G(s)\geq G(0)=0$ for all $s\in[0,\frac{\pi}{2}]$, which implies \eqref{eqn:proof1} as claimed. Moreover, \eqref{eqn:proof1} yields
    \begin{equation*}
        F(s) \leq \frac{1}{d\cos s},
    \end{equation*}
    and therefore
    \begin{equation*}
        \frac{J(\frac{\pi}{2}-\alpha_K)}{(\cos\alpha_K)^d} = F\left(\frac{\pi}{2}-\alpha_K\right) \leq \frac{1}{d \sin \alpha_K}.
    \end{equation*}
    Hence
    \begin{equation*}
        \beta_3 \geq \min\left\{\beta_2 (\sin\alpha_K)^2,\frac{4}{\pi^2}\right\} = \frac{4}{\pi^2} \min\left\{ \frac{(\sin\alpha_K)^2}{1+ \frac{8}{d(d+1)\pi^2} (\tan\alpha_K)^d}, 1\right\}.
    \end{equation*}
    Now
    \begin{equation*}
        \frac{(\sin\alpha_K)^2}{1 + \frac{8}{d(d+1)\pi^2} (\tan\alpha_K)^d} \leq \frac{1}{1 + \frac{8}{d(d+1)\pi^2} (\tan\alpha_K)^d} \leq 1.
    \end{equation*}
    We conclude that
    \begin{equation*}
        \beta_3 \geq \frac{4}{\pi^2} \frac{(\sin\alpha_K)^2}{1 + \frac{8}{d(d+1)\pi^2} (\tan\alpha_K)^d} =\frac{1}{\gamma^2}.
    \end{equation*}
    By the above and \eqref{eqn:HR-stab1}, we obtain that
    \begin{align*}
        [1-(\Delta_2(K)/\gamma)^2] \vol_d^s(C_K(\bo)^*) 
        &\geq \vol_d^s(K^*) 
        \geq (1-\varepsilon) \vol_d^s(C_K(\bo)^*),
    \end{align*}
    which yields
    \begin{equation*}
        \gamma \sqrt{\varepsilon} \geq \Delta_2(K) \geq \frac{\vol_d^s(K\triangle C_K(\bo))}{\omega_{d-1}},
    \end{equation*}
    where the final inequality follows by \eqref{eqn:Delta_2_lower}.
\end{proof}
Using Lemma \ref{lem:HR} we are able to derive the following stability version for the isoperimetric inequality for $\Omega^s_1$.

\begin{theorem}[Stability for the floating area]\label{thm:stability_fa}
    Let $d\geq 3$ and let $K\subset \S^d$ be a proper spherical convex body such that $\overline{K}:=g_{\bo}(K)\subset\sqrt{d(d-2)}B_2^d$, where $\bo=\bo(K)$ is the GHS-center. If for some $\varepsilon\in(0,\frac{1}{d+1})$ 
    \begin{equation*}
        \Omega^s_1(K) \geq (1-\varepsilon) \Omega^s_1(C_K),
    \end{equation*}
    then
    \begin{equation*}
        \vol_d^s(K\triangle C_K(\bo)) \leq \omega_{d-1} \tau \sqrt{\varepsilon},
    \end{equation*}
    where $\tau := \sqrt{3H(\vol_d^s(K)/\omega_{d-1})} =\sqrt{3(\tan\alpha_K)^d} \leq 2d^{d/2}$.
\end{theorem}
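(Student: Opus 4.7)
The strategy will be to revisit the proof of Theorem \ref{thm:sfa-ineq} (the $\lambda=1$ instance of Theorem \ref{thm:lambda_FI}) and, at the single place where the classical Blaschke--Santaló inequality is invoked, insert the stability strengthening provided by Lemma \ref{lem:HR}. Recall that the proof of Theorem \ref{thm:lambda_FI} produces, via Hölder's inequality,
\begin{equation*}
    \Omega^s_1(K) \leq \left(d\vol_d^e(\overline{K}^\circ)\right)^{\frac{1}{d+1}} \left(\int_{\S^{d-1}} \frac{\rho_{\overline{K}}(\bu)^d}{(1+\rho_{\overline{K}}(\bu)^2)^{(d^2-1)/(2d)}}\, \dint\bu\right)^{\frac{d}{d+1}},
\end{equation*}
and bounds the second factor by $\omega_{d-1}(\sin\alpha_K)^d(\cos\alpha_K)^{-1/d}$ through Jensen's inequality applied to the concave function $G_1$ --- concavity being precisely what the containment hypothesis $\overline{K}\subset\sqrt{d(d-2)}B_2^d$ secures. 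For the first factor, the centeredness of $\overline{K}=g_{\bo}(K)$ (a consequence of $\bo$ being the GHS-center), together with classical Blaschke--Santaló, yields $d\vol_d^e(\overline{K}^\circ)\leq \omega_{d-1}\kappa_d/\vol_d^e(\overline{K})$.

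The plan is to retain the Jensen estimate verbatim and upgrade this last Blaschke--Santaló step through Lemma \ref{lem:HR}, which gives $\vol_d^e(\overline{K})/\kappa_d \geq (1+\beta\Delta_2(K)^2)(\tan\alpha_K)^d$ with $\beta=d(d+1)/(2(\tan\alpha_K)^d)$. Substituting this improved bound and simplifying exactly as in the final computation of the proof of Theorem \ref{thm:lambda_FI} yields the quantitative deficit estimate
\begin{equation*}
    \Omega^s_1(K) \leq (1+\beta\Delta_2(K)^2)^{-\frac{1}{d+1}}\, \Omega^s_1(C_K),
\end{equation*}
and, combined with the hypothesis $\Omega^s_1(K)\geq (1-\varepsilon)\Omega^s_1(C_K)$, this rearranges to $\beta\Delta_2(K)^2 \leq (1-\varepsilon)^{-(d+1)}-1$.

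It then remains to translate this into the asserted stability bound. Substituting the value of $\beta$ and invoking the lower bound $\Delta_2(K)\geq \vol_d^s(K\triangle C_K(\bo))/\omega_{d-1}$ from \eqref{eqn:Delta_2_lower}, it suffices to verify the elementary inequality
\begin{equation*}
    (1-\varepsilon)^{-(d+1)}-1 \leq \tfrac{3d(d+1)}{2}\,\varepsilon \qquad \text{for } \varepsilon\in (0,1/(d+1)) \text{ and } d\geq 3,
\end{equation*}
which is obtained by combining the numerator estimate $1-(1-\varepsilon)^{d+1}\leq (d+1)\varepsilon$ with the uniform denominator bound $(1-\varepsilon)^{d+1}\geq (d/(d+1))^{d+1}\geq (3/4)^4$ and comparing constants. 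The auxiliary estimate $\tau\leq 2d^{d/2}$ follows directly from the containment hypothesis, which forces $(\tan\alpha_K)^d\leq \vol_d^e(\overline{K})/\kappa_d \leq (d(d-2))^{d/2}\leq d^d$ and therefore $\tau=\sqrt{3(\tan\alpha_K)^d}\leq \sqrt{3}\,d^{d/2}\leq 2d^{d/2}$. The only mildly delicate step is this last arithmetic check, which is what pins down the factor $3$ inside $\tau$ and the admissible range $\varepsilon<1/(d+1)$; the substantive observation is simply that Blaschke--Santaló enters the proof of Theorem \ref{thm:sfa-ineq} at exactly one point, and Lemma \ref{lem:HR} is an off-the-shelf stability version of precisely that step.
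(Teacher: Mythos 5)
Your proposal is correct and follows essentially the same route as the paper: the paper's proof likewise re-runs the argument of Theorem \ref{thm:lambda_FI} for $\lambda=1$, upgrades the single Blaschke--Santaló step via Lemma \ref{lem:HR} to obtain $(1-\varepsilon)^{d+1}\leq (1+\beta\Delta_2(K)^2)^{-1}$, and then combines \eqref{eqn:Delta_2_lower} with an elementary estimate of $(1-\varepsilon)^{-(d+1)}-1$ (the paper bounds it by $3(d+3)\varepsilon$ via a derivative argument, you by $\tfrac{3d(d+1)}{2}\varepsilon$ via Bernoulli; both yield $\tau=\sqrt{3(\tan\alpha_K)^d}$). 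The only differences are in this final arithmetic, and your version checks out for $d\geq 3$ and $\varepsilon<\tfrac{1}{d+1}$.
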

\begin{proof}
    For $\lambda=1$ we use the same definitions as in the proof of Theorem \ref{thm:lambda_FI}.
    By \eqref{eqn:proof_iso_ineq} and \eqref{eqn:equality_case} we conclude
    \begin{align*}
        \frac{\Omega^s_1(K)}{\omega_{d-1}} \leq \left(\frac{\vol_d^s(\overline{K}^\circ)}{\kappa_d}\right)^{\frac{1}{d+1}} G\left(\frac{\vol_d^s(K)}{\omega_{d-1}}\right)^{\frac{d}{d+1}}
    \end{align*}
    Since $\bo$ is the GHS-center, we have that $\overline{K}$ is centered and by the Blaschke--Santaló inequality and Lemma \ref{lem:HR} we derive
    \begin{equation*}
        \frac{\vol_d^e(\overline{K}^\circ)}{\kappa_d} \leq \frac{\kappa_d}{\vol_d^e(\overline{K})} \leq \frac{1}{(1+\beta\Delta_2(K)^2) H(\vol_d^s(K)/\omega_{d-1})}
        =\frac{(\tan\alpha_K)^{-d}}{1+\beta\Delta_2(K)^2}.
    \end{equation*}
    Note that
    \begin{align*}
        \frac{\Omega^s_1(C_K)}{\omega_{d-1}}
        &= \left(\frac{\vol_d^e(\overline{C}_K^\circ)}{\kappa_d}\right)^\frac{1}{d+1} G\left(\frac{\vol_d^s(C_K)}{\omega_{d-1}}\right)^{\frac{d}{d+1}}
        = (\tan\alpha_K)^{-\frac{d}{d+1}}\, G\left(\frac{\vol_d^s(K)}{\omega_{d-1}}\right)^{\frac{d}{d+1}}.
    \end{align*}
    By $(1-\varepsilon)\Omega^s_1(C_K)\leq \Omega^s_1(K)$ and the above, we conclude
    \begin{equation*}
        (1-\varepsilon)^{d+1} \leq \frac{1}{1+\beta\Delta_2(K)^2}.
    \end{equation*}
    This,  together with \eqref{eqn:Delta_2_lower}, yields
    \begin{equation*}
        \frac{\vol_d^s(K\triangle C_K(\bo))}{\omega_{d-1}} 
        \leq \Delta_2(K) 
        \leq \sqrt{\frac{1}{\beta} \left[\frac{1}{(1-\varepsilon)^{d+1}}-1\right]}
        \leq \sqrt{\frac{3(d+3)\varepsilon}{\beta}},
    \end{equation*}
    where in the last inequality we used that for $\varepsilon\leq \frac{1}{d+1}$ and $d\geq 3$
    \begin{equation*}
        Z(\varepsilon):=\frac{1}{(1-\varepsilon)^{d+1}}-1-3(d+3)\varepsilon\leq 0. 
    \end{equation*}
    This holds  as  $Z(0)=0$ and
    \begin{equation*}
        Z'(\varepsilon) = \frac{d+1}{(1-\varepsilon)^{d+2}} - 3(d+3) \leq d (1+1/d)^{d+3} - 3(d+3)<0,
    \end{equation*}
    for all $d\geq 3$ and $\varepsilon\leq \frac{1}{d+1}$.
    The theorem now follows since $\sqrt{3(d+3)/\beta}\leq \tau$ and $H(\vol_d^s(K)/\omega_{d-1}) =(\tan\alpha_K)^{d}\leq (d(d-2))^{d/2}$.
\end{proof}

\section{The \texorpdfstring{$L_p$}{Lp}-floating area on the sphere}

In the Euclidean setting, the $L_p$-affine surface area was introduced as  natural extension of the affine surface by Lutwak \cite{Lutwak:1996}, and in \cite{Hug1:1996, Ludwig:2010, SW:2004, WY:2008}.
We introduce  a corresponding quantity in the spherical setting.  
See also Section \ref{sec:real_limit}.
For $p\in\R\setminus\{-d\}$ and a proper spherical convex body $K\subset \S^d$, where we further assume that $K$ is of class $\cC^2_+$ if $p<0$, we define the \emph{$L_p$-floating area} by
\begin{equation} \label{lpfloatarea}
    \Omega_p^s(K) = \int_{\bd K} H_{d-1}^s(K,\bu)^{\frac{p}{d+p}}\, \vol_{\bd K}^s(\dint \bu).
\end{equation}
We note that $\Omega_0^s(K) = P^s(K) = \vol_{\bd K}^s(\bd K)$.  We additionally set
\begin{equation*}
    \Omega_{\infty}^s(K) := \int_{\bd K} H^s_{d-1}(K,u) \, \vol_{\bd K}^s(\dint u).
\end{equation*}
For a geodesic ball we have
\begin{equation}\label{eqn:ball_p_float}
    \Omega_p^s(C_{\bo}(\alpha)) = \omega_{d-1} (\cos\alpha)^{\frac{p(d-1)}{d+p}} (\sin\alpha)^{\frac{d(d-1)}{d+p}},
\end{equation}
and $\Omega_\infty^s(C_{\bo}(\alpha)) = P^s(C_{\bo}(\alpha)^*) = P^s(C_{\bo}(\frac{\pi}{2}-\alpha)) = \omega_{d-1} (\cos\alpha)^{d-1}$.

Notice that $\Omega_{\infty}^s(K)$ is, up to normalization, the total measure of the absolutely continuous part $C_0^a(K,\cdot)$ of the $0$th-curvature measure $C_0(K,\cdot)$, see \cite[p.\ 256]{SW:2008}, i.e.,
\begin{equation}\label{eqn:polar_perimter}
    \frac{\Omega_{\infty}^s(K)}{2\omega_{d-1}} = C_0^a(K,\S^d) \leq C_0(K,\S^d) =\frac{P^s(K^*)}{2\omega_{d-1}}, 
\end{equation}
where $P^s(K)=\vol_{\bd K}^s(\bd K)$ is the perimeter of $K$, i.e., $(d-1)$-dimensional Hausdorff measure of $\bd K$ and where we used the duality relation, see \cite[Eq.\ (6.51)]{SW:2008}.
Moreover, if $K$ admits a rolling ball, or equivalently if $K$ is of class $\cC^{1,1}$, then $\Omega_{\infty}^s(K) = P^s(K^*)$, see \cite[Sec.~2.1]{BW:2024} for the Euclidean case, which transfers to the spherical case by considering the gnomonic projection of $K$.

\begin{remark}[$L_p$-floating area for $p=1$ and $p=-d/(d+2)$]
    For $p=1$ the $L_p$-floating area is the classical floating area and for $p=-d/(d+2)$ this curvature measure was recently derived in \cite[Thm.\ 1.1]{BW:2024} as the volume derivative of the spherical floating body conjugated by spherical duality. 
\end{remark}

The following generalizes a  duality formula for the $L_p$-affine surface area $\as_p$ which was established by Hug for $p>0$, see \cite[Thm.\ 3.2]{Hug2:1996} and, with different methods,  for $p \in [-\infty, 0)$ by Werner \& Ye \cite{WY:2008}. See also Ludwig \cite{Ludwig:2010} for duality for Orlicz affine surface areas.

\begin{theorem}[Duality Relation] \label{thm:duality}
Let  $d\geq 2$, $p\in\R\setminus\{-d\}$, and let $K\subset\S^d$ be a proper spherical convex body. If $p< 0$, then we additionally assume that $K$ is of class $\cC^2_+$ and if $p\in\{0,+\infty\}$ we assume that $K$ is of class $\cC^{1,1}$. Then
\begin{equation*}   
    \Omega_p^s(K^*) = \Omega_{d^2/p}^s(K).
\end{equation*}
\end{theorem}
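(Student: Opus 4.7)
The plan is to reduce the identity to a change-of-variables argument under the spherical Gauss map $\bn_K^s : \bd K \to \bd K^*$. I first treat the case when $K$ is of class $\cC^2_+$, where this map is a $\cC^1$-diffeomorphism with inverse $\bn_{K^*}^s$. The argument then rests on two identities from spherical convex geometry:
\begin{enumerate}
    \item[(i)] the reciprocal relation for the spherical Gauss--Kronecker curvatures: at paired boundary points $\bu\in\bd K$ and $\bv=\bn_K^s(\bu)\in\bd K^*$,
    \begin{equation*}
        H_{d-1}^s(K,\bu)\cdot H_{d-1}^s(K^*,\bv) = 1;
    \end{equation*}
    \item[(ii)] the Jacobian formula, where $H_{d-1}^s(K,\cdot)$ is the absolute determinant of the spherical Weingarten map,
    \begin{equation*}
        \vol_{\bd K^*}^s(\dint\bv) = H_{d-1}^s(K,\bu)\,\vol_{\bd K}^s(\dint\bu).
    \end{equation*}
\end{enumerate}

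Granting (i) and (ii), the computation is immediate: pushing forward the defining integral of $\Omega_p^s(K^*)$ under $\bn_K^s$ and using $\bv=\bn_K^s(\bu)$,
\begin{align*}
    \Omega_p^s(K^*)
    &= \int_{\bd K^*} H_{d-1}^s(K^*,\bv)^{\frac{p}{d+p}}\,\vol_{\bd K^*}^s(\dint\bv) \\
    &= \int_{\bd K} H_{d-1}^s(K,\bu)^{-\frac{p}{d+p}}\cdot H_{d-1}^s(K,\bu)\,\vol_{\bd K}^s(\dint\bu) \\
    &= \int_{\bd K} H_{d-1}^s(K,\bu)^{\frac{d}{d+p}}\,\vol_{\bd K}^s(\dint\bu) = \Omega_{d^2/p}^s(K),
\end{align*}
where in the last step I used the algebraic identity $\tfrac{d^2/p}{d+d^2/p} = \tfrac{d}{d+p}$. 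The boundary cases $p\in\{0,+\infty\}$ correspond, under the duality $p\leftrightarrow d^2/p$, to the identity $\Omega_\infty^s(K)=P^s(K^*)$ already noted in \eqref{eqn:polar_perimter} for $\cC^{1,1}$ bodies.

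To handle less regular $K$ for $p>0$ (where no $\cC^2_+$ assumption is in the statement), I would extend by approximation: pick a sequence $K_n$ of $\cC^2_+$ bodies with $K_n \to K$ in the Hausdorff metric, which gives $K_n^* \to K^*$ as well, and apply the smooth case on each side. The limit is then taken using the upper semi-continuity of $\Omega_p^s$ and $\Omega_{d^2/p}^s$ together with continuity of the relevant functionals on the approximating sequence, in the spirit of Hug's treatment \cite{Hug1:1996, Hug2:1996} of the Euclidean $L_p$-affine surface area transported to $\S^d$ via the gnomonic projection.

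The main obstacle is verifying identity (i), the reciprocal relation between the two spherical Gauss--Kronecker curvatures. One way is to compute the principal curvatures of $K$ and $K^*$ at paired points in a local chart around $\bu$: using the gnomonic projection $g_\bo$ with $\bo\in\interior K\cap\interior K^*$ and the relation $g_\bo(K^*)=-\overline{K}^\circ$, one can translate the question into the Euclidean polar-duality calculation and account for the metric weight. A direct spherical check is possible by testing on geodesic balls $C_\bo(\alpha)$, whose spherical principal curvatures are all equal to $\cot\alpha$, and which have dual $C_\bo(\tfrac{\pi}{2}-\alpha)$ with principal curvatures $\tan\alpha=1/\cot\alpha$ — thereby confirming the reciprocal pattern in the model case and then propagating to general $\cC^2_+$ bodies. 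Identity (ii) is then essentially the area formula for the spherical Gauss map.
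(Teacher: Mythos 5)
Your computation in the smooth case is exactly the paper's: the paper also uses the map $\sigma_K=-\bn_K:\bd K\to\bd K^*$, the Jacobian identity $J^{\bd K}(-\bn_K)(\bx)=H_{d-1}^s(K,\bx)$, and the reciprocal relation $H_{d-1}^s(K^*,\bn_K(\bx))\,H_{d-1}^s(K,\bx)=1$ (quoted from \cite[Thm.\ 4.12]{BW:2024}), followed by the same exponent arithmetic. So for $\cC^2_+$ bodies there is nothing to add.

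The genuine gap is in your treatment of the case $p>0$ for a general proper convex body, where the theorem imposes no curvature hypothesis. Approximation by $\cC^2_+$ bodies $K_n\to K$ cannot work with upper semi-continuity alone: upper semi-continuity of $\Omega_p^s$ gives only $\Omega_p^s(K^*)\geq\limsup_n\Omega_p^s(K_n^*)$ and $\Omega_{d^2/p}^s(K)\geq\limsup_n\Omega_{d^2/p}^s(K_n)$, and combining these one-sided bounds with the identity $\Omega_p^s(K_n^*)=\Omega_{d^2/p}^s(K_n)$ yields no relation between $\Omega_p^s(K^*)$ and $\Omega_{d^2/p}^s(K)$ in the limit. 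The functionals are genuinely discontinuous (polytopes have vanishing $L_p$-floating area for $p>0$ while approximating any body), so "continuity of the relevant functionals on the approximating sequence" is precisely the assertion that is missing and that you would have to construct a special sequence to guarantee on \emph{both} sides simultaneously; no standard result provides this. The paper avoids approximation altogether: for general $K$ and $p>0$ it works with the a.e.-defined generalized Gauss--Kronecker curvature, restricts the integration to the set of normal boundary points where $H_{d-1}^s(K,\bx)>0$ (which carries the full mass of the integrand), and applies Hug's measure-theoretic transformation formula for the approximate Jacobian of the restricted duality map \cite[Sec.\ 2]{Hug1:1996}. If you replace your approximation step by this direct a.e.\ change of variables, your argument coincides with the paper's. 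Your verification of the reciprocal relation (i) via the gnomonic projection and Euclidean polar duality is a reasonable route, though note that checking it on geodesic balls only confirms the formula in the model case and does not by itself "propagate" to general $\cC^2_+$ bodies; the paper simply cites \cite[Thm.\ 4.12]{BW:2024} for the general a.e.\ statement.
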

\begin{proof}
    This was proven for $p=-d/(d+2)$ in \cite[Thm.\ 1.2 (iii)]{BW:2024} and the proof for general $p$ is analogous. Heuristically speaking, we use the transformation $\sigma_K := -\bn_K : \bd K \to \bd K^*$ which is a bi-Lipschitz if $K$ is of class $\cC^2_+$. For $p>0$ and general $K$ we may restrict to the boundary points where $H_{d-1}^s(K,\bx)>0$ and use the tools developed by Hug \cite[Sec.\ 2]{Hug1:1996} to use the transformation $\sigma_K$ restricted to this subset. In either case one may determine that the (approximate) Jacobian exits for almost every boundary point and is given by $J^{\bd K}(-\bn_K)(\bx) = H_{d-1}^s(K,\bx)$, which yields
    \begin{align*}
        \Omega_p^s(K^*) 
        &= \int_{\bd K^*} H_{d-1}^s(K^*,\bu)^{\frac{p}{d+p}} \, \vol_{\bd K^*}^s(\dint\bu)\\
        &= \int_{\bd K} H_{d-1}^s(K^*,\bn_K(\bx))^{\frac{p}{d+p}} H_{d-1}^s(K,\bx)\,\vol_{\bd K}^s(\dint\bu)\\
        &= \int_{\bd K} H_{d-1}^s(K,\bx)^{\frac{d}{d+p}}\,\vol_{\bd K}^s(\dint\bu) = \Omega_{d^2/p}^s(K),
    \end{align*}
    where in the last equality we used the fact that for almost all $\bx\in\bd K$ we have
    \begin{equation*}
        H_{d-1}^s(K^*,\bn_K(\bx)) H_{d-1}^s(K,\bx) = 1,
    \end{equation*}
    see, for example, \cite[Thm.\ 4.12]{BW:2024}.
\end{proof}
The next theorem describes inequalities and monotonicity behavior for the $L_p$-floating area similar to  the Euclidean $L_p$-affine surface area which 
were proved in \cite{WY:2008}.

\begin{theorem}\label{thm:sphere_inequalities}
    Let $d\geq 2$ and $p,q,r\in\mathbb{R}\setminus\{-d\}$. Let $K\subset \S^d$ be a proper spherical convex body and if $\min\{p,q,r\}<0$ additionally assume that $K$ is of class $\cC^2_+$. 
    If $t:= \frac{(q-r)(d+p)}{(p-r)(d+q)}>1$ and $t':=\frac{t}{t-1}$, then
    \begin{equation*}
        \Omega_p^s(K) \leq \Omega_q^s(K)^{1/t}\, \Omega_r^s(K)^{1/t'}.
    \end{equation*}
    Equality holds if $K$ is a geodesic ball. Moreover, if $K$ is of class $\cC^{1,1}$ and we have equality, then $K$ is a geodesic ball.

    For $r=0$, we further derive that
    \begin{equation}\label{eqn:monotone_inc}
        \left(\frac{\Omega_p^s(K)}{P^s(K)}\right)^{1+\frac{d}{p}} \leq \left(\frac{\Omega_q^s(K)}{P^s(K)}\right)^{1+\frac{d}{q}},
    \end{equation}
    for all $p,q$ such that $0<p<q$ or $p<q<-d$ or $p>0, q<-d$,
    With the same equality conditions as above.
 
    Furthermore, for $r\to \infty$, we find that
    \begin{equation}\label{eqn:monotone}
        \left(\frac{\Omega_p^s(K)}{P^s(K^*)}\right)^{1+\frac{p}{d}} \quad \text{is monotone decreasing for $p\geq 0$.}
    \end{equation}
    
    Finally, for $q\to \infty$ and $r=0$, we derive for all $p>0$ that
    \begin{equation}\label{eqn:p-iso_sphere}
        \Omega_p^s(K) \leq P^s(K)^{\frac{d}{d+p}} P^s(K^*)^{\frac{p}{d+p}}.
    \end{equation}
    and for $p<0$, $p\neq -d$, the inequality \eqref{eqn:p-iso_sphere} is reversed. Equality holds if and only if $K$ is a geodesic ball.
\end{theorem}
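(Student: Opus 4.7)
The plan is to derive all four parts from a single application of Hölder's inequality to the defining integral \eqref{lpfloatarea}, followed by algebraic rearrangement and two carefully handled limits. I decompose
\begin{equation*}
H_{d-1}^s(K,\bu)^{\frac{p}{d+p}} = \bigl(H_{d-1}^s(K,\bu)^{\frac{q}{d+q}}\bigr)^{1/t}\cdot\bigl(H_{d-1}^s(K,\bu)^{\frac{r}{d+r}}\bigr)^{1/t'}
\end{equation*}
and solve the system $\tfrac{1}{t}\cdot\tfrac{q}{d+q}+\tfrac{1}{t'}\cdot\tfrac{r}{d+r}=\tfrac{p}{d+p}$ together with $\tfrac{1}{t}+\tfrac{1}{t'}=1$; a short calculation produces precisely $t=\frac{(q-r)(d+p)}{(p-r)(d+q)}$. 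Under $t>1$, classical Hölder with respect to $\vol_{\bd K}^s$ (restricted to the subset of $\bd K$ on which $H_{d-1}^s(K,\cdot)>0$ when negative exponents occur, in the spirit of \cite{Hug1:1996}) gives Part~1 at once. The equality clause in Hölder forces $H_{d-1}^s(K,\cdot)$ to be constant $\vol_{\bd K}^s$-a.e., and under the $\cC^{1,1}$-hypothesis this characterises geodesic balls.

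For \eqref{eqn:monotone_inc}, I substitute $r=0$ and use $\Omega_0^s(K)=P^s(K)$: the exponent simplifies to $t=\tfrac{q(d+p)}{p(d+q)}$, and a direct sign check verifies $t>1$ in each of the three listed parameter ranges. Raising the resulting inequality to the power $\tfrac{d+p}{p}$ yields \eqref{eqn:monotone_inc}. For \eqref{eqn:monotone}, I pass to the limit $r\to\infty$ in Part~1: the exponent $t$ tends to $\tfrac{d+p}{d+q}$, the condition $t>1$ becomes $p>q$, and $\Omega_r^s(K)\to\Omega_\infty^s(K)\leq P^s(K^*)$ by \eqref{eqn:polar_perimter}. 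Dividing through by $P^s(K^*)$ and rearranging gives the asserted monotonicity for $p\geq q\geq 0$, and specialising further to $q=0$ produces \eqref{eqn:p-iso_sphere} in the case $p>0$ as an immediate corollary.

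The remaining case \eqref{eqn:p-iso_sphere} for $p<0$, $p\neq -d$, requires a separate Hölder setup, since the natural exponents lie outside the classical range. Starting from $P^s(K)=\int_{\bd K} H_{d-1}^s(K,\bu)^{0}\,\vol_{\bd K}^s(\dint\bu)$, I decompose the integrand as $1=H_{d-1}^s(K,\cdot)^{p/d}\cdot H_{d-1}^s(K,\cdot)^{-p/d}$ and apply Hölder with $s=\tfrac{d}{d+p}$, $s'=-\tfrac{d}{p}$, which satisfy $1/s+1/s'=1$. For $-d<p<0$ both $s,s'>1$, so classical Hölder gives $P^s(K)\leq \Omega_p^s(K)^{(d+p)/d}\,\Omega_\infty^s(K)^{-p/d}$; combined with $\Omega_\infty^s(K)\leq P^s(K^*)$ and the positive exponent $-p/d$, this rearranges into the reverse of \eqref{eqn:p-iso_sphere}. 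For $p<-d$ one has $s<0$ and $s'\in(0,1)$, so the reverse Hölder inequality yields $P^s(K)\geq \Omega_p^s(K)^{(d+p)/d}\,\Omega_\infty^s(K)^{-p/d}$. The main obstacle arises in this last case: one now needs the identity $\Omega_\infty^s(K)=P^s(K^*)$, not merely the bound, because raising to the negative power $d/(d+p)$ at the end would otherwise propagate in the wrong direction. The $\cC^2_+$-hypothesis (which in particular implies $\cC^{1,1}$) precisely supplies this identity through the discussion following \eqref{eqn:polar_perimter}, closing the argument and leaving the same equality characterisation as before.
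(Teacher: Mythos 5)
Your argument is correct, and its core --- the three-term H\"older decomposition of $H_{d-1}^s(K,\cdot)^{p/(d+p)}$ with the stated exponent $t$, and the sign checks that $t>1$ in the listed parameter ranges --- is exactly the paper's proof. You diverge in two places, both legitimately. First, for \eqref{eqn:monotone} the paper does not pass to the limit $r\to\infty$ in H\"older; it applies \eqref{eqn:monotone_inc} to $K^*$ and converts via the duality $\Omega_p^s(K^*)=\Omega_{d^2/p}^s(K)$ of Theorem \ref{thm:duality}. Your route (dominated convergence to get $\Omega_r^s(K)\to\Omega_\infty^s(K)$, then $\Omega_\infty^s(K)\leq P^s(K^*)$ applied with a positive exponent) is self-contained and avoids the duality machinery altogether, at the cost of one limit argument. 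Second, for $p<0$ the paper merely asserts that \eqref{eqn:p-iso_sphere} reverses; your explicit reverse-H\"older computation --- in particular the observation that for $p<-d$ one needs the exact identity $\Omega_\infty^s(K)=P^s(K^*)$ rather than the one-sided bound \eqref{eqn:polar_perimter}, which the $\cC^2_+$ hypothesis in force for $p<0$ supplies --- is a useful filling-in of a step the paper leaves implicit.

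Two points in your equality analysis are thinner than they should be. The implication ``$H_{d-1}^s(K,\cdot)$ constant a.e.\ and $K$ of class $\cC^{1,1}$ implies $K$ is a geodesic ball'' is not a formality: it is a rigidity theorem, which the paper obtains by noting that for $\cC^{1,1}$ bodies the curvature measure $C_0(K,\cdot)$ equals $\eta\,\vol_{\bd K}^s$ and then invoking Kohlmann \cite{Kohlmann:1998}; you should cite or prove this rather than assert it. More substantively, the ``only if'' in the equality case of \eqref{eqn:p-iso_sphere} for $p>0$ is claimed for arbitrary proper convex bodies with no smoothness hypothesis, and your closing sentence does not reach that: the paper's argument is that equality in \eqref{eqn:p-iso_sphere} forces $\Omega_\infty^s(K)=P^s(K^*)$, hence absolute continuity of $C_0(K,\cdot)$ with respect to $\vol_{\bd K}^s$ by \eqref{eqn:polar_perimter}, and only then does the rigidity theorem apply. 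As written, your proof establishes that equality characterisation only under the additional $\cC^{1,1}$ assumption.
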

\begin{proof}\,
 We set $t:= \frac{(q-r)(d+p)}{(p-r)(d+q)}$ and $t':= \frac{t}{t-1} = \frac{(q-r)(d+p)}{(q-p)(d+r)}$. Since $\frac{1}{t}+\frac{1}{t'}=1$, we may use Hölder's inequality to obtain
  \begin{align*}
        \Omega_p^s(K) 
        &= \int_{\bd K} H_{d-1}^s(K,\bu)^{\frac{p}{d+p}}\, \Vol_{\bd K}^s(\dint \bu) \\
        &= \int_{\bd K} \left(H_{d-1}^s(K,\bu)^{\frac{r}{d+r}}\right)^{1/t'} \left(H_{d-1}^s(K,\bu)^{\frac{q}{d+q}}\right)^{1/t} \, \Vol_{\bd K}^s(\dint \bu) \\
        &\leq \left( \int_{\bd K} H_{d-1}^s(K,\bu)^{\frac{r}{d+r}}\, \Vol_{\bd K}^s(\dint u) \right)^{1/t'}  
            \left( \int_{\bd K} H_{d-1}^s(K,\bu)^{\frac{q}{d+q}}\, \Vol_{\bd K}^s(\dint \bu) \right)^{1/t}\\
        &= \Omega_r^s(K)^{1/t'} \,  \Omega_q^s(K)^{1/t}.
\end{align*}
If $K=C(\alpha)$ is a geodesic ball, then $H_{d-1}^s(K,\cdot)\equiv(\tan\alpha)^{-(d-1)}$ and we have equality.
On the other hand, if $K$ is of class $\cC^{1,1}$ and we have equality, then by the equality conditions of Hölder's inequality we conclude that $H_{d-1}^s(K,\bu)=\eta>0$ for almost every $\bu\in\bd K$. Since $K$ is of class $\cC^{1,1}$ the curvature measures $C_0(K,\cdot)$ is absolutely continuous with respect to $\Vol_{\bd K}$ and for $A\subset \bd K$ Borel we have
\begin{equation*}
    C_0(K,A) = \int_{A} H_{d-1}^s(K,\bx) \,\vol_{\bd K}^s(\dint \bx) = \eta \vol_{\bd K}^s(A).
\end{equation*}
Then by a result of Kohlmann \cite[Thm.\ 2]{Kohlmann:1998} we may conclude that $K$ is a geodesic ball.

Using the duality relations, see Theorem \ref{thm:duality}, we derive \eqref{eqn:monotone} from \eqref{eqn:monotone_inc}.

For the equality case in \eqref{eqn:p-iso_sphere} we do not need to a priori assume that $K$ is of class $\cC^{1,1}$, since the equality case also implies that
$\Omega_{\infty}^s(K)=P^s(K^*)$, which yields that $C_0(K,\cdot)$ is a absolutely continuous with respect to $\vol_{\bd K}^s$, see \eqref{eqn:polar_perimter} and compare also \cite[Thm.\ 7.6]{BW:2016}. Then as before we may conclude that $K$ is a geodesic ball.
\end{proof}

We first consider now an upper bound of $\Omega_p^s$ in terms of the volume radius $\alpha_K$, i.e, we want to find conditions such that $\Omega_p^s(K)\leq \Omega_p^s(C_K)$. Observe that such an upper bound can not hold true without some additional conditions on $K$ for $p\to 0^+$, since by the isoperimetric inequality we have for all all convex bodies $K\subset \S^d$, 
\begin{equation*}
    \Omega_0^s(K) = P^s(K) \geq P^s(C_K)=\Omega_0^s(C_K).
\end{equation*}

\begin{corollary}[Upper Bound by the Volume]\label{cor:p-float_ineq}
    Let $d\geq 3$ and let $K\subset \S^d$ be a proper spherical convex body of class $\cC^2_+$ and such that with respect to the GHS-center $\bo$ we have $\overline{K}=g_{\bo}(K)\subset \sqrt{d(d-2)} B_2^d$. Then for $p\geq 1$
    \begin{equation*}
        \Omega_p^s(K) \leq \Omega_p^s(C_K).
    \end{equation*}
    with equality if and only if $K$ is a geodesic ball.
    Furthermore, for $0< q\leq d^2$ and $p\geq q$ if $\overline{K}\supset \sqrt{d/q} B_2^d$, then
    \begin{equation*}
        \Omega_p^s(K)\leq \Omega_p^s(C_K).
    \end{equation*}
\end{corollary}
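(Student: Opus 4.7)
The plan is to prove both parts by Hölder interpolation using Theorem \ref{thm:sphere_inequalities} anchored at two endpoint inequalities: at $p=1$ (resp.\ $p=q$) and at $p=\infty$. At $p=\infty$, identify $\Omega_\infty^s(K)=P^s(K^*)$, which is valid since $\cC^2_+\subset \cC^{1,1}$, and obtain $\Omega_\infty^s(K)\leq P^s(C_K^*)=\Omega_\infty^s(C_K)$ from the dual isoperimetric inequality of Proposition \ref{cor:polar_iso}. For a geodesic ball the spherical Gauss--Kronecker curvature is constant, so the relevant Hölder step is an equality and the right-hand side of the interpolation factorises into $\Omega_p^s(C_K)$.

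For the first part, apply Theorem \ref{thm:sphere_inequalities} with $r=1$ and upper exponent $\to\infty$ to obtain, for $p\geq 1$,
\begin{equation*}
\Omega_p^s(K)\leq \Omega_1^s(K)^{(d+1)/(d+p)}\,\Omega_\infty^s(K)^{(p-1)/(d+p)}.
\end{equation*}
Since the hypothesis $\overline K\subset \sqrt{d(d-2)}B_2^d$ is exactly that of Theorem \ref{thm:sfa-ineq}, we have $\Omega_1^s(K)\leq \Omega_1^s(C_K)$, and combining with the $p=\infty$ endpoint yields $\Omega_p^s(K)\leq \Omega_p^s(C_K)$. Equality propagates back to equality in Theorem \ref{thm:sfa-ineq}, forcing $K$ to be a geodesic ball.

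For the second part, the same Hölder interpolation with $r=q$ and upper exponent $\to\infty$ reduces matters to establishing the base case $\Omega_q^s(K)\leq \Omega_q^s(C_K)$ for $0<q\leq d^2$. Use the duality of Theorem \ref{thm:duality} to rewrite $\Omega_q^s(K)=\Omega_{d^2/q}^s(K^*)$ with $d^2/q\geq 1$, and apply the already-established first part of the corollary to $K^*$ at parameter $d^2/q$, obtaining $\Omega_{d^2/q}^s(K^*)\leq \Omega_{d^2/q}^s(C_{K^*})$. To compare $C_{K^*}$ with $C_K^*$, observe that $\alpha\mapsto \Omega_{d^2/q}^s(C(\alpha))$ attains its maximum at $\alpha^*=\arctan\sqrt{q/d}$ and is increasing on $[0,\alpha^*]$; the spherical Blaschke--Santaló inequality (Theorem \ref{thm:BS}) gives $\alpha_{K^*}\leq \tfrac{\pi}{2}-\alpha_K$, and the hypothesis $\overline K\supset \sqrt{d/q}B_2^d$ forces $\alpha_K\geq \arctan\sqrt{d/q}$, so $\tfrac{\pi}{2}-\alpha_K\leq \alpha^*$. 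Monotonicity then yields $\Omega_{d^2/q}^s(C_{K^*})\leq \Omega_{d^2/q}^s(C_K^*)=\Omega_q^s(C_K)$, where the last equality is the duality of $\Omega_p^s$ on balls.

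The main delicacy is verifying the hypothesis required to apply the first part of the corollary to $K^*$, namely $g_{\bo(K^*)}(K^*)\subset \sqrt{d(d-2)}B_2^d$. The natural polar translation via $g_{\bo(K)}(K^*)=-\overline K^\circ\subset \sqrt{q/d}B_2^d$ only gives the containment in the projection centred at $\bo(K)$, and generally $\bo(K^*)\neq \bo(K)$; controlling the projective shift between the two GHS-centres is the crux of the argument, and is what dictates the threshold $q\leq d^2$ together with the specific containment bound in the hypothesis.
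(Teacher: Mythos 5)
Your argument follows the paper's route almost verbatim. The first part is exactly the paper's proof: interpolate via the monotonicity \eqref{eqn:monotone} between the $p=1$ case (Theorem \ref{thm:sfa-ineq}) and the endpoint $\Omega_\infty^s(K)=P^s(K^*)\leq P^s(C_K^*)$ from Proposition \ref{cor:polar_iso}. The second part has the same skeleton as well: duality, apply the $p\geq 1$ result to $K^*$, compare $C_{K^*}$ with $C_K^*$ via Theorem \ref{thm:BS} and the unimodality of $\alpha\mapsto\Omega^s_{d^2/q}(C(\alpha))$, then interpolate up to all $p\geq q$. Your ball comparison is phrased on the $d^2/q$-side where the paper argues on the $q$-side, but the two are equivalent under $C(\alpha)^*=C(\tfrac{\pi}{2}-\alpha)$.

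The one step you leave open --- verifying the containment hypothesis needed to apply the $p=1$ result to $K^*$ --- is a genuine gap in your write-up, and your diagnosis of it is off. There is no projective shift between $\bo(K)$ and $\bo(K^*)$ to control: one simply runs the proof of Theorem \ref{thm:lambda_FI} for $K^*$ in the chart centered at $\bo=\bo(K)$ itself. There $g_{\bo}(K^*)=-\overline{K}^\circ\subset\sqrt{q/d}\,B_2^d\subset\sqrt{d(d-2)}\,B_2^d$ by polarity of the hypothesis $\overline{K}\supset\sqrt{d/q}\,B_2^d$, so the concavity of $G_\lambda$ (the only place the containment enters) is available. The only role of the GHS-center in that proof is to guarantee the Blaschke--Santal\'o inequality for the projected body, and for $-\overline{K}^\circ$ this reads $\vol_d^e(\overline{K}^\circ)\,\vol_d^e(\overline{K})\leq\kappa_d^2$, which holds because $\overline{K}$ is centered (equivalently, $-\overline{K}^\circ$ is in Santal\'o position by Theorem \ref{thm:GHS}); no recentering at $\bo(K^*)$ is required. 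Finally, the threshold $q\leq d^2$ is not dictated by any center issue: it is exactly what makes $d^2/q\geq 1$ so that the first part applies to $K^*$, while the radius $\sqrt{d/q}$ is what forces $\tfrac{\pi}{2}-\alpha_K\leq\arctan\sqrt{q/d}$ in your monotonicity step. With the paragraph above replacing your closing remarks, your proof coincides with the paper's.
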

\begin{proof}
    Since the inequality is true for $p=1$, see Theorem \ref{thm:lambda_FI}, the statement follows by \eqref{eqn:monotone}, since
    \begin{align*}
        \frac{\Omega_p^s(K)}{P^s(K^*)} 
        \leq \left(\frac{\Omega_1^s(K)}{P^s(K^*)}\right)^{\frac{d+1}{d+p}}
        \leq \left(\frac{\Omega_1^s(C_K)}{P^s(K^*)}\right)^{\frac{d+1}{d+p}}
    \end{align*}
    and by the dual isoperimetric inequality, see Proposition \ref{cor:polar_iso}, $P^s(K^*) \leq P^s(C_K^*)$ we conclude
    \begin{align*}
        \Omega_p^s(K) \leq \Omega_1^s(C_K)^{\frac{d+1}{d+p}} P^s(K^*)^{\frac{p-1}{d+p}} \leq \Omega_1^s(C_K)^{\frac{d+1}{d+p}}P^s(C_K^*)^{\frac{p-1}{d+p}} =\Omega_p^s(C_K),
    \end{align*}
    where we use \eqref{eqn:ball_p_float} for the last equality.

    For the second statement let $q\leq d^2$ we may use the duality relation and conclude
    \begin{align*}
        \Omega_{q}^s(K) = \Omega^s_{d^2/q}(K^*) \leq \Omega^s_{d^2/q}(C_{K^*}) = \Omega^s_{q}(C_{K^*}^*),
    \end{align*}
    by Theorem \ref{thm:lambda_FI}, since $d^2/q\geq 1$ and 
    \begin{equation*}
        \overline{K}=g_{\bo}(K) = - g_{\bo}(K^*)^\circ \supset (\sqrt{d(d-2)}B_2^d)^\circ = (1/\sqrt{d(d-2)}) B_2^d.
    \end{equation*}
    By the spherical Blaschke--Santaló inequality, see Theorem \eqref{thm:BS}, we further conclude 
    \begin{equation*}
        C_{K^*}^* = C\left(\frac{\pi}{2}-\alpha_{K^*}\right) \supset C(\alpha_K)=C_K.
    \end{equation*}
    Since $\Omega^s_{q}(C(\alpha))$ is monotone decreasing for $\tan \alpha\geq \sqrt{d/q}\geq 1/\sqrt{d(d-2)}$, we derive
    \begin{equation*}
        \Omega_{q}^s(K) \leq \Omega^s_{q}(C_K),
    \end{equation*}
    for $\overline{K}\supset \sqrt{d/q} B_2^d$. The statement then follows for all $p\geq q$ by \eqref{eqn:monotone} as before.
\end{proof}

By the spherical isoperimetric inequality, see \eqref{eqn:iso_rad}, we have that $\alpha_P(K)\geq \alpha_K$, where we recall that $\alpha_P(K)$ is the radius such that 
\begin{equation}
    P^s(C(\alpha_P(K)))=P^s(K).
\end{equation}
Since, by \eqref{eqn:ball_p_float},
\begin{equation*}
    \frac{\Omega_p^s(C(\alpha))}{\omega_{d-1}} = (\cos \alpha)^{\frac{p(d-1)}{d+p}} (\sin\alpha)^{\frac{d(d-1)}{d+p}} = \frac{(\tan \alpha)^{\frac{d(d-1)}{d+p}}}{(1+(\tan\alpha)^2)^{\frac{d-1}{2}}}
\end{equation*}
is decreasing in $\alpha$ for $\tan\alpha \geq \sqrt{d/p}$ we have that $\Omega_p^s(C_K)\geq \Omega_p^s(C(\alpha_P(K)))$ if $\tan \alpha_K \geq \sqrt{d/p}$. Hence $C(\alpha_P(K))$ gives a better bound than $C_K$ in this case. At least if $d$ is odd, we can verify that $C(\alpha_P(K))$ is also an isoperimetric upper bound for $\Omega_p^s$.

\begin{theorem}[Upper Bound by the Perimeter in Odd Dimensions]\label{thm:improved_upper_bound}
    Let $d\geq3$ be odd, $p> 0$ and let $K\subset \S^d$ be a spherical convex body of class $\cC^2_+$. Then
    \begin{equation*}
        \Omega_p^s(K) \leq \Omega_p^s(C(\alpha_P(K))) 
        = P^s(K)^{\frac{d}{d+p}} P^s(C(\alpha_P(K))^*)^{\frac{p}{d+p}}
        = P^s(K)^{\frac{d}{d+p}} \left(\omega_{d-1}^{\frac{2}{d-1}}-P^s(K)^{\frac{2}{d-1}}\right)^{\frac{d-1}{2}\,\frac{p}{d+p}},
    \end{equation*}
    with equality if and only if $K$ is a geodesic ball, i.e., $K=C(\alpha_P(K))$.

    Moreover, if $\tan\alpha_K \geq \sqrt{d/p}$,
    then
    \begin{equation*}
        \Omega_p^s(K) \leq \Omega_p^s(C_K),
    \end{equation*}
    with equality if and only if $K$ is a geodesic ball.
\end{theorem}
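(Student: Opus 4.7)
The plan is to chain together the $L_p$-H\"older-type bound \eqref{eqn:p-iso_sphere} from Theorem \ref{thm:sphere_inequalities} with the dual perimeter inequality (dPI) of Proposition \ref{cor:ppi}. Since $K$ is proper and of class $\cC^2_+$, and since $p>0$, the bound \eqref{eqn:p-iso_sphere} gives
\begin{equation*}
    \Omega_p^s(K) \;\leq\; P^s(K)^{\frac{d}{d+p}}\, P^s(K^*)^{\frac{p}{d+p}}.
\end{equation*}
For $d\geq 3$ odd (with equality when $d=3$), Proposition \ref{cor:ppi} supplies $P^s(K^*)\leq P^s(C(\alpha_P(K))^*)$, and substituting yields
\begin{equation*}
    \Omega_p^s(K) \;\leq\; P^s(K)^{\frac{d}{d+p}}\, P^s(C(\alpha_P(K))^*)^{\frac{p}{d+p}}.
\end{equation*}

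To identify the right-hand side with $\Omega_p^s(C(\alpha_P(K)))$, I would combine \eqref{eqn:ball_p_float} with the elementary identities $P^s(C(\alpha))=\omega_{d-1}(\sin\alpha)^{d-1}$ and $P^s(C(\alpha)^*)=P^s(C(\tfrac{\pi}{2}-\alpha))=\omega_{d-1}(\cos\alpha)^{d-1}$. These factor as $\Omega_p^s(C(\alpha)) = P^s(C(\alpha))^{d/(d+p)}\,P^s(C(\alpha)^*)^{p/(d+p)}$, which at $\alpha=\alpha_P(K)$ matches the bound above because $P^s(C(\alpha_P(K)))=P^s(K)$ by definition of $\alpha_P$. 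The explicit form in terms of $P^s(K)$ then follows by solving $(\sin\alpha_P(K))^{d-1}=P^s(K)/\omega_{d-1}$ for $(\cos\alpha_P(K))^{d-1}$ to obtain $P^s(C(\alpha_P(K))^*) = (\omega_{d-1}^{2/(d-1)}-P^s(K)^{2/(d-1)})^{(d-1)/2}$.

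For the second claim, I would study the real function $\alpha\mapsto \Omega_p^s(C(\alpha))=\omega_{d-1}(\cos\alpha)^{a}(\sin\alpha)^{b}$ with $a=p(d-1)/(d+p)$ and $b=d(d-1)/(d+p)$. Its logarithmic derivative is proportional to $-a\tan\alpha+b\cot\alpha$, vanishing exactly at $\tan\alpha=\sqrt{b/a}=\sqrt{d/p}$, and the function is unimodal, strictly decreasing for $\tan\alpha\geq\sqrt{d/p}$. By the classical isoperimetric inequality \eqref{eqn:iso_rad} one has $\alpha_K\leq\alpha_P(K)$, so the hypothesis $\tan\alpha_K\geq\sqrt{d/p}$ places both radii in the decreasing regime, whence $\Omega_p^s(C(\alpha_P(K)))\leq\Omega_p^s(C(\alpha_K))=\Omega_p^s(C_K)$. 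Combined with the first statement this gives $\Omega_p^s(K)\leq\Omega_p^s(C_K)$.

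For the equality cases, the step invoking \eqref{eqn:p-iso_sphere} already forces $K$ to be a geodesic ball (by the equality clause of Theorem \ref{thm:sphere_inequalities}), and conversely any geodesic ball trivially realizes equality throughout both chains. There is no serious obstacle here beyond bookkeeping: the key observations are (i) the factorization of $\Omega_p^s(C(\alpha))$ into perimeter and dual-perimeter factors, which lets (dPI) feed cleanly into \eqref{eqn:p-iso_sphere}, and (ii) the unimodality of $\alpha\mapsto \Omega_p^s(C(\alpha))$, whose turning point $\tan\alpha=\sqrt{d/p}$ is exactly the threshold appearing in the second hypothesis.
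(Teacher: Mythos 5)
Your proposal is correct and follows essentially the same route as the paper: chain the bound \eqref{eqn:p-iso_sphere} with the dual perimeter inequality (dPI) of Proposition \ref{cor:ppi}, identify the resulting right-hand side with $\Omega_p^s(C(\alpha_P(K)))$ via the factorization $\Omega_p^s(C(\alpha))=P^s(C(\alpha))^{d/(d+p)}P^s(C(\alpha)^*)^{p/(d+p)}$, and deduce the second claim from the monotonicity of $\alpha\mapsto\Omega_p^s(C(\alpha))$ for $\tan\alpha\geq\sqrt{d/p}$ together with $\alpha_P(K)\geq\alpha_K$. The paper's proof is terser (it leaves the factorization and the monotonicity argument to the preceding discussion), but the substance is identical, including the treatment of the equality cases via the equality clause of \eqref{eqn:p-iso_sphere}.
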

\begin{proof}
    From \eqref{eqn:p-iso_sphere} we find
    \begin{equation*}
        \Omega_p^s(K) \leq P^s(K)^{\frac{d}{d+p}} P^s(K^*)^{\frac{p}{d+p}} = \left(\frac{P^s(K^*)}{P^s(C(\alpha_P(K))^*)}\right)^{\frac{p}{d+p}} \Omega_p^s(C(\alpha_P(K))).
    \end{equation*}
    Since $d$ is odd, the statement follows by the dual perimeter inequality (dPI), see Proposition \ref{cor:ppi}.
\end{proof}

\begin{remark}[2-dimensional case]
    Note that the dual perimeter inequality (dPI), see Proposition \ref{cor:ppi}, is reversed for $d=2$, since we have by the isoperimetric inequality and dual isoperimetric inequality (dII), see Proposition \ref{cor:polar_iso}, that
    \begin{equation*}
        P^s(K^*) = P^s(C_K^*) = P^s(C(\alpha_K)^*) \geq P^s(C(\alpha_P(K))^*),
    \end{equation*}
    which is equivalent to
    \begin{equation*}
        \Omega_p^s(C(\alpha_P(K))) \leq P^s(K)^{\frac{2}{2+p}} P^s(K^*)^{\frac{p}{2+p}}.
    \end{equation*}
    Furthermore
    \begin{equation*}
        \Omega_p^s(C_K) = P^s(C_K)^{\frac{2}{2+p}} P^s(C_K^*)^{\frac{p}{2+p}} \leq P^s(K)^{\frac{2}{2+p}} P^s(K^*)^{\frac{p}{2+p}}.
    \end{equation*}
    We also note that $\vol_2^s(K) = 2\pi(1-\cos\alpha_K)$, which yields
    \begin{equation*}
        \Omega_p^s(C_K) 
        = \vol_2^s(K)^{\frac{1}{p+2}} [2\pi - \vol_2^s(K)]^{\frac{p}{p+2}} [4\pi - \vol_2^s(K)]^{\frac{1}{p+2}},
    \end{equation*}
    and $P^s(K) = 2\pi \sin\alpha_P(K)$ gives
    \begin{equation*}
        \Omega_p^s(C(\alpha_P(K))) = P^s(K)^{\frac{2}{p+2}} [4\pi^2 - P^s(K)^2]^{\frac{p}{2(p+2)}}.
    \end{equation*}
    By the monotonicity of $\Omega^s_p(C_{\bo}(\alpha))$, we have
    \begin{equation*}
        \Omega_p^s(C_K)\leq \Omega_p^s(C(\alpha_P(K))) \leq P^s(K)^{\frac{2}{p+2}} P^s(K^*)^{\frac{p}{2+p}} \qquad \text{if $\tan\alpha_P(K)\leq \sqrt{2/p}$},
    \end{equation*}
    and
    \begin{equation*}
        \Omega_p^s(C(\alpha_P(K)))\leq \Omega_p^s(C_K) \leq P^s(K)^{\frac{2}{p+2}} P^s(K^*)^{\frac{p}{2+p}} \qquad \text{if $\tan\alpha_K\geq \sqrt{2/p}$}.
    \end{equation*}
\end{remark}

\begin{remark}[Is there a better bound for $\Omega_p^s$?]
    We have seen bounds of $\Omega_p^s$ in terms of the volume by $\Omega_p^s(C_K)$ and in terms of the perimeter $\Omega_p^s(C(\alpha_P(K))$. Depending on $K$, either bound could be better, and one might wonder if there is some natural bound smaller than both expressions. For example, we might consider
    \begin{equation*}
        P^s(C_K)^{\frac{d}{p+d}} P^s(C_{K^*})^{\frac{p}{p+d}} \leq \min\{\Omega_p^s(C_K),\Omega_p^s(C(\alpha_P(K))), P^s(K)^{\frac{d}{p+d}} P^s(K^*)^{\frac{p}{d+p}} \}.
    \end{equation*}
    Then a possible stronger form of the isoperimetric inequality for $\Omega_p^s$ would be
    \begin{equation}\label{eqn:strong_ineq}
        \Omega_p^s(K)\overset{?}{\leq} P^s(C_K)^{\frac{d}{d+p}} P^s(C_{K^*})^{\frac{p}{d+p}}.
    \end{equation}
    Note that \eqref{eqn:strong_ineq} holds true for $p\to +\infty$ asymptotically, since for $K$ of class $\cC^2_+$ we have $\Omega_\infty^s(K)= P^s(K^*)\leq P^s(C_K^*)$ by the dual isoperimetric inequality, see Proposition \ref{cor:polar_iso}. 
    If \eqref{eqn:strong_ineq} is true for some $p>0$, then it also holds for all $q\geq p$. This follows since by \eqref{eqn:monotone} and the dual isoperimetric inequality (dII), we find
    \begin{equation*}
        \Omega_q^s(K)
        \leq P^s(K^*) \left(\frac{\Omega_p^s(K)}{P^s(K^*)}\right)^{\frac{d+p}{d+q}} 
        \overset{\eqref{eqn:strong_ineq}}{\leq} P^s(C_K)^{\frac{d}{d+q}} P^s(C_K^*)^{\frac{p}{d+q}} P^s(K^*)^{\frac{q-p}{d+q}} \leq P^s(C_K)^{\frac{d}{d+q}} P^s(C_K^*)^{\frac{q}{d+q}}.
    \end{equation*}
    It therefore seems natural to ask if there is a critical value $q\in(0,+\infty)$ such that \eqref{eqn:strong_ineq} holds true for all $p\geq q$ and all proper spherical convex bodies $K\subset\S^d$.
\end{remark}

\section{Entropy of Euclidean convex bodies}\label{sec:entropy}

In this section we recall several notions of entropy and show how they are related to our investigations. For instance, a notion of 
entropy on convex bodies appears in the study of flows and evolution of hypersurfaces, see e.g., the  works by  Andrews \cite{Andrews:1997, Andrews:1999}, 
 Chow \cite{Chow:1991}, Hamilton \cite{Hamiltion:1994}, Daskalopoulos \& Lee \cite{Daskalopoulos:2004}, and   Guan \& Ni \cite{GN:2017}. 

Another notion of entropy was motivated by the work \cite{PW:2012}, where for a convex body $K$ in $\mathbb{R}^d$ of class $\cC^{1,1}$ that contains the origin in the interior, a centro-affine invariant \emph{entropy power functional} $\cE(K)$ was introduced as
\begin{align*}
  -\log \cE(K) 
    &= \frac{1}{d\vol_d^e(K^\circ)} \int_{\bd K} \frac{H_{d-1}^e(K,\bx)}{(\bx \cdot\bn_K(\bx))^{d+1}} \left[\log \frac{H_{d-1}^e(K,\bx)}{(\bx \cdot \bn_K(\bx))^{d+1}} \right] 
     \bx\cdot \bn_K(\bx) \, \vol_{\bd K}^e(\dint\bx)\\
    &=  \frac{1}{\vol_d^e(K^\circ)} \int_{\bd K}  \kappa_0(K,\bx) \left[\log \kappa_0(K,\bx)\right]\, V_K(\dint\bx),
\end{align*}
where $\kappa_0(K,\bx)=\frac{H_{d-1}^e(K,\bx)}{(\bx \cdot \bn_K(\bx))^{d+1}}$ is the \emph{centro-affine curvature}  and $V_K(\dint\bx)=(1/d) (\bx \cdot \bn_K(\bx)) \, \vol_{\bd K}^e(\dint\bx)$ is the centro-affine invariant \emph{cone volume measure}, see \cite{Stancu:2012} for important centro-affine notions on convex bodies.  We then define the \emph{centro-affine entropy functional} $E_{PW}$ by
\begin{equation}\label{def:PW}
    E_{PW}(K)  :=   -\log \cE(K).
\end{equation}
For general $K$ without assumptions on the curvature the centro-affine entropy is defined by
\begin{equation}
    E_{PW}(K) = -\frac{1}{\vol_d^e(K^\circ)} \int_{\bd K^\circ} [\log \kappa_0(K^\circ,\bx)]\, V_{K^\circ}(\dint\bx),
\end{equation}
where we note that this is equivalent to \eqref{def:PW} if $K$ is of class $\cC^{1,1}$ by \cite[Lem.\ 2.4]{BW:2024}.
For a centered Euclidean ball of radius $r>0$ we have
\begin{equation}
    E_{PW}(rB_2^d) = -2d \log r.
\end{equation}
We have 
\begin{equation}\label{eqn:InfoIneq}
    E_{PW}(K)  \geq -\log \frac{\vol_d^e(K)}{\vol_d^e(K^\circ)}, 
\end{equation}
which is called \emph{Information Inequality} in \cite{PW:2012}. We note that this inequality implies that
\begin{equation*}
    E_{PW}(K)+E_{PW}(K^\circ) \geq 0.
\end{equation*}
Moreover, if $\vol_d^e(K^\circ)\geq \kappa_d$ and $K$ is centered, then by the Blaschke--Santaló inequality, $E_{PW}(K)\geq 0$.

\begin{remark}[Conjectured centro-affine entropy inequality]
    As in the spherical space, see Remark \ref{rmk:conjecture_entropy}, we conjecture that for the entropy power functional we have
    \begin{equation}\label{eqn:centro_entropy_conj}
        \cE(K) \overset{?}\leq \cE(B_K),
    \end{equation}
    where $B_K$ is a centered Euclidean ball with the same volume as $K$. 
    Note that the this would imply that 
    \begin{equation}
        E_{PW}(K) \overset{?}{\geq} E_{PW}(B_K) = - 2 \log \frac{\vol_d^e(K)}{\kappa_d} \geq - \log \frac{\vol_d^e(K)}{\vol_d^e(K^\circ)},
    \end{equation}
    where the second inequality follows by the Blaschke--Santaló inequality. 
    Thus, the conjectured entropy inequality would improve the bound \eqref{eqn:InfoIneq}. In particular, it would follow that if $\vol_d^e(K)\leq\kappa_d$, then $E_{PW}(K)\geq 0$.
\end{remark}

In \cite{Chow:1991}, Chow defines the Gaussian entropy functional $E_C(K)$ for a convex body $K$ in $\mathbb{R}^d$ of class $\cC^2_+$, by
\begin{align*}
    E_C(K) 
    &= \frac{1}{\omega_{d-1}} \int_{\bd K} H_{d-1}^e(K,\bx) [\log H_{d-1}^e(K,\bx)] \, \vol_{\bd K}^e(\dint\bx)\\
    &= -\frac{1}{\omega_{d-1}} \int_{\S^{d-1}} [\log f_K(\bu)] \, \dint\bu, 
\end{align*}
where $f_K(\bu)$ is the curvature function of $K$, i.e., the reciprocal of the Gauss curvature at that point $\bx \in \bd K$ that has $\bu$ as outer normal.
By Jensen's inequality and the affine isoperimetric inequality (\ref{asa-inequality}), 
\begin{align}
    E_C(K) &\geq 
     - \frac{d+1}{d} \log \left(\frac{\as_1(K)}{\omega_{d-1}}\right)
    \geq 
    -\frac{d-1}{d} \log \frac{\vol_d^e(K)}{\kappa_d}.\label{eqn:CHOW}
\end{align}
So if $\vol_d^e(K)\leq \kappa_d=\omega_{d-1}/d$, then $E_C(K)\geq 0$.
Moreover, if $K$ is centered, then
\begin{equation*}
    E_C(K)+E_C(K^\circ) \geq -\frac{d-1}{d} \log\left(\frac{\vol_d^e(K)\vol_d^e(K^\circ)}{\kappa_d^2}\right) \geq 0.
\end{equation*}

Yet another entropy functional for convex bodies in $\mathbb{R}^d$ was defined by Andrews in \cite{Andrews:1997}, 
extending to general convex bodies  a definition given by Fiery \cite{Fiery:1977}  in the  symmetric case. Subsequently it was used by
Guan \& Ni \cite{GN:2017}.  It is defined as follows, 
\begin{align*}
    E_{h}(K) 
    &= \max_{\bz\in K} \frac{1}{\omega_{d-1}} \int_{\S^{d-1}} [\log h_{K-\bz}(\bu)] \, \dint \bu\\
    &= \max_{\bz\in K} \frac{1}{\omega_{d-1}} \int_{\bd K} H_{d-1}^e(K,\bx) \log (( \bx-\bz) \cdot \bn_K(\bx)) \, \vol_{\bd K}^e(\dint \bx).
\end{align*}
It was observed in [Prop.\ 1.1, Guan \& Ni] that  $E_h(K) \geq 0$. In general we have
\begin{equation}\label{eqn:GNIneq}
    E_h(K) \geq \frac{1}{d} \log \frac{\vol_d^e(K)}{\kappa_d},
\end{equation}
and thus
\begin{equation*}
    E_h(K)+E_h(K^\circ) \geq \frac{1}{d} \log\left(\frac{\vol_d^e(K)\vol_d^e(K^\circ)}{\kappa_d^2}\right).
\end{equation*}
In [Prop.\ 3.1, Guan \& Ni] it is shown that 
\begin{equation}\label{eqn:GN}
    E_C(K) \geq E_h(K) - \log \frac{\vol_d^e(K)}{\kappa_d} \geq -\frac{d-1}{d} \log\frac{\vol_d^e(K)}{\kappa_d}.
\end{equation}
Thus \eqref{eqn:GN} improves the bound \eqref{eqn:CHOW} and exhibits a relation between $E_C$ and $E_h$.  Moreover, the entropies $E_{PW}$, $E_C$ and $E_h$ are related as follows
\begin{align*}
    \frac{1}{\cE(K)}\geq \frac{\vol_d^e(K^\circ)}{\vol_d^e(K)} 
    &= \frac{1}{d\vol_d^e(K)} \int_{\bd K} \frac{H_{d-1}^e(K,\bx)}{(\bx \cdot \bn_{K}(\bx))^{d}}
         \, \vol_{\bd K}(\dint\bx)\\
    &\geq \frac{\kappa_d}{\vol_d^e(K)} \exp\left( \frac{-1}{\kappa_d}\int_{\bd K} [\log \bx \cdot \bn_{K}(\bx)] H_{d-1}^e(K,\bx) \, \vol_{\bd K}^e(\dint\bx)\right)\\
    & \geq \frac{\kappa_d}{\vol_d^e(K)} \exp[-d E_h(K)].
\end{align*}
Therefore
\begin{equation*}
    E_{PW}(K) \geq -dE_h(K) - \log \frac{\vol_d^e(K)}{\kappa_d} \geq -dE_C(K) - (d+1) \log\frac{\vol_d^e(K)}{\kappa_d}.
\end{equation*}
Thus $E_C(K)$ and $E_h(K)$ give lower bounds (where $E_h$ gives a better bound) for $E_{PW}(K)$, but the conjectured inequality \eqref{eqn:centro_entropy_conj} would be a better, and potentially optimal, lower bound, since by \eqref{eqn:GNIneq}
\begin{equation*}
    E_{PW}(K) \overset{?}\geq -2 \log \frac{\vol_d^e(K)}{\kappa_d} \geq -dE_h(K) - \log\frac{\vol_d^e(K)}{\kappa_d}.
\end{equation*}

\section{Entropy of spherical convex bodies}\label{sec:entropy_sphere}

Entropy functionals are important  in many areas of mathematics and applications, for instance in  probability theory, information theory, and many more.
We will show in this section that a notion of entropy naturally also appears in our context.

Let $d\geq 2$ and let $K\subset \S^d$ be a proper spherical convex body. 
By Theorem \ref{thm:sphere_inequalities} we find that $\left(\frac{\Omega_q^s(K^*)}{P^s(K^*)}\right)^{1+\frac{d}{q}}$ 
is monotone increasing for $q\geq 0$. We therefore define
\begin{equation}\label{def:entropy}
    \cE^s(K) := \lim_{q\to 0^+} \left(\frac{\Omega_q^s(K^*)}{P^s(K^*)}\right)^{1+\frac{d}{q}}.
\end{equation}
Note that if $K$ is a spherical polytope, then $\cE^s(K)=0$. Furthermore, for all $q\geq 0$,
\begin{equation}\label{eqn:entropy_bound}
    \cE^s(K) \leq \left(\frac{\Omega_q^s(K^*)}{P^s(K^*)}\right)^{1+\frac{d}{q}} \leq \frac{P^s(K)}{P^s(K^*)}.
\end{equation}
If $K$ is of class $\cC^{1,1}$, then $P^s(K^*)=\Omega_{\infty}^s(K)$ and by Theorem \ref{thm:duality} we also have that
\begin{equation*}
    \cE^s(K) = \lim_{p\to\infty} \left(\frac{\Omega_p^s(K)}{\Omega_\infty^s(K)}\right)^{1+\frac{p}{d}}.
\end{equation*}
For a spherical cap $C(\alpha)$, with $\alpha\in (0,\frac{\pi}{2})$, we have
\begin{equation*}
    \cE^s(C(\alpha)) 
    = \left(\frac{\Omega_p^s(C(\alpha))}{P^s(C(\alpha)^*)}\right)^{1+\frac{p}{d}} 
    = \frac{P^s(C(\alpha))}{P^s(C(\alpha)^*)} 
    = (\tan\alpha)^{d-1},
\end{equation*}
for all $p\geq 0$.

We call $\cE^s(K)$ the \emph{(spherical) entropy power functional}. This is motivated  by the following representation.
\begin{theorem}[Spherical Curvature Entropy]\label{thm:KL}
    Let $d\geq 2$ and let $K\subset \S^d$ be a proper spherical convex body. Then
    \begin{equation*}
        E^s(K) := -\log \cE^s(K) = -\frac{1}{P^s(K^*)} \int_{\bd K^*} \log H_{d-1}^s(K^*,\bu) \, \vol_{\bd K^*}^s(\dint\bu).
    \end{equation*}
    If $K$ is of class $\cC^{1,1}$, then also
    \begin{equation}\label{eqn:entropy_curvature}
        E^s(K)= \frac{1}{P^s(K^*)} \int_{\bd K} H_{d-1}^s(K,\bu) \left[\log H_{d-1}^s(K,\bu)\right] \, \Vol_{\bd K}^s(\dint \bu). 
    \end{equation}
    We call $E^s(K)$ the \emph{spherical curvature entropy} of $K$.
\end{theorem}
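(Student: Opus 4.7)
The strategy is to recognize the limit defining $\cE^s(K)$ as a classical $L^\alpha$-mean converging to a geometric mean, and then to invoke the spherical duality change of variables to rewrite the resulting integral on $\bd K^*$ as an integral over $\bd K$.

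First, I would introduce the substitution $\alpha := q/(d+q)$, so that $1 + d/q = 1/\alpha$ and $\alpha \to 0^+$ as $q\to 0^+$, and the probability measure $\mu_{K^*} := \vol_{\bd K^*}^s/P^s(K^*)$ on $\bd K^*$. With this notation,
\begin{equation*}
    \left( \frac{\Omega_q^s(K^*)}{P^s(K^*)} \right)^{1+d/q}
    = \left( \int_{\bd K^*} H_{d-1}^s(K^*,\bu)^{\alpha} \, \mu_{K^*}(\dint\bu) \right)^{1/\alpha},
\end{equation*}
which is precisely the $L^\alpha$-mean of the Gauss--Kronecker curvature with respect to $\mu_{K^*}$. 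The classical $L^p$-to-geometric-mean convergence identifies the limit as $\alpha\to 0^+$ with the geometric mean $\exp\!\left(\int_{\bd K^*} \log H_{d-1}^s(K^*,\bu)\,\mu_{K^*}(\dint\bu)\right)$, with the convention that this equals $0$ when $\log H_{d-1}^s(K^*,\cdot)$ is not integrable (e.g.\ when $K$ is a polytope so that $H_{d-1}^s(K^*,\cdot)=0$ on a set of positive $\vol_{\bd K^*}^s$-measure). Taking $-\log$ on both sides yields the first formula for $E^s(K)$.

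To make the geometric-mean convergence rigorous, I would establish the elementary pointwise inequalities
\begin{equation*}
    1+\alpha \log H \leq H^{\alpha} \leq 1 + \alpha H^{\alpha} \log H \qquad (H>0,\ \alpha>0),
\end{equation*}
the first from convexity of $t\mapsto H^t$ at $t=0$, and the second from writing $H^{\alpha}-1 = \int_0^{\alpha} H^t \log H\,\dint t$ and exploiting the monotonicity of $t\mapsto H^t$ in both regimes $H\geq 1$ and $0<H<1$. Integrating against $\mu_{K^*}$ and dividing by $\alpha$ yields the sandwich
\begin{equation*}
    \int \log H_{d-1}^s(K^*,\bu)\,\mu_{K^*}(\dint\bu)
    \leq \frac{1}{\alpha}\left(\tfrac{\Omega_q^s(K^*)}{P^s(K^*)}-1\right)
    \leq \int H_{d-1}^s(K^*,\bu)^{\alpha} \log H_{d-1}^s(K^*,\bu) \,\mu_{K^*}(\dint\bu).
\end{equation*}
The monotonicity from Theorem \ref{thm:sphere_inequalities} combined with the bound $H^{\alpha}\log H \leq C_\epsilon H^{\alpha+\epsilon}$ for small $\epsilon>0$ gives uniform control to apply dominated convergence on the right-hand side as $\alpha\to 0^+$, and then the elementary expansion $\log(1+x)=x+O(x^2)$ converts the estimate on $(F(q)-1)/\alpha$ into the desired limit of $\alpha^{-1}\log F(q)$, which is precisely $\log\cE^s(K)$.

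For the second formula, I would invoke the duality change of variables used in Theorem \ref{thm:duality}. When $K$ is of class $\cC^{1,1}$, the spherical Gauss map $\sigma_K = -\bn_K : \bd K \to \bd K^*$ is Lipschitz with (approximate) Jacobian $H_{d-1}^s(K,\bx)$, and the pairing identity $H_{d-1}^s(K^*,\sigma_K(\bx))\,H_{d-1}^s(K,\bx) = 1$ holds $\vol_{\bd K}^s$-almost everywhere. Substituting gives
\begin{equation*}
    \int_{\bd K^*} \log H_{d-1}^s(K^*,\bu)\, \vol_{\bd K^*}^s(\dint\bu)
    = \int_{\bd K} \big[\log H_{d-1}^s(K^*,\sigma_K(\bx))\big] H_{d-1}^s(K,\bx) \vol_{\bd K}^s(\dint\bx)
    = -\int_{\bd K} H_{d-1}^s(K,\bx)\log H_{d-1}^s(K,\bx)\, \vol_{\bd K}^s(\dint\bx),
\end{equation*}
and division by $-P^s(K^*)$ produces \eqref{eqn:entropy_curvature}. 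The principal technical obstacle is the rigorous passage to the geometric mean for arbitrary proper spherical convex bodies, since $H_{d-1}^s(K^*,\cdot)$ may vanish on large sets or blow up near corners of $K^*$; the monotonicity in Theorem \ref{thm:sphere_inequalities} together with the sandwich inequality above is the decisive tool.
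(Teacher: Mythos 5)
Your proposal is correct and reaches both formulas, but it implements the key limit differently from the paper. The paper computes $\lim_{q\to 0^+}(1+\tfrac{d}{q})\log\frac{\Omega_q^s(K^*)}{P^s(K^*)}$ by L'Hospital's rule, differentiating $\Omega_q^s(K^*)$ in $q$ and then passing the limit $q\to 0^+$ inside the integral without further comment; you instead recognise $\bigl(\Omega_q^s(K^*)/P^s(K^*)\bigr)^{1+d/q}$ as an $L^\alpha$-mean with $\alpha=q/(d+q)$ and prove the convergence to the geometric mean via the sandwich $1+\alpha\log H\leq H^\alpha\leq 1+\alpha H^\alpha\log H$. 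Both inequalities check out (the second by splitting $H\geq 1$ and $H<1$ in $H^\alpha-1=\int_0^\alpha H^t\log H\,\dint t$), and your route has the advantage of making the limit--integral interchange explicit and of handling cleanly the degenerate cases where $H^s_{d-1}(K^*,\cdot)$ vanishes on a set of positive measure (so $\cE^s(K)=0$) or where $\log H^s_{d-1}(K^*,\cdot)$ fails to be integrable; the paper's one-line interchange glosses over exactly these points. For the $\cC^{1,1}$ formula \eqref{eqn:entropy_curvature} the two arguments are essentially dual to one another: the paper uses $\Omega_\infty^s(K)=P^s(K^*)$ and reruns L'Hospital on the representation $\cE^s(K)=\lim_{p\to\infty}\bigl(\Omega_p^s(K)/\Omega_\infty^s(K)\bigr)^{1+p/d}$, whereas you transport the already-established integral from $\bd K^*$ to $\bd K$ via the Gauss map $\sigma_K=-\bn_K$ with Jacobian $H_{d-1}^s(K,\cdot)$ and the reciprocity $H_{d-1}^s(K^*,\sigma_K(\bx))H_{d-1}^s(K,\bx)=1$, which is exactly the mechanism behind Theorem \ref{thm:duality} and the identity $\Omega_\infty^s(K)=P^s(K^*)$ in the first place. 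No gap; if anything your write-up supplies justification the paper omits.
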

\begin{proof}
    By \eqref{def:entropy} and L'Hospital's rule we calculate
    \begin{align*}
        \log \cE^s(K) 
        &= \lim_{q\to 0^+} \left(1+\frac{d}{q}\right) \log \frac{\Omega_q^s(K^*)}{P^s(K^*)}
        = \lim_{q\to 0^+} \frac{(q+d)^2}{d} \frac{1}{\Omega_q^s(K^*)} \frac{\dint}{\dint q} \Omega_q^s(K^*)\\
        &= \frac{1}{P^s(K^*)} \int_{\bd K^*} \left(\lim_{q\to 0^+} H_{d-1}^s(K^*,\bu)^{\frac{q}{d+q}} [\log H_{d-1}^s(K^*,\bu)]\right) \, \Vol_{\bd K^*}^s(\dint\bu)\\
        &= \frac{1}{P^s(K^*)} \int_{\bd K^*} \log H_{d-1}^s(K^*,\bu)\, \vol_{\bd K^*}^s(\dint\bu).
    \end{align*}
    For the second statement we just note that since
    $K$ is of class $\cC^{1,1}$ we have that $\Omega_{\infty}^s(K)=P^s(K^*)$. Thus, by applying L'Hospital's rule on \eqref{def:entropy} we may calculate as above and derive \eqref{eqn:entropy_curvature}.
\end{proof}

\begin{remark}[Relative Entropy]
Let $(\mathbb{X}, \mu)$ be a measure space and $f$ and $g$ be probability densities on $\mathbb{X}$ with respect to $\mu$. The \emph{Kullback--Leibler divergence} or \emph{relative entropy} from $f$ to $g$ is defined as in \cite[Ch.~8]{CT:2006}
\begin{equation}
	D_{KL}(f||g) = -\int_{\mathbb{X}} \log\left( \frac{g(\bx)}{f(\bx)} \right)\, f(\bx)\, \mu(\dint \bx).
\end{equation}
Let $K\subset \S^d$ be a proper spherical convex body of class $\cC^{1,1}$.
By Theorem \ref{thm:KL} we observe that for the measure space $(\bd K, \vol_{\bd K}^s)$
\begin{equation*}
    D_{KL}(f||g) = \log\left(\frac{P^s(K)}{P^s(K^*)}\right) -\log E^s(K),
\end{equation*}
where we set
\begin{equation}\label{P+Q}
    f  := \frac{H_{d-1}^s(K,\cdot)}{P^s(K^*)} \qquad \text{and}\qquad g  := \frac{1}{P^s(K)}.
\end{equation}
Note that \eqref{eqn:entropy_bound} is equivalent to \emph{Gibb's inequality} $D_{KL}(f||g)\geq 0$.

The relative entropy $D_{KL}(f||g)$ measures the gap between $\cE^s(K)$ and $\frac{P^s(K)}{P^s(K^*)}$ in the inequality \eqref{eqn:entropy_bound} and can be seen as  quantifying the information gain when replacing $g$ by $f$.
If $K$ is of class $\cC^{1,1}$, then the density $f$ is exactly the Radon--Nikodým derivative with respect to $\mu=\vol_{\bd K}^s$ of the probability measure $\nu$ on $\bd K$ defined by
\begin{equation*}
    \nu(A) := \frac{C_0(K, A)}{P^s(K^*)} = \frac{\vol_{\bd K^*}^s(\{\bu\in\bd K^*: \bn_{K^*}(\bu)\in A\})}{P^s(K^*)}, \qquad \text{for all Borel $A\subset \bd K$},
\end{equation*}
that is, $f=\frac{\dint\nu}{\dint\mu}$.

If $K\subset \S^d$ is a proper spherical polytope, then $\nu = \frac{C_0(K,\cdot)}{P^s(K^*)}$ is a discrete measure on $\bd K$ concentrated at the vertices $\mathrm{vert}\, K$ of $K$, that is,
\begin{align*}
    \nu(A) 
    = \sum_{\bu\in \mathrm{vert}\, K} \frac{\vol_{d-1}^s(F(K^*,\bu))}{P^s(K^*)} \mathbf{1}[\bu \in A], \qquad \text{for all Borel $A\subset \bd K$},
\end{align*}
where $F(K^*,\bu)$ is the face of $K^*$ with normal direction $\bu$.
In particular, we have $\cE^s(K)=0$ and $D_{KL}(\nu||g) = +\infty$.
\end{remark}

By \eqref{eqn:entropy_bound} we derive the following bound for the entropy functional.
\begin{corollary}[Information Inequality]\label{cor:entropy_bound}
    Let $d\geq 2$ and let $K\subset \S^d$ be a proper spherical convex body. Then
    \begin{equation*}
        \cE^s(K) \leq \frac{P^s(K)}{P^s(K^*)},
    \end{equation*}
    with equality if and only if $K$ is a geodesic ball. In particular, we have
    \begin{equation}\label{eqn:polar_entropy}
        \cE^s(K)\cE^s(K^*)\leq 1.
    \end{equation}
\end{corollary}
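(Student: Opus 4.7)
The plan is to derive both parts of the corollary directly from the chain of inequalities \eqref{eqn:entropy_bound} that was already laid out before the statement, which itself is an immediate consequence of the monotonicity relation \eqref{eqn:monotone_inc} of Theorem \ref{thm:sphere_inequalities} applied to the dual body $K^*$ together with the polar perimeter bound \eqref{eqn:polar_perimter}. Concretely, starting from the definition $\cE^s(K) = \lim_{q\to 0^+}(\Omega_q^s(K^*)/P^s(K^*))^{1+d/q}$, I would note that by Theorem \ref{thm:sphere_inequalities} the function $q \mapsto (\Omega_q^s(K^*)/P^s(K^*))^{1+d/q}$ is monotone increasing for $q > 0$, so the limit at $q \to 0^+$ is bounded above by its value in the limit $q \to \infty$. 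This limiting value is $\Omega_\infty^s(K^*)/P^s(K^*)$, and applying \eqref{eqn:polar_perimter} to $K^*$ gives $\Omega_\infty^s(K^*) \leq P^s(K^{**}) = P^s(K)$, which yields the information inequality.

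For the equality case, the ``if'' direction is verified by direct computation: if $K = C(\alpha)$ is a geodesic ball, then \eqref{eqn:ball_p_float} shows $(\Omega_q^s(C(\alpha)^*)/P^s(C(\alpha)^*))^{1+d/q} = (\tan\alpha)^{d-1}$ for every $q > 0$, and the right-hand side $P^s(K)/P^s(K^*)$ evaluates to the same value, so $\cE^s(K) = (\tan\alpha)^{d-1} = P^s(K)/P^s(K^*)$. For the converse, equality forces equality throughout the chain; in particular $\Omega_\infty^s(K^*) = P^s(K^{**})$, which by \eqref{eqn:polar_perimter} implies that $K^*$ is of class $\cC^{1,1}$. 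With this regularity in hand, equality in the monotonicity \eqref{eqn:monotone_inc} forces $K^*$ (and hence $K$) to be a geodesic ball by the equality characterization in Theorem \ref{thm:sphere_inequalities}.

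For the polar entropy inequality \eqref{eqn:polar_entropy} one simply applies the information inequality to both $K$ and $K^*$, uses $K^{**} = K$, and multiplies:
\begin{equation*}
    \cE^s(K)\,\cE^s(K^*) \leq \frac{P^s(K)}{P^s(K^*)} \cdot \frac{P^s(K^*)}{P^s(K)} = 1.
\end{equation*}
The main subtlety I expect is the equality analysis in the low-regularity setting: one must verify that equality in \eqref{eqn:entropy_bound} bootstraps enough regularity on $K^*$ to invoke the equality characterization of Theorem \ref{thm:sphere_inequalities}, which was stated under a $\cC^{1,1}$ assumption. Fortunately this bootstrap is exactly what equality in \eqref{eqn:polar_perimter} provides, so the chain closes without extra hypotheses.
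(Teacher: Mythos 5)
Your derivation of the main inequality and of \eqref{eqn:polar_entropy} follows the same route as the paper: the corollary is read off from the chain \eqref{eqn:entropy_bound}, which combines the monotonicity in Theorem \ref{thm:sphere_inequalities} with the bound $\Omega_\infty^s\leq P^s(\cdot^*)$ from \eqref{eqn:polar_perimter} (equivalently, with \eqref{eqn:p-iso_sphere} applied to $K^*$). Pushing $q\to\infty$ instead of invoking \eqref{eqn:p-iso_sphere} at each finite $q$ is only a cosmetic difference; you should just note that $\Omega_q^s(K^*)\to\Omega_\infty^s(K^*)$ as $q\to\infty$ requires a (routine) dominated convergence argument for the integrand $H_{d-1}^s(K^*,\cdot)^{q/(d+q)}$.

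The one genuine misstep is in the equality analysis: equality $\Omega_\infty^s(K^*)=P^s(K)$ in \eqref{eqn:polar_perimter} does \emph{not} imply that $K^*$ is of class $\cC^{1,1}$. What it gives is only that $C_0^a(K^*,\S^d)=C_0(K^*,\S^d)$, i.e., that the curvature measure $C_0(K^*,\cdot)$ is absolutely continuous with respect to $\vol_{\bd K^*}^s$; absolute continuity is strictly weaker than the rolling-ball/$\cC^{1,1}$ condition (only the implication ``$\cC^{1,1}$ $\Rightarrow$ $\Omega_\infty^s=P^s(\cdot^*)$'' is asserted in the paper). Fortunately this weaker conclusion is all that is actually needed: equality for every $q>0$ forces equality in H\"older, hence $H_{d-1}^s(K^*,\cdot)=\eta$ a.e., and together with absolute continuity this gives $C_0(K^*,A)=\eta\,\vol_{\bd K^*}^s(A)$, so Kohlmann's characterization applies and $K^*$ (hence $K$) is a geodesic ball. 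This is exactly how the paper handles the equality case of \eqref{eqn:p-iso_sphere} without any a priori regularity; the $\cC^{1,1}$ hypothesis in the equality statement of Theorem \ref{thm:sphere_inequalities} is used there only to guarantee absolute continuity of $C_0$. Replace your claimed $\cC^{1,1}$ bootstrap by this absolute-continuity bootstrap and the argument closes correctly.
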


The isoperimetric and dual isoperimetric inequality (dII) imply the following dual entropy inequality.

\begin{theorem}[Dual Entropy Inequality]\label{thm:entropy_polar}
    Let $d\geq 2$ and let $K\subset \S^d$ be a proper spherical convex body. If $d\neq 2$, then we additionally assume that $K$ is of class $\cC^2_+$. Then
    \begin{equation*}
        \cE^s(K^*) \leq \cE^s(C_K^*),
    \end{equation*}
    with equality if and only if $K$ is a geodesic ball.
\end{theorem}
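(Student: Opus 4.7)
The plan is to deduce this dual entropy inequality immediately from three ingredients already at our disposal: the Information Inequality (Corollary \ref{cor:entropy_bound}), the classical spherical isoperimetric inequality (II) \eqref{eqn:spherical_iso}, and the dual isoperimetric inequality (dII) from Proposition \ref{cor:polar_iso}. The starting point is to apply Corollary \ref{cor:entropy_bound} not to $K$ but to $K^*$. Since $(K^*)^*=K$, this yields
\begin{equation*}
    \cE^s(K^*) \leq \frac{P^s(K^*)}{P^s(K)}.
\end{equation*}
It remains to show that the right-hand side is bounded above by $\cE^s(C_K^*) = P^s(C_K^*)/P^s(C_K)$, where the last identification is just the explicit formula $\cE^s(C(\alpha))=(\tan\alpha)^{d-1}$ applied to the cap $C_K^*$ of polar radius $\tfrac{\pi}{2}-\alpha_K$.

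The natural way to do this is to treat numerator and denominator separately. For the numerator, Proposition \ref{cor:polar_iso} (dII) gives $P^s(K^*) \leq P^s(C_K^*)$; this is precisely where the hypothesis that $K$ be of class $\cC^2_+$ is used when $d \geq 3$, while for $d=2$ the same proposition states that one has equality automatically, which explains why the regularity assumption is not needed in that dimension. For the denominator, the classical isoperimetric inequality \eqref{eqn:spherical_iso} gives $P^s(K) \geq P^s(C_K)$. Chaining the three inequalities produces
\begin{equation*}
    \cE^s(K^*) \leq \frac{P^s(K^*)}{P^s(K)} \leq \frac{P^s(C_K^*)}{P^s(C_K)} = \cE^s(C_K^*),
\end{equation*}
which is the asserted bound.

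For the equality case, any equality in the chain forces equality in the very first step $\cE^s(K^*)=P^s(K^*)/P^s(K)$, and Corollary \ref{cor:entropy_bound} characterizes this as $K^*$ being a geodesic ball, equivalently $K$ being a geodesic ball; the converse is immediate since then $C_K=K$ and both sides coincide. There is no real obstacle here — the deep input has already been absorbed into Proposition \ref{cor:polar_iso} via the recent spherical Alexandrov--Fenchel inequalities of Chen--Guan--Li--Scheuer and Chen--Sun, and the remaining argument is just a short concatenation of the monotonicity bound on $\Omega_q^s$ packaged in Corollary \ref{cor:entropy_bound} with the two isoperimetric inequalities on the sphere; in particular, no new limit computation at $q\to 0^+$ is required beyond what already appears in the definition of $\cE^s$.
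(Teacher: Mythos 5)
Your proposal is correct and follows exactly the paper's own argument: apply the Information Inequality (Corollary \ref{cor:entropy_bound}) to $K^*$, then bound the numerator by (dII) from Proposition \ref{cor:polar_iso} and the denominator by the classical isoperimetric inequality, with the equality case traced back to the equality condition in Corollary \ref{cor:entropy_bound}. Your discussion of why the $\cC^2_+$ hypothesis is dispensable for $d=2$ matches the paper's remark in Proposition \ref{cor:polar_iso} as well.
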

\begin{proof}
    By \eqref{eqn:entropy_bound}, the isoperimetric inequality and the dual isoperimetric inequality (dII), see Proposition \ref{cor:polar_iso}, we conclude
    \begin{equation*}
        \cE^s(K^*) \leq \frac{P^s(K^*)}{P^s(K)} \leq \frac{P^s(C_K^*)}{P^s(C_K)} = \cE^s(C_K^*). \qedhere
    \end{equation*}
\end{proof}

\begin{remark}[Conjectured Entropy Inequality]\label{rmk:conjecture_entropy}
    We may ask if there is an isoperimetric inequality for the entropy functional such that
    \begin{equation}\label{eqn:entropy_ineq}
        \cE^s(K) \overset{?}{\leq} \cE^s(C_K).
    \end{equation}
    By the isoperimetric and dual isoperimetric inequality (dII)
    \begin{equation*}
        \cE^s(C_K) = \frac{P^s(C_K)}{P^s(C_K^*)} \leq \frac{P^s(K)}{P^s(K^*)}.
    \end{equation*}
    Finally,
    by the spherical Blaschke--Santaló inequality, see Theorem \ref{thm:BS}, we have
    \begin{equation*}
        \cE^s(K^*) \overset{\eqref{eqn:entropy_ineq}}{\leq} \cE^s(C_{K^*}) = (\tan \alpha_{K^*})^{d-1} \leq (\tan \alpha_K)^{1-d} = \cE^s(C_K^*).
    \end{equation*}
    We see that \eqref{eqn:entropy_ineq} would imply Theorem \ref{thm:entropy_polar}.
\end{remark}

\begin{theorem}[(Strong) Floating Area Inequality implies Entropy Inequality]\label{cor:entropy_strong_bound}
    Let $d\geq 2$ and let $K\subset \S^d$ be a proper spherical convex body. If there is $p>0$ such that
    \begin{equation}\label{eqn:stronger_ineq}
        \frac{\Omega_p^s(K)}{\Omega_p^s(C_K)} \leq \frac{P^s(K^*)}{P^s(C_K^*)},
    \end{equation}
    then
    \begin{equation*}
        \cE^s(K)\leq \cE^s(C_K).
    \end{equation*}
    For $d\geq 3$ \eqref{eqn:stronger_ineq} together with the dual isoperimetric inequality (dII), see Proposition \ref{cor:polar_iso}, implies $\Omega_p^s(K)\leq \Omega_p^s(C_K)$ and for $d=2$ \eqref{eqn:stronger_ineq} is equivalent to $\Omega_p^s(K)\leq \Omega_p^s(C_K)$.
\end{theorem}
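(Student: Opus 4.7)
The plan is to exploit the monotonicity statement \eqref{eqn:monotone} in Theorem \ref{thm:sphere_inequalities}, which asserts that $p \mapsto \bigl(\Omega_p^s(K)/P^s(K^*)\bigr)^{1+p/d}$ is monotone decreasing on $p \geq 0$. First I would observe that via the duality formula $\Omega_p^s(K) = \Omega_{d^2/p}^s(K^*)$ of Theorem \ref{thm:duality}, the substitution $q = d^2/p$ in the defining limit \eqref{def:entropy} identifies
\begin{equation*}
    \cE^s(K) = \lim_{p\to +\infty} \left(\frac{\Omega_p^s(K)}{P^s(K^*)}\right)^{1+\frac{p}{d}},
\end{equation*}
so the monotonicity yields the pointwise upper bound
\begin{equation*}
    \cE^s(K) \leq \left(\frac{\Omega_p^s(K)}{P^s(K^*)}\right)^{1+\frac{p}{d}}
    \qquad \text{for every } p > 0.
\end{equation*}

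The next step is to note that for a geodesic ball $C = C(\alpha)$ the monotone function above is in fact \emph{constant}, equal to $(\tan\alpha)^{d-1} = \cE^s(C)$; this is immediate from the explicit formulas $\Omega_p^s(C(\alpha)) = \omega_{d-1}(\cos\alpha)^{p(d-1)/(d+p)}(\sin\alpha)^{d(d-1)/(d+p)}$ and $P^s(C(\alpha)^*) = \omega_{d-1}(\cos\alpha)^{d-1}$, or equivalently from the fact that Hölder's inequality used in the proof of Theorem \ref{thm:sphere_inequalities} reduces to an equality when the Gauss--Kronecker curvature is constant. Consequently
\begin{equation*}
    \cE^s(C_K) = \left(\frac{\Omega_p^s(C_K)}{P^s(C_K^*)}\right)^{1+\frac{p}{d}}.
\end{equation*}
Rewriting the hypothesis \eqref{eqn:stronger_ineq} as $\Omega_p^s(K)/P^s(K^*) \leq \Omega_p^s(C_K)/P^s(C_K^*)$ and raising both sides to the positive power $1+p/d$, the two displays combine at once to give $\cE^s(K) \leq \cE^s(C_K)$.

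For the second claim, when $d\geq 3$ the dual isoperimetric inequality (dII) of Proposition \ref{cor:polar_iso} furnishes $P^s(K^*)\leq P^s(C_K^*)$, so \eqref{eqn:stronger_ineq} forces $\Omega_p^s(K)/\Omega_p^s(C_K)\leq 1$. When $d=2$, the remark following Proposition \ref{cor:polar_iso} records that the dual isoperimetric inequality degenerates to the identity $P^s(K^*)=P^s(C_K^*)$ for every proper spherical convex body, so \eqref{eqn:stronger_ineq} is synonymous with $\Omega_p^s(K)\leq \Omega_p^s(C_K)$.

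The argument is essentially a reorganization of results already in hand; the only care required is in the first step, where one must invoke the duality relation $\Omega_p^s(K) = \Omega_{d^2/p}^s(K^*)$ to reconcile the $p\to +\infty$ limit with the $q\to 0^+$ definition of $\cE^s$, and verify that the monotonicity on the $K^*$-side translates into a decreasing limit on the $K$-side. No further analytic input beyond Theorems \ref{thm:duality} and \ref{thm:sphere_inequalities} is needed.
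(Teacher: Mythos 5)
Your argument is correct and is essentially the paper's own proof: both rest on the chain $\cE^s(K) \leq \bigl(\Omega_p^s(K)/P^s(K^*)\bigr)^{1+p/d} \leq \bigl(\Omega_p^s(C_K)/P^s(C_K^*)\bigr)^{1+p/d} = \cE^s(C_K)$, where the first inequality comes from the monotonicity/duality bound \eqref{eqn:entropy_bound} and the last equality from the explicit formulas for geodesic balls. Your handling of the $d\geq 3$ versus $d=2$ dichotomy via (dII) also matches the intended reading of the statement.
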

\begin{proof}
    By Corollary \ref{cor:entropy_bound} and \eqref{eqn:stronger_ineq}, we find
    \begin{equation*}
        \cE^s(K) \leq \left(\frac{\Omega_p^s(K)}{P^s(K^*)}\right)^{1+\frac{p}{d}} \leq \left(\frac{\Omega_p^s(C_K)}{P^s(C_K^*)}\right)^{1+\frac{p}{d}} = \frac{P^s(C_K)}{P^s(C_K^*)} = \cE^s(C_K).
        \qedhere
    \end{equation*}
\end{proof}

Basit, Hoehner, Lángi and Ledford \cite[Thm.~3.8]{BHLL:2024} have show that $\Omega^s_1(K)\leq \Omega^s_1(C_K)$ for $d=2$, and therefore we have the following
\begin{corollary}[Entropy Inequality on $\S^2$ for smooth symmetric convex bodies]
    Let $K\subset \S^{2}$ be a proper spherical convex body of class $\cC^2_+$ that is symmetric about a point $\bo\in\S^{2}$. Then
    \begin{equation*}
        \cE^s(K)\leq \cE^s(C_K).
    \end{equation*}
\end{corollary}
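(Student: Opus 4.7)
The plan is to combine Theorem \ref{cor:entropy_strong_bound} with the recent inequality of Basit, Hoehner, Lángi and Ledford \cite{BHLL:2024}. Concretely, I would apply the criterion of Theorem \ref{cor:entropy_strong_bound} at $p=1$, which for $d=2$ asserts that the hypothesis \eqref{eqn:stronger_ineq} is equivalent to the $L_1$-floating area inequality $\Omega_1^s(K)\leq \Omega_1^s(C_K)$. The latter is precisely what Basit, Hoehner, Lángi and Ledford establish for proper spherical convex bodies of class $\cC^2_+$ on $\S^2$ that are symmetric about a point $\bo\in\S^2$. Feeding this into Theorem \ref{cor:entropy_strong_bound} immediately yields $\cE^s(K)\leq \cE^s(C_K)$.

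The only nontrivial check is the equivalence used above, so I would make it explicit. On $\S^2$ one has the identity $\alpha_K=\frac{\pi}{2}-\alpha_P(K^*)$ noted in the discussion following Proposition \ref{cor:polar_iso}, which gives $P^s(K^*)=P^s(C_K^*)$ for every proper spherical convex body $K\subset\S^2$. Hence the ratio $P^s(K^*)/P^s(C_K^*)$ equals $1$, so that \eqref{eqn:stronger_ineq} reduces to $\Omega_1^s(K)\leq \Omega_1^s(C_K)$. This is the place where the two-dimensional assumption is crucial; in higher dimensions one would additionally need the dual isoperimetric inequality (dII), which the theorem statement avoids here.

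Writing it out as a short chain:
\begin{equation*}
    \cE^s(K)
    \;\leq\; \left(\frac{\Omega_1^s(K)}{P^s(K^*)}\right)^{\!1+\frac{1}{2}}
    \;\leq\; \left(\frac{\Omega_1^s(C_K)}{P^s(C_K^*)}\right)^{\!1+\frac{1}{2}}
    \;=\; \frac{P^s(C_K)}{P^s(C_K^*)}
    \;=\; \cE^s(C_K),
\end{equation*}
where the first inequality is the information inequality \eqref{eqn:entropy_bound} (equivalently Corollary \ref{cor:entropy_bound} applied at $q=1$), the second combines $\Omega_1^s(K)\leq \Omega_1^s(C_K)$ from \cite{BHLL:2024} with $P^s(K^*)=P^s(C_K^*)$, and the final equality is the direct evaluation of $\cE^s$ on a geodesic ball.

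The main obstacle is not in the proof itself but in the availability of the input inequality: the entire argument rests on the symmetric two-dimensional floating area inequality of \cite{BHLL:2024}, whose proof is by an entirely different (polygonal approximation) method. Without that result, one would need to either extend our Theorem \ref{thm:lambda_FI} to cover $d=2$, or find another route to \eqref{eqn:stronger_ineq} at some $p>0$; the rest of the deduction is a straightforward bookkeeping of the two-dimensional identity $P^s(K^*)=P^s(C_K^*)$ and the monotonicity built into Theorem \ref{cor:entropy_strong_bound}.
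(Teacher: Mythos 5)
Your proof is correct and takes essentially the same route as the paper: the corollary is deduced by feeding the symmetric $\cC^2_+$ case of the $d=2$ floating area inequality of Basit, Hoehner, Lángi and Ledford into Theorem \ref{cor:entropy_strong_bound} at $p=1$, using the two-dimensional identity $P^s(K^*)=P^s(C_K^*)$ to see that hypothesis \eqref{eqn:stronger_ineq} reduces to $\Omega_1^s(K)\leq\Omega_1^s(C_K)$. The only cosmetic point is that the first inequality in your displayed chain is \eqref{eqn:entropy_bound} taken at $q=d^2/p=4$ (equivalently the monotonicity \eqref{eqn:monotone} at $p=1$ via the duality $\Omega_4^s(K^*)=\Omega_1^s(K)$), not at $q=1$; this does not affect the argument.
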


\begin{remark}
    Note that \eqref{eqn:strong_ineq} and \eqref{eqn:stronger_ineq} are connected as follows:
    if \eqref{eqn:strong_ineq} holds true and 
    \begin{equation}\label{eqn:connection1}
        P^s(C_{K^*})^{\frac{p}{d+p}} P^s(C_K^*)^{\frac{d}{d+p}} \leq P^s(K^*),
    \end{equation}
    then also \eqref{eqn:stronger_ineq} follows. Conversely, if \eqref{eqn:stronger_ineq} holds true and
    \begin{equation}\label{eqn:connection2}
        P^s(C_{K^*})^{\frac{p}{d+p}} P^s(C_K^*)^{\frac{d}{d+p}} \geq P^s(K^*),
    \end{equation}
    then \eqref{eqn:strong_ineq} follows. For $d=2$ the isoperimetric inequality (II) and equality in the dual isoperimetric inequality (dII) show that \eqref{eqn:connection1} holds true. However, for $d\geq 3$ we have by the isoperimetric and dual isoperimetric inequality that
    \begin{equation*}
        P^s(C_{K^*})\leq P^s(K^*)\leq P^s(C_K^*),
    \end{equation*}
    and \eqref{eqn:connection1} seems to hold asymptotically for $p\to \infty$ and \eqref{eqn:connection2} seems to hold asymptotically for $p\to 0^+$.
\end{remark}

\section{\texorpdfstring{$L_p$}{Lp}-floating area and Entropy and in real space forms}\label{sec:real_limit}

We now investigate real-analytic extension of the $p$-affine surface area and of our notion of entropy to real space forms of constant curvature $\lambda\geq 0$. A real-analytic extension for the affine surface area, that is, $p=1$, was obtained in \cite{BW:2018}. In \cite{BW:2024} we used the volume of the polar of the floating body to naturally derive $p$-floating area for $p_d:=-d/(d+2)$ on the sphere and hyperbolic space, and observed that there are two different real-analytic extensions: 
On the one hand, by fixing a point in all space forms an introducing a weight function, one can connect the centro-affine $p_d$-affine surface area in Euclidean space with the $p_d$-floating area in non-Euclidean spaces. This particular weight function was naturally derived in \cite[Thm.\ 4.17]{BW:2018} by considering a polarity operator relative to the fixed point.

On the other hand, when we do not fix a point, then we can connect the $p_d$-floating area to a rigid-motion invariant curvature measure in Euclidean space. Note that both extensions give the same quantity in the case $p=1$, since Blaschke's affine surface area is equi-affine invariant and therefore both, centro-affine invariant and rigid-motion invariant.

In the same way we can extend the spherical $p$-affine surface area for general $p\neq -d$ as well as the entropy. Remarkably, in case of the entropy, we find that one extension connects with the rigid-motion invariant Gaussian entropy measure $E_C$ in the Euclidean case, whereas the other case connects with the centro-affine invariant entropy measure $E_{PW}$.

\subsection{Real-analytic extension of \texorpdfstring{$\as_p$}{Lp-affine surface area} to \texorpdfstring{$\Omega_p^s$}{Lp-floating area} and \texorpdfstring{$E_{PW}$}{centro-affine entropy} to \texorpdfstring{$E^s$}{spherical curvature entropy}}

Let $\Sp^d(\lambda)$ be the real space form of dimension $d$ and constant curvature $\lambda \geq 0$.
Let $p\in\R$, $p\neq -d$, and let $K\subset \Sp^d(\lambda)$ be a convex body, where we assume that $K$ is of class $\cC^2_+$ if $p<0$, and such that $K$ contains $\bo\in\Sp^d(\lambda)$ in the interior and is contained in the open half-space centered at $\bo$.
We  define a real-analytic extension of the $L_p$-affine surface area by
\begin{equation}
    \as_p^{\lambda,\bo}(K) 
        := \int_{\bd K} \left(\frac{H^\lambda_{d-1}(K,\bu)}{f_{\bo}^\lambda(K,\bu)^{d+1}}\right)^{\frac{p}{d+p}} \, f_{\bo}^\lambda(K,\bu)\, \Vol_{\bd K}^\lambda(\dint \bu), \qquad \text{for $p\neq -d$}.
\end{equation}
Here the weight function $f^\lambda_{\bo}(K,\cdot)$, which was derived in \cite[Thm.\ 4.17]{BW:2024}, is defined by
\begin{equation*}
        f^\lambda_{\bo}(K,\bu) := \sqrt{\left|\frac{\lambda + (\tan_{\lambda} d_\lambda(\bo,H(K,\bu)))^2}{1+\lambda (\tan_{\lambda} d_\lambda(\bo,H(K,\bu)))^2}\right|}
    \end{equation*}
where $H(K,\bu)$ denotes the tangent hyperplane to $K$ at $\bu$, and $d_{\lambda}(\bo,H(K,\bu))$ is the minimal geodesic distance in $\Sp^d(\lambda)$ of $\bo$ to the points in $H(K,\bu)$.
In particular, 
\begin{equation*}
    \as_0^{\lambda,\bo}(K) = \int_{\bd K} f_\bo^\lambda(K,\bu)\, \Vol_{\bd K}^\lambda(\dint \bu), 
\end{equation*}
and
\begin{equation*}
  \as_\infty^{\lambda,\bo}(K)   =   \lim_{p\to \infty} \as_p^{\lambda,\bo}(K) 
        = \int_{\bd K} \frac{H^\lambda_{d-1}(K,\bu)}{f_{\bo}^\lambda(K,\bu)^d} \, \Vol_{\bd K}^\lambda(\dint \bu).
\end{equation*}
Note that for $\lambda =1$ we have $f^\lambda_{\bo} \equiv 1$, which yields $\as^{1,\bo}_p(K)=\Omega_p^s(K)$.

\begin{example}
        For $\lambda>0$ and a  geodesic ball $C^\lambda_{\bo}(\alpha)$ centered in $\bo\in\Sp^d(\lambda)$ of radius $\alpha\in[0,\frac{\pi}{2\sqrt{\lambda}}]$ we find that
        $H_{d-1}^\lambda(C^\lambda_{\bo}(\alpha),\cdot) \equiv (\tan_\lambda \alpha)^{-(d-1)}$ and 
        \begin{equation*}
            f_{\bo}^\lambda(C_\bo^\lambda(\alpha),\cdot) 
                \equiv \sqrt{\lambda (\cos_\lambda \alpha)^2 + (\sin_{\lambda} \alpha)^2},
        \end{equation*}
        where we recall \eqref{eqn:lambda_sin} for the definition of $\sin_{\lambda}$ and $\cos_\lambda$.
    This yields
    \begin{align*}
        \as_0^{\lambda, {\bo}}(C^\lambda_{\bo}(\alpha)) 
            &= \omega_{d-1} (\sin_\lambda \alpha)^{d-1} \sqrt{\lambda(\cos_\lambda \alpha)^2+(\sin_\lambda \alpha)^2}\\
        \as_\infty^{\lambda, {\bo}}(C^\lambda_{\bo}(\alpha)) 
            &= \omega_{d-1} (\cos_\lambda \alpha)^{d-1} (\lambda (\cos_\lambda \alpha)^2+(\sin_\lambda \alpha)^2)^{-d/2}
    \end{align*}
    and
    \begin{align*}
        \as_p^{\lambda, {\bo}}(C^\lambda_{\bo}(\alpha)) 
            &=  \omega_{d-1} (\sin_\lambda \alpha)^{\frac{d(d-1)}{d+p}} (\cos_\lambda\alpha)^{\frac{p(d-1)}{d+p}} 
                (\lambda (\cos_\lambda \alpha)^2+(\sin_\lambda \alpha)^2)^{-\frac{1}{2} \frac{d(p-1)}{d+p}}\\
            &= \as_0(C^\lambda_{\bo}(\alpha))^{\frac{d}{d+p}} \as_\infty(C^\lambda_{\bo}(\alpha))^{\frac{p}{d+p}},
    \end{align*}
\end{example}

A duality relation similar to the one of Theorem \ref{thm:duality} holds for the $\bo$-polarity in $\Sp^d(\lambda)$ which was introduced in \cite[Sec.\ 4.4]{BW:2024}. Note that for $\lambda=0$, $\bo$-polarity is just the usual polarity on Euclidean convex bodies, that is, $K^{\bo}=K^\circ$, and for $\lambda=1$, $\bo$-polarity is the reflection of the spherical duality, that is, $K^{\bo}=-K^*$. 
\begin{theorem} \label{Dual}
Let $d\geq 2$, $\lambda>0$, $p\in\R\setminus\{-d\}$, and
let $K\subset \Sp^d(\lambda)$ be a convex body that  contains $\bo\in\Sp^d(\lambda)$ in the interior and is contained in the open half-space centered in $\bo$. 
If $p\leq 0$ or $p=\infty$, then additionally assume that $K$ is of class $\cC^2_+$.
Then
\begin{equation*}
    \as_p^{\lambda,\bo}(K) = \as_{d^2/p}^{\lambda,\bo}(K^\bo),
\end{equation*}
where $K^\bo$ is the $\bo$-polar of $K$, see \cite[Sec.\ 4.4]{BW:2024}. 
In particular, if $K$ is of class $\cC^2_+$, then 
\begin{equation*}
    \as_\infty^{\lambda,\bo}(K) = \as_{0}^{\lambda,\bo}(K^\bo).
\end{equation*}
\end{theorem}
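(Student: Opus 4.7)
The plan is to mirror the argument for the spherical case in Theorem \ref{thm:duality}, now carefully accounting for the conformal weight $f_\bo^\lambda$. First I would introduce the $\bo$-polarity duality map $\sigma_K^\bo : \bd K \to \bd K^\bo$ that associates to each boundary point $\bu \in \bd K$ the unique $\bv \in \bd K^\bo$ which $\bo$-supports $K$ at $\bu$; for $\lambda = 1$ this reduces to the map $\sigma_K = -\bn_K$ used in Theorem \ref{thm:duality}. Under the $\cC^2_+$ hypothesis, imposed when $p \le 0$ or $p = \infty$, the map $\sigma_K^\bo$ is bi-Lipschitz and the change-of-variables formula applies directly; for $p > 0$ and general $K$, one restricts to the set of normal boundary points with $H_{d-1}^\lambda(K,\cdot) > 0$ and invokes Hug's framework, exactly as in the proof of Theorem \ref{thm:duality}.

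The heart of the proof is to establish two pointwise identities at $\vol_{\bd K}^\lambda$-almost every $\bu\in\bd K$. Setting $\kappa^{\lambda,\bo}(K,\bu) := H_{d-1}^\lambda(K,\bu)/f_\bo^\lambda(K,\bu)^{d+1}$, which plays the role of a generalized centro-affine curvature adapted to $\bo$-polarity in $\Sp^d(\lambda)$, these identities read
\begin{equation*}
    \kappa^{\lambda,\bo}(K, \bu) \cdot \kappa^{\lambda,\bo}(K^\bo, \sigma_K^\bo(\bu)) = 1,
    \qquad
    J^{\bd K}(\sigma_K^\bo)(\bu) \cdot f_\bo^\lambda(K^\bo, \sigma_K^\bo(\bu)) = \kappa^{\lambda,\bo}(K,\bu)\, f_\bo^\lambda(K,\bu).
\end{equation*}
I would derive both by pulling back to the projective model: by \cite[Sec.\ 4.4]{BW:2024}, $\bo$-polarity corresponds to ordinary Euclidean polarity of $\overline{K} = g_\bo^\lambda(K) \subset \R^d$ about the origin, while $f_\bo^\lambda$ is the conformal factor relating $\vol_{\bd K}^\lambda$ with $\vol_{\bd \overline{K}}^e$. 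Under this dictionary, the two displayed identities reduce to the classical Euclidean centro-affine identities, namely $\kappa_0(\overline{K},\cdot)\,\kappa_0(\overline{K}^\circ,\cdot) \equiv 1$ and the Jacobian formula for the Euclidean polarity map. The special case $p = -d/(d+2)$ was carried out this way in \cite[Thm.\ 4.12]{BW:2024}, and the computation extends verbatim to general $p \ne -d$.

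Given these identities, the theorem follows from a direct change of variables. Substituting $\bv = \sigma_K^\bo(\bu)$ and using $(d^2/p)/(d+d^2/p) = d/(d+p)$,
\begin{align*}
    \as_{d^2/p}^{\lambda,\bo}(K^\bo)
    &= \int_{\bd K^\bo} \kappa^{\lambda,\bo}(K^\bo, \bv)^{d/(d+p)}\, f_\bo^\lambda(K^\bo, \bv)\, \vol_{\bd K^\bo}^\lambda(\dint \bv) \\
    &= \int_{\bd K} \kappa^{\lambda,\bo}(K, \bu)^{-d/(d+p)}\, f_\bo^\lambda(K^\bo, \sigma_K^\bo(\bu))\, J^{\bd K}(\sigma_K^\bo)(\bu)\, \vol_{\bd K}^\lambda(\dint\bu) \\
    &= \int_{\bd K} \kappa^{\lambda,\bo}(K, \bu)^{p/(d+p)}\, f_\bo^\lambda(K,\bu)\, \vol_{\bd K}^\lambda(\dint\bu) = \as_p^{\lambda,\bo}(K).
\end{align*}
The main obstacle is the careful book-keeping of the weight $f_\bo^\lambda$ in the two pointwise identities, in particular tracking how $f_\bo^\lambda(K,\cdot)$ transforms under $\sigma_K^\bo$ into $f_\bo^\lambda(K^\bo, \sigma_K^\bo(\cdot))$ via the gnomonic dictionary. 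Once this dictionary is set up, all remaining steps are formal.
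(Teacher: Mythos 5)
Your proposal is correct and follows essentially the same route as the paper: both reduce the statement, via the gnomonic projection, to the Euclidean centro-affine identities $\kappa_0(\overline{K},\bx)\,\kappa_0(\overline{K}^\circ,\bx^\circ)=1$ and the change-of-variables formula for the polar point map (the paper invokes \cite[Lem.\ 2.4]{BW:2024} for the latter, phrased through cone volume measures, while you package the same content as two intrinsic pointwise identities for $\kappa^{\lambda,\bo}$ and the Jacobian of $\sigma_K^\bo$). The difference is purely organizational, and your book-keeping of the weight $f_\bo^\lambda$ is consistent with the explicit factors $(1+\lambda\|\bx\|^2)$ and $(1+\lambda\|\bx^\circ\|^2)$ appearing in the paper's computation.
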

\begin{proof}
    We choose a projective model $\overline{K}\subset \R^d$ of $K\in\Sp^d(\lambda)$. Then by \cite[Eq.\ 4.11]{BW:2024} and \cite[Eq.\ (4.15)]{BW:2024}
    \begin{align*}
        H^\lambda_{d-1}(\overline{K},\bx) f_{\bo}^\lambda(\overline{K},\bx)^{-(d+1)} 
        &= H_{d-1}^e(\overline{K},\bx)\|\bx^\circ\|^{d+1} 
            \left(\frac{1+\lambda\|\bx\|^2}{1+\lambda \|\bx^\circ\|^2}\right)^{\frac{d+1}{2}}\\
        &=\kappa_0(\overline{K},\bx) \left(\frac{1+\lambda\|\bx\|^2}{1+\lambda \|\bx^\circ\|^2}\right)^{\frac{d+1}{2}},
    \end{align*}
    where $\bx^\circ = \frac{\bx}{\bx\cdot \bn_{\overline{K}}(\bx)}\in\bd \overline{K}^\circ$ is the polar point map and $\kappa_0(\overline{K},\cdot)$ is the centro-affine curvature function of $K$. In particular, 
    \begin{equation*}
        \frac{H^\lambda_{d-1}(\overline{K},\bx)}{f_{\bo}^\lambda(\overline{K},\bx)^{d+1}} \, \frac{H^\lambda_{d-1}(\overline{K}^\circ,\bx^\circ)}{f_{\bo}^\lambda(\overline{K}^\circ,\bx^\circ)^{d+1}} = \kappa_0(\overline{K},\bx) \, \kappa_0(\overline{K}^\circ,\bx^\circ)= 1,
    \end{equation*}
    for almost all $\bx\in\bd \overline{K}$, where we used that $(\bx^\circ)^\circ=\bx$.
    Using \cite[Lem.\ 2.4]{BW:2024} we conclude
    \begin{align*}
        \as_p^{\lambda,\bo}(\overline{K}) 
            &= \int_{\bd\overline{K}} \left(\frac{H^\lambda_{d-1}(\overline{K},\bx)}{f_{\bo}^\lambda(K,\bx)^{d+1}}\right)^{\frac{p}{d+p}}
                f_{\bo}^\lambda(\overline{K},\bx)\, \vol_{\bd \overline{K}}^\lambda(\dint \bx)\\
            &= d \int_{\bd\overline{K}} \kappa_0(\overline{K},\bx)^{\frac{p}{d+p}}
                \left(\frac{1+\lambda\|\bx\|^2}{1+\lambda\|\bx^\circ\|^2}\right)^{\frac{(d+1)p}{2(d+p)}} 
                    \frac{\sqrt{1+\lambda\|\bx^\circ\|^2}}{(1+\lambda\|\bx\|^2)^{\frac{d}{2}}}
                    \, V_{\overline{K}}(\dint\bx)\\
            &= d \int_{\bd\overline{K}^\circ} \kappa_0(\overline{K}^\circ,\by)^{\frac{d}{d+p}}
                \left(\frac{1+\lambda\|\by^\circ\|^2}{1+\lambda\|\by\|^2}\right)^{\frac{(d+1)p}{2(d+p)}} 
                    \frac{\sqrt{1+\lambda\|\by\|^2}}{(1+\lambda\|\by^\circ\|^2)^{\frac{d}{2}}} 
                    \, V_{\overline{K}^{\circ}}(\dint\by)\\
            &= d\int_{\bd \overline{K}^\circ} \kappa_0(\overline{K}^\circ,\by)^{\frac{d}{d+p}}
                \left(\frac{1+\lambda\|\by\|^2}{1+\lambda\|\by^\circ\|^2}\right)^{\frac{(d+1)d}{2(d+p)}}
                \frac{\sqrt{1+\lambda \|\by^\circ\|^2}}{(1+\lambda\|\by\|^2)^{\frac{d}{2}}} 
                \, V_{\overline{K}^\circ}(\dint\by)\\
            &= \int_{\bd \overline{K}} \left(\frac{H^\lambda_{d-1}(\overline{K}^\circ,\by)}{f_{\bo}^\lambda(K^\circ,\by)^{d+1}}\right)^{\frac{d}{d+p}}
                f_{\bo}^\lambda(\overline{K}^\circ,\by)\, \vol_{\bd \overline{K}^\circ}^{\lambda}(\dint\by)
            =\as_p^{\lambda,\bo}(\overline{K}^\circ).\qedhere
    \end{align*}
\end{proof}

For $K\subset \Sp^d(\lambda)$ such that $K$ contains $\bo$ in the interior is contained in the half-space around $\bo$, we can use the gnomonic projection $g^\lambda_{\bo} :\Sp^d(\lambda)\to \R^d$ to identify $K$ with the projective model $\overline{K}:=g^\lambda_{\bo}(K)\subset \R^d$. 

\begin{theorem}\label{thm:limit_center}
    Let $\overline{K}\subset\R^d$ be a convex body that contains the origin in the interior. 
    For $\lambda> 0$ and a fixed point $\bo\in\Sp^d(\lambda)$ we consider $\R^d$ as projective model of $\Sp^d(\lambda)$ around $\bo$. Then
    \begin{equation}\label{lambda=null}
        \lim_{\lambda\to 0^+} \as_p^{\lambda,\bo}(\overline{K}) 
         = \as_p(\overline{K})
    \end{equation}
    and
    \begin{align}
        \lim_{\lambda\to 1} \as_p^{\lambda,\bo}(\overline{K})
         = \Omega_p^s(\overline{K}).
    \end{align}
\end{theorem}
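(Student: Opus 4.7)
The plan is to work from the integral representation of $\as_p^{\lambda,\bo}(\overline{K})$ in terms of the centro-affine curvature $\kappa_0(\overline{K},\cdot)$ and the cone volume measure $V_{\overline{K}}$ that already appeared in the proof of Theorem \ref{Dual}, namely
\begin{equation*}
    \as_p^{\lambda,\bo}(\overline{K}) = d \int_{\bd\overline{K}} \kappa_0(\overline{K},\bx)^{\frac{p}{d+p}}\, G_\lambda(\bx)\, V_{\overline{K}}(\dint\bx),
\end{equation*}
where
\begin{equation*}
    G_\lambda(\bx) := \left(\frac{1+\lambda\|\bx\|^2}{1+\lambda\|\bx^\circ\|^2}\right)^{\frac{(d+1)p}{2(d+p)}} \frac{\sqrt{1+\lambda\|\bx^\circ\|^2}}{(1+\lambda\|\bx\|^2)^{\frac{d}{2}}}.
\end{equation*}
This representation is exactly the one derived in the middle of the proof of Theorem \ref{Dual}, so once it is in hand both limits reduce to taking the limit under the integral sign.

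Next, I would show that $G_\lambda$ is continuous in $\lambda\in[0,1]$ pointwise for almost every $\bx\in\bd\overline{K}$ and uniformly bounded. Since $\overline{K}$ is compact and contains the origin in its interior, there exist $0<r\le R<\infty$ with $r\le\|\bx\|\le R$ for $\bx\in\bd\overline{K}$; and $\|\bx^\circ\|=\|\bx\|/(\bx\cdot\bn_{\overline{K}}(\bx))$ is similarly bounded between two positive constants for $V_{\overline{K}}$-almost every $\bx$ (equivalently, since $\overline{K}^\circ$ is bounded and contains the origin in its interior). Thus there exist positive constants $c,C$, independent of $\lambda\in[0,1]$, with $c\le G_\lambda(\bx)\le C$ for all such $\bx$.

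The pointwise limits are $G_0(\bx)=1$ and $G_1(\bx)$ equals exactly the factor that turns $\kappa_0(\overline{K},\bx)^{\frac{p}{d+p}}V_{\overline{K}}(\dint\bx)$ into the spherical integrand for $\Omega_p^s$; the latter is precisely the identity already reached in the proof of Theorem \ref{Dual} at $\lambda=1$, and it is also visible directly from the observation $f_{\bo}^1\equiv 1$ noted in the paper. Consequently, dominated convergence applies provided $\kappa_0(\overline{K},\cdot)^{\frac{p}{d+p}}$ is $V_{\overline{K}}$-integrable, which is exactly the finiteness of $\as_p(\overline{K})$. For $p>0$ this is standard (e.g.\ by Hölder against the finite cone measure), and for $p\in(-d,0)\cup(-\infty,-d)$ the standing assumption that $\overline{K}$ is of class $\cC^2_+$ makes $\kappa_0(\overline{K},\cdot)$ uniformly bounded above and below by positive constants, so integrability is trivial.

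Combining these ingredients, dominated convergence yields
\begin{equation*}
    \lim_{\lambda\to 0^+} \as_p^{\lambda,\bo}(\overline{K}) = d\int_{\bd\overline{K}}\kappa_0(\overline{K},\bx)^{\frac{p}{d+p}}\,V_{\overline{K}}(\dint\bx) = \as_p(\overline{K}),
\end{equation*}
and the value at $\lambda=1$ is $\Omega_p^s(\overline{K})$ by $f_{\bo}^1\equiv 1$, so continuity of $\lambda\mapsto\as_p^{\lambda,\bo}(\overline{K})$ gives the second limit. The main technical subtlety I expect is the domination of $G_\lambda$ near boundary points where $\bx\cdot\bn_{\overline{K}}(\bx)$ could be small; this is handled by passing to the cone volume measure (which carries the factor $\bx\cdot\bn_{\overline{K}}(\bx)$) and by restricting attention to the $V_{\overline{K}}$-full set of normal boundary points, exactly as is done when establishing the Euclidean $L_p$-affine surface area integral in Hug's framework cited in the paper.
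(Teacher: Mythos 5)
Your proposal is correct and follows essentially the same route as the paper: both write $\as_p^{\lambda,\bo}(\overline{K})$ as an integral over $\bd\overline{K}$ against an explicit $\lambda$-dependent weight (the paper uses the surface-area-measure form cited from earlier work, you use the equivalent cone-volume/centro-affine form already derived in the proof of Theorem \ref{Dual}) and then pass to the limit in $\lambda$. The only difference is that you supply the dominated-convergence justification (uniform two-sided bounds on the weight plus $V_{\overline{K}}$-integrability of $\kappa_0^{p/(d+p)}$) that the paper leaves implicit, which is a welcome addition rather than a deviation.
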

\begin{proof}
    By \cite[Eq.~(4.13)]{BW:2018} and \cite[Eq.~(4.15)]{BW:2024}, we find
    \begin{align*}
        \as_p^{\lambda,\bo}(\overline{K}) 
        &= \int_{\bd \overline{K}} \left( H_{d-1}^e(\overline{K},\bx) 
        \left(\frac{1+\lambda\|\bx\|_2^2}{\lambda +(\bx\cdot \bn_{\overline{K}}(\bx))^2}\right)^{\frac{d+1}{2}} \right)^{\frac{p}{d+p}}
        \frac{\sqrt{\lambda+(\bx\cdot \bn_{\overline{K}}(\bx))^2}}{(1+\lambda\|\bx\|_2^2)^{d/2}} \, \vol^e_{\bd \overline{K}}(\dint \bx).
    \end{align*}
    Taking the limit $\lambda\to 0^+$, respectively $\lambda\to 1$, the statement follows.
\end{proof}

\begin{remark}
Thus, for $\lambda =0$ we recover the usual $L_p$-affine surface area in $\mathbb{R}^d$ and for $\lambda =1$ we recover the $L_p$-floating  area
on $\mathbb{S}^d$. In particular, for $p=0$, we see that the spherical surface area $P^s(\overline{K})=\lim_{\lambda\to 1} \as^{\lambda,\bo}_0(\overline{K})$ can be related for to the Euclidean volume $d\vol_d^e(\overline{K}) = \lim_{\lambda\to 0^+} \as_0^{\lambda,\bo}(\overline{K})$.
\end{remark}

Theorem \ref{thm:sphere_inequalities} naturally extends to $\as_p^{\lambda,\bo}$ as follows.  
\begin{theorem}\label{new-lambda_inequalities}
    Let $d\geq 2$, $\lambda\geq 0$, and $p,q,r\in\R\setminus\{-d\}$.
    Let $K\subset \Sp^d(\lambda)$ be a convex body that contains $\bo\in\Sp^d(\lambda)$ in the interior and is contained in the open half-space centered in $\bo$. 
    Additionally assume that $K$ is of class $\cC^2_+$ if $\min\{p,q,r\}<0$.
    
    If $t:= \frac{(q-r)(d+p)}{(p-r)(d+q)}>1$ and $t':=\frac{t}{t-1}$, then
    \begin{equation*}\label{lambda-inequality0}
       \as_p^{\lambda,\bo}(K) \leq \as_q^{\lambda,\bo}(K)^{1/t} \as_r^{\lambda,\bo}(K)^{1/t'},
    \end{equation*}
    with equality if $K$ is a geodesic ball. Moreover, if $K$ is of class $\cC^{1,1}$ and we have equality, then $K$ is a geodesic ball.
    
    For $r=0$, we have that
    \begin{equation}\label{lambda-monotone_inc}
        \left(\frac{ \as_p^{\lambda,\bo}(K)}{ \as_0^{\lambda,\bo}(K)}\right)^{1+\frac{d}{p}} 
            \leq \left(\frac{ \as_q^{\lambda,\bo}(K)}{ \as_0^{\lambda,\bo}(K)}\right)^{1+\frac{d}{q}},
    \end{equation}
    for all $p,q$ such that $0<p<q$ or $p<q<-d$ or $p>0, q<-d$, with the same equality conditions as above.
    
    Furthermore, for $r\to \infty$, we find that
    \begin{equation}\label{eqn:lambda-monotone}
        \left(\frac{ \as_p^{\lambda,\bo}(K)}{ \as_\infty^{\lambda,\bo}(K)}\right)^{1+\frac{p}{d}} \quad \text{is monotone decreasing for $p\geq 0$.}
    \end{equation}
    
    Finally, for $q\to\infty$ and $r=0$, this yields for $p>0$ that
    \begin{equation}\label{lambda-inequality}
       \as_p^{\lambda,\bo}(K) \leq \as_0^{\lambda,\bo}(K)^{\frac{d}{d+p}} \as_0^{\lambda,\bo}(K^\be)^{\frac{p}{d+p}},
    \end{equation}
    and for $p<0$, $p\neq -d$, the inequality \eqref{lambda-inequality} is reversed.
    Equality holds if and only if $K$ is a geodesic ball.
\end{theorem}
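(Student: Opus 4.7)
The plan is to mirror the proof of Theorem~\ref{thm:sphere_inequalities} in the weighted setting of $\as_p^{\lambda,\bo}$. Introduce the shorthand $\psi(\bu) := H_{d-1}^\lambda(K,\bu)/f_\bo^\lambda(K,\bu)^{d+1}$, so that
\begin{equation*}
\as_p^{\lambda,\bo}(K) = \int_{\bd K} \psi(\bu)^{p/(d+p)}\, f_\bo^\lambda(K,\bu)\, \vol_{\bd K}^\lambda(\dint\bu).
\end{equation*}
Factor the integrand as $\bigl(\psi^{r/(d+r)} f_\bo^\lambda\bigr)^{1/t'}\bigl(\psi^{q/(d+q)} f_\bo^\lambda\bigr)^{1/t}$ and verify, using $t = (q-r)(d+p)/((p-r)(d+q))$, the exponent identities $1/t + 1/t' = 1$ and $r/((d+r)t') + q/((d+q)t) = p/(d+p)$. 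Hölder's inequality on $(\bd K, \vol_{\bd K}^\lambda)$ then yields the master estimate $\as_p^{\lambda,\bo}(K) \leq \as_r^{\lambda,\bo}(K)^{1/t'}\,\as_q^{\lambda,\bo}(K)^{1/t}$. When $K = C_\bo^\lambda(\alpha)$, both $H_{d-1}^\lambda$ and $f_\bo^\lambda$ are constant on $\bd K$, so equality holds. For the converse under $\cC^{1,1}$-regularity, Hölder's equality case forces $\psi$ to be almost everywhere constant; then $C_0^\lambda(K,\cdot)$ is absolutely continuous with constant density relative to $\vol_{\bd K}^\lambda$, and Kohlmann's rigidity theorem (applied as in the proof of Theorem~\ref{thm:sphere_inequalities}) gives that $K$ is a geodesic ball.

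All remaining claims follow by specialization. Setting $r = 0$ and raising the master inequality to the power $(d+q)/q$ produces \eqref{lambda-monotone_inc}, with $t > 1$ checked by a short sign analysis in each of the three listed regimes $0 < p < q$, $p < q < -d$, and $p > 0$, $q < -d$. Passing $r \to \infty$ and identifying the pointwise limit of $\psi^{r/(d+r)} f_\bo^\lambda$ with $H_{d-1}^\lambda/(f_\bo^\lambda)^d$ gives the monotonicity statement \eqref{eqn:lambda-monotone}. For \eqref{lambda-inequality}, take $q \to \infty$ and $r = 0$ simultaneously in the master inequality to obtain
\begin{equation*}
\as_p^{\lambda,\bo}(K) \leq \as_0^{\lambda,\bo}(K)^{d/(d+p)}\, \as_\infty^{\lambda,\bo}(K)^{p/(d+p)},
\end{equation*}
and substitute the duality $\as_\infty^{\lambda,\bo}(K) = \as_0^{\lambda,\bo}(K^\bo)$ furnished by Theorem~\ref{Dual} (the formal limit $d^2/p = 0$). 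The reversal for $p < 0$, $p \neq -d$, follows from the reverse Hölder inequality, since one of the two exponents becomes negative in this range.

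The main obstacle I anticipate is the equality characterization in the Euclidean limit $\lambda = 0$, where $\as_p^{0,\bo}$ reduces to the classical centro-affine invariant $L_p$-affine surface area and constancy of $\psi = \kappa_0$ characterizes every ellipsoid centered at $\bo$, not only Euclidean balls. The claim that $K$ is a geodesic ball in the equality case must therefore be interpreted modulo the centro-affine invariance of $\Sp^d(0)$, whereas for $\lambda > 0$ the weight $f_\bo^\lambda$ reduces the symmetry group to isometries fixing $\bo$ and one genuinely recovers rigidity to geodesic balls centered at $\bo$. A secondary technical point is the justification of the passages $r \to \infty$ and $q \to \infty$ inside Hölder's inequality, which is handled by dominated convergence once the $\cC^2_+$ hypothesis is used to bound $\psi$ away from $0$ and $\infty$ on $\bd K$.
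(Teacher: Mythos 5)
Your proposal follows the paper's proof exactly: the paper's entire argument for this theorem is the one-line remark that the proof is the same as that of Theorem \ref{thm:sphere_inequalities} (H\"older's inequality with the same exponent bookkeeping, Kohlmann's rigidity theorem for the equality case, and Theorem \ref{Dual} for the final inequality), which is precisely what you reconstruct. The equality-case subtlety you flag is genuine --- constancy of $\psi = H_{d-1}^\lambda/(f_{\bo}^\lambda)^{d+1}$ does not give a constant density for the curvature measure whenever $f_{\bo}^\lambda$ is nonconstant on $\bd K$, so the reduction to Kohlmann's theorem needs an extra step for $\lambda\neq 1$ and fails outright at $\lambda=0$ where constant $\psi=\kappa_0$ characterizes centered ellipsoids --- but the paper's terse proof glosses over this in exactly the same way, so this is a shared issue rather than a defect of your argument relative to the paper's.
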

\begin{proof}
The proof  is the same as the one for Theorem  \ref{thm:sphere_inequalities}. For (\ref{lambda-inequality}) we also use Theorem  \ref{Dual}.
\end{proof}

\begin{remark}[Connection between $p$-affine isoperimetric inequality and \eqref{lambda-inequality}]
 From inequality (\ref{lambda-inequality})   we derive  that for $p >0$ and $\lambda =0$, 
    \begin{equation}\label{eqn:p_affine}
        \as_p(\overline{K}) 
        \leq d \vol_d^e(\overline{K})^{\frac{d}{d+p}} \vol_d^e(\overline{K}^\circ)^{\frac{p}{d+p}}
        \leq d \kappa_d^{\frac{2p}{d+p}} \vol_d^e(\overline{K})^{\frac{d-p}{d+p}} = \as_p(B_{\overline{K}}),
    \end{equation}
    where  $B_{\overline{K}}$ is a Euclidean ball with the same (Euclidean) volume as $\overline{K}$ and where in the second inequality we used the Blaschke--Santaló inequality. Thus \eqref{lambda-inequality} can be seen as a real-analytic extension of the $p$-affine isoperimetric inequality $\as_p(\overline{K}) \leq \as_p(B_{\overline{K}})$.
    Note in particular, that for $\lambda=1$, we obtain again
    \begin{equation*}
        \Omega_p^s(K) \leq P^s(K)^{\frac{d}{d+p}} P^s(K^*)^{\frac{p}{d+p}}.
    \end{equation*}
 For $\lambda =1$ and $p=0$ we have by the spherical isoperimetric inequality
    \begin{equation*}
        \as_0^{1, \bo} (K) = \Omega^s_0(K) = P^s(K) \geq P^s(C_K) = \as_0^{1, \bo}(C_K).
    \end{equation*}
    Thus, we cannot expect that a direct analog to \eqref{eqn:p_affine} of the form $\as_p^{\lambda, \bo}(K) \leq \as_p^{\lambda, \bo}(C_K)$ is true for all $p\geq 0$ and $\lambda \geq 0$.
\end{remark}

The monotonicity behavior (\ref{eqn:lambda-monotone}) of Theorem \ref{new-lambda_inequalities} leads us to define an entropy power functional by
\begin{equation*}
    \cE^{\lambda,\bo}(K) 
        := \lim_{q\to 0^+} \left(\frac{ \as_q^{\lambda,\bo}(K^\bo) }{ \as_0^{\lambda,\bo}(K^\bo) } \right)^{1+\frac{d}{q}}.
\end{equation*}
where $K^\bo$ is the $\bo$-polar of $K$, see \cite[Sec.\ 4.4]{BW:2024}. If $K$ is of class $\cC^{1,1}$, then also
\begin{equation*}
    \cE^{\lambda,\bo}(K)=\lim_{p\to\infty} \left(\frac{ \as_p^{\lambda,\bo}(K) }{ \as_\infty^{\lambda,\bo}(K) }\right)^{1+\frac{p}{d}}.
\end{equation*}

By Theorem \ref{new-lambda_inequalities} we have that  
\begin{equation}
    \cE^{\lambda,\bo}(K) \leq   \frac{ \as_0^{\lambda,\bo}(K) }{ \as_0^{\lambda,\bo}(K^\bo) } \leq \frac{\as_0^{\lambda,\bo}(K)}{\as_\infty^{\lambda,\bo}(K)}.
\end{equation}

Analogous to Theorem \ref{thm:KL} we find the following
\begin{theorem}\label{thm:lambda_PW}
Let $\lambda \geq 0$ and let $K \subset  \Sp^d(\lambda)$ be a convex body that contains $\bo\in\Sp^d(\lambda)$ in the interior and is contained in the open half-space centered in $\bo$. Then
\begin{equation*}
    \log \cE^{\lambda,\bo}(K) 
        = \frac{1}{\as_0^{\lambda,\bo}(K^\bo)} 
            \int_{\bd K^\bo} \left[\log \frac{H_{d-1}^\lambda(K^\bo,\bu)}{f_{\bo}^\lambda(K^\bo,\bu)^{d+1}}\right] 
                \, f_{\bo}^\lambda(K,\bu) \, \vol_{\bd K^\bo}^\lambda(\dint \bu),
\end{equation*}
and if $K$ is of class $\cC^{1,1}$, then
\begin{equation*}
    \log  \cE^{\lambda,\bo}(K) 
        = -\frac{1}{ \as_0^{\lambda,\bo}(K^\bo) } \int_{\bd K} \frac{H^\lambda_{d-1}(K,\bu)}{f_{\bo}^\lambda(K,\bu)^{d+1}} 
            \left[\log  \frac{H^\lambda_{d-1}(K,\bu)}{f_{\bo}^\lambda(K,\bu)^{d+1}}\right] \,f_{\bo}^\lambda(K,\bu) \vol_{\bd K}^\lambda(\dint \bu).
\end{equation*}
\end{theorem}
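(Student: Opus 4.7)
The plan is to mirror the L'Hôpital strategy used in the proof of Theorem \ref{thm:KL}, with the observation that passing from $\lambda = 1$ to arbitrary $\lambda \geq 0$ only introduces the $q$-independent weight $f_\bo^\lambda(K^\bo,\cdot)$ as an extra factor in the integrand, so it is carried through the limit computation without incident.

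For the first identity I would start from
\begin{equation*}
    \log\cE^{\lambda,\bo}(K) = \lim_{q\to 0^+} \left(1+\tfrac{d}{q}\right)\log\frac{\as_q^{\lambda,\bo}(K^\bo)}{\as_0^{\lambda,\bo}(K^\bo)},
\end{equation*}
recast this as a $0/0$ indeterminate $\lim_{q\to 0^+}\log(\as_q^{\lambda,\bo}(K^\bo)/\as_0^{\lambda,\bo}(K^\bo))/(q/(q+d))$, and apply L'Hôpital exactly as in Theorem \ref{thm:KL} to get
\begin{equation*}
    \log\cE^{\lambda,\bo}(K) = \lim_{q\to 0^+}\frac{(q+d)^2}{d\,\as_q^{\lambda,\bo}(K^\bo)}\frac{\dint}{\dint q}\as_q^{\lambda,\bo}(K^\bo).
\end{equation*}
Setting $g(\bu):=H_{d-1}^\lambda(K^\bo,\bu)/f_\bo^\lambda(K^\bo,\bu)^{d+1}$, differentiating $g(\bu)^{q/(d+q)}$ in $q$ under the integral yields the factor $\log g(\bu) \cdot d/(d+q)^2$; at $q=0$ all explicit $q$-factors cancel and the fixed weight $f_\bo^\lambda(K^\bo,\cdot)$ survives, producing the first claimed identity.

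For the second identity, under the $\cC^{1,1}$ hypothesis, I would first invoke the duality $\as_q^{\lambda,\bo}(K^\bo)=\as_{d^2/q}^{\lambda,\bo}(K)$ from Theorem \ref{Dual} and substitute $p=d^2/q$ to rewrite
\begin{equation*}
    \cE^{\lambda,\bo}(K)=\lim_{p\to\infty}\left(\frac{\as_p^{\lambda,\bo}(K)}{\as_\infty^{\lambda,\bo}(K)}\right)^{1+p/d},
\end{equation*}
together with $\as_\infty^{\lambda,\bo}(K)=\as_0^{\lambda,\bo}(K^\bo)$. A second L'Hôpital, now applied to $(1+p/d)\log(\as_p^{\lambda,\bo}(K)/\as_\infty^{\lambda,\bo}(K))$ as $p\to\infty$, and differentiating $h(\bu)^{p/(d+p)}$ with $h(\bu):=H_{d-1}^\lambda(K,\bu)/f_\bo^\lambda(K,\bu)^{d+1}$, yields the second identity after noting that $h(\bu)^{p/(d+p)}\to h(\bu)$ in the limit.

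The only technical point is the interchange of derivative (or limit) and integral in the two L'Hôpital steps. For $q$ near $0^+$ and $p$ near $+\infty$ the exponents $q/(d+q)$ and $p/(d+p)$ remain in compact subintervals of $[0,1)$; in the $\cC^{1,1}$ case $H_{d-1}^\lambda(K,\cdot)$ is essentially bounded on $\bd K$, and for the first formula (which does not assume smoothness) one may restrict to the subset of $\bd K^\bo$ where $g$ is finite via the selection procedure of Hug \cite{Hug1:1996}. In both cases $g^{q/(d+q)}|\log g|$ and $h^{p/(d+p)}|\log h|$ admit uniform integrable majorants, and dominated convergence applies. This is precisely the technical justification implicit in the proof of Theorem \ref{thm:KL}, so no new difficulty arises.
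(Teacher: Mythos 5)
Your proposal is correct and follows essentially the same route as the paper, which in fact omits the proof of Theorem \ref{thm:lambda_PW} with the remark that it is analogous to Theorem \ref{thm:KL}: the same L'H\^opital computation applied to the $q\to 0^+$ limit for the first identity, and to the $p\to\infty$ form (obtained via the duality of Theorem \ref{Dual} together with $\as_\infty^{\lambda,\bo}(K)=\as_0^{\lambda,\bo}(K^\bo)$) for the second, with the $q$-independent weight $f_\bo^\lambda$ carried through unchanged. Your added remarks on justifying the interchange of derivative and integral make explicit what the paper leaves implicit, so nothing is missing.
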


Consequently we define the real-analytic entropy $E_{PW}^{\lambda,\bo}(K)$ by
\begin{equation*}
    E_{PW}^{\lambda,\bo}(K) := -\log  \cE^{\lambda,\bo}(K).
\end{equation*}

We obtain the following  corollary to Theorem \ref{thm:limit_center} and Theorem \ref{thm:lambda_PW}.
\begin{corollary}
    Let $\overline{K}\subset \R^d$ be a convex body that contains the origin in the interior. For $\lambda \geq 0$ and a fixed point $\bo\in \Sp^d(\lambda)$ we consider $\R^d$ as projective model of $\Sp^d(\lambda)$ around $\bo$.
    Then
    \begin{equation*}
        \lim_{\lambda \to 0^+} \cE^{\lambda,\bo}(\overline{K}) = \cE(\overline{K}) \qquad \text{and} \qquad 
        \lim_{\lambda \to 0^+}  E_{PW}^{\lambda,\bo}(\overline{K}) = E_{PW}(\overline{K}).
    \end{equation*}
    Moreover,
    \begin{equation*}
        \lim_{\lambda\to 1} \cE^{\lambda,\bo}(\overline{K}) = \cE^s(\overline{K}) \qquad \text{and} \qquad 
        \lim_{\lambda\to 1} E_{PW}^{\lambda,\bo}(\overline{K}) = E^s(\overline{K}).
    \end{equation*}
\end{corollary}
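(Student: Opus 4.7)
My strategy is to start from the explicit integral representation of $\log \cE^{\lambda,\bo}(\overline{K})$ supplied by Theorem \ref{thm:lambda_PW} and pass the limits $\lambda\to 0^+$ and $\lambda\to 1$ through the integral. Since $E_{PW}^{\lambda,\bo} = -\log \cE^{\lambda,\bo}$, continuity of the logarithm reduces the two entropy statements to the two power-functional statements once $\cE^{\lambda,\bo}(\overline{K}) \to \cE(\overline{K})$ (resp.\ $\cE^s(\overline{K})$) has been established.

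I first identify the limiting integrands pointwise. At $\lambda=1$ one has $f_\bo^1\equiv 1$ and the $\bo$-polar $\overline{K}^{\bo}$ coincides (up to reflection through $\bo$, which is irrelevant for the integrals of geometric quantities) with the spherical dual $K^*$, so the integral in Theorem \ref{thm:lambda_PW} collapses term by term to
\begin{equation*}
    \log\cE^{1,\bo}(\overline{K}) = \frac{1}{P^s(K^*)}\int_{\bd K^*} \log H_{d-1}^s(K^*,\bu)\,\vol_{\bd K^*}^s(\dint\bu) = -E^s(\overline{K}),
\end{equation*}
matching Theorem \ref{thm:KL}. At $\lambda=0^+$, using $\tan_\lambda \to \mathrm{id}$ and that $\bo$ is the origin, a direct computation from the definition gives $f_\bo^0(K,\bu) = d_0(\bo,H(K,\bu)) = \bu\cdot\bn_K(\bu)$, while $H_{d-1}^0 = H_{d-1}^e$ and $\overline{K}^{\bo} = \overline{K}^\circ$. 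The cancellation $H_{d-1}^e/f_\bo^{d+1} = \kappa_0$ then turns the logarithmic factor into $\log\kappa_0(\overline{K}^\circ,\bu)$, and the divergence theorem identifies
\begin{equation*}
    \as_0^{0,\bo}(\overline{K}^\circ) = \int_{\bd\overline{K}^\circ}(\bu\cdot\bn_{\overline{K}^\circ}(\bu))\,\vol_{\bd\overline{K}^\circ}^e(\dint\bu) = d\,\vol_d^e(\overline{K}^\circ),
\end{equation*}
so that Theorem \ref{thm:lambda_PW} yields $\log\cE^{0,\bo}(\overline{K}) = \frac{1}{\vol_d^e(\overline{K}^\circ)}\int_{\bd\overline{K}^\circ}[\log\kappa_0(\overline{K}^\circ,\bu)]\,V_{\overline{K}^\circ}(\dint\bu) = -E_{PW}(\overline{K})$ by the definition recalled in Section \ref{sec:entropy}.

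The main technical step is justifying the interchange of limit and integral. For $\overline{K}$ of class $\cC^{1,1}$ this is immediate by dominated convergence: since $\bo$ lies in the interior of $\overline{K}$, the tangent hyperplanes to $\overline{K}^{\bo}$ stay uniformly bounded away from $\bo$ and from infinity, so $f_\bo^\lambda(\overline{K}^{\bo},\cdot)$ is bounded above and below by positive constants uniformly in $\lambda$ on a neighborhood of the target endpoint; likewise $H_{d-1}^\lambda(\overline{K}^{\bo},\cdot)$ is continuous in $\lambda$ with values in a fixed compact subset of $(0,\infty)$, giving bounded, hence integrable, dominating functions for both the logarithmic integrand and the normalizing factor. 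For general $\overline{K}$, one can alternatively interchange the two limits $\lim_\lambda\lim_{q\to 0^+}$ in the definition of $\cE^{\lambda,\bo}$: the monotonicity in $q$ of $(\as_q^{\lambda,\bo}/\as_0^{\lambda,\bo})^{1+d/q}$ from Theorem \ref{new-lambda_inequalities} provides uniform control, and the inner limit $\lim_\lambda \as_q^{\lambda,\bo}(\overline{K}^{\bo}) = \as_q(\overline{K}^\circ)$ (resp.\ $\Omega_q^s(\overline{K})$) is exactly Theorem \ref{thm:limit_center}. Handling general convex bodies via an approximation by $\cC^{1,1}$ bodies, combined with upper semi-continuity of $\cE^{\lambda,\bo}$ in $\overline{K}$, is the point where the most care is required.
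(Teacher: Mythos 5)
Your proposal is correct and follows essentially the same route as the paper, which states this corollary without a written proof as an immediate consequence of Theorem \ref{thm:limit_center} (pointwise convergence of $\as_q^{\lambda,\bo}$ as $\lambda\to 0^+$ or $\lambda\to 1$) and Theorem \ref{thm:lambda_PW} (the integral representation of $\log\cE^{\lambda,\bo}$). Your identifications of the limiting integrands ($f_\bo^0(K,\cdot)=\bu\cdot\bn_K(\bu)$, $H_{d-1}^e/f^{d+1}=\kappa_0$, $\as_0^{0,\bo}(\overline{K}^\circ)=d\vol_d^e(\overline{K}^\circ)$, and $f_\bo^1\equiv 1$ with $\overline{K}^{\bo}=-K^*$) are exactly what makes the two theorems combine, and your dominated-convergence justification supplies the interchange of limits that the paper leaves implicit.
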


Thus the real-analytic entropy $E_{PW}^{\lambda,\bo}$ connects the centro-affine entropy $E_{PW}$ in $\R^d$ with the spherical curvature entropy $E^s$ in $\S^d$.

\begin{example}
    For $\lambda>0$ and a  geodesic ball $C^\lambda_{\bo}(\alpha)$ centered in $\bo\in\Sp^d(\lambda)$ of radius $\alpha\in(0,\frac{\pi}{2\sqrt{\lambda}})$ we have
    \begin{equation*}
        \cE^{\lambda,\bo}(C_{\bo}^\lambda(\alpha)) 
            = \frac{\as^{\lambda,\bo}_0(C^\lambda_{\bo}(\alpha))}{\as^{\lambda,\bo}_\infty(C^\lambda_\bo(\alpha))}
            = (\tan_\lambda \alpha)^{d-1} \left(\lambda (\cos_\lambda \alpha)^2 + (\sin_\lambda\alpha)^2\right)^{\frac{d+1}{2}},
    \end{equation*}
    and 
    \begin{equation*}
        E^{\lambda,\bo}_{PW} (C_{\bo}^\lambda(\alpha)) = -(d-1) (\log \tan_\lambda\alpha) - \frac{d+1}{2} \log\left(\lambda (\cos_\lambda \alpha)^2 + (\sin_\lambda\alpha)^2\right).
    \end{equation*}
    We have
    \begin{equation*}
        \lim_{\lambda\to 0^+} E_{PW}^{\lambda,\bo}(C_{\bo}^\lambda(\alpha)) = -2d \log \alpha, \qquad \text{and} \qquad
        \lim_{\lambda\to 1} E_{PW}^{\lambda,\bo}(C_{\bo}^\lambda(\alpha)) = -(d-1)\log \tan\alpha.
    \end{equation*}

\end{example}

\subsection{Real-analytic extension of \texorpdfstring{$E_C$}{Gaussian entropy} to \texorpdfstring{$E^s$}{spherical curvature entropy}}

For a proper  convex body  $K \subset  \Sp^d(\lambda)$ we now consider another real-analytic extension of the spherical $L_p$-floating area $\Omega_p^s$ by
\begin{equation*}
    \Omega^\lambda_p(K) := \int_{\bd K} H_{d-1}^\lambda(K,\bu)^{\frac{p}{d+p}} \, \vol_{\bd K}^\lambda(\dint \bu).
\end{equation*}
Clearly, for $\lambda =1$ we get the $L_p$-floating area $\Omega_p^s$ \eqref{lpfloatarea}, which, as noted above,  also coincides with $\as_p^{1,\bo}$.
Observe also, that for $p=1$ we recover the $\lambda$-floating area  $\Omega^\lambda (K)$ \eqref{eqn:lambda_floating_area_projected}.

\begin{example}
    For a geodesic ball $C^\lambda(\alpha)$ of radius $\alpha \in (0,\frac{\pi}{2\sqrt{\lambda}}]$ we have $H^\lambda_{d-1}(C^\lambda(\alpha),\cdot) \equiv (\tan_\lambda \alpha)^{-(d-1)}$,
    \begin{align*}
        \Omega^\lambda_0(C^\lambda(\alpha)) &= P^\lambda(C^\lambda(\alpha)) = \omega_{d-1} (\sin_\lambda \alpha)^{d-1},\\
        \Omega^\lambda_{\infty}(C^\lambda(\alpha)) & = \omega_{d-1} (\cos_\lambda \alpha)^{d-1} 
            = \lambda^{\frac{d-1}{2}} P^\lambda\left(C^\lambda\left(\frac{\pi}{2\sqrt{\lambda}}-\alpha\right)\right),
    \end{align*}
    and 
    \begin{align*}
        \Omega^\lambda_p(C^\lambda(\alpha)) 
            &= \omega_{d-1} (\sin_\lambda \alpha)^{\frac{d(d-1)}{d+p}} (\cos_\lambda \alpha)^{\frac{p(d-1)}{d+p}}\\
            &=  \lambda^{\frac{(d-1)p}{2(d+p)}}  P^\lambda(C^\lambda(\alpha))^{\frac{d}{d+p}} P^\lambda\left(C^\lambda\left(\frac{\pi}{2\sqrt{\lambda}}-\alpha\right)\right)^{\frac{p}{d+p}}.
    \end{align*}
    Here we recall \eqref{eqn:lambda_sin} for the definition of $\sin_{\lambda}$ and $\cos_\lambda$.
\end{example}

We may use the sphere $\S^d(\lambda) = \frac{1}{\sqrt{\lambda}}\S^{d}\subset \R^{d+1}$ as model for $\Sp^d(\lambda)$, see \eqref{spaceforms}. Then the dual body $K^*\subset \S^d(\lambda)$ of a convex body $K\subset \S^d(\lambda)$ is defined by
\begin{equation*}
    K^* := \frac{1}{\sqrt{\lambda}} (\sqrt{\lambda} K)^*,
\end{equation*}
where $\sqrt{\lambda}K = \{\sqrt{\lambda} \bx : \bx \in K\}\subset \S^d\subset \R^{d+1}$ is a spherical convex body on $\S^d$. The rescaled Gauss map $-\frac{\bn_K}{\sqrt{\lambda}}$ maps $\bd K\subset \S^d(\lambda)$ to $\bd K^*\subset \S^d(\lambda)$. Thus
\begin{equation*}
    H_{d-1}^\lambda(K,\bx) \, H_{d-1}^\lambda\left(K^*, \bx^*\right) = \lambda^{d-1},
\end{equation*}
for almost all $\bx\in\bd K$, where we set $\bx^* := -\frac{\bn_K(\bx)}{\sqrt{\lambda}}\in\bd K^*$.

\begin{example}
    For the geodesic ball $C^\lambda(\alpha)\subset \S^d(\lambda)$ with $\alpha \in (0,\frac{\pi}{2\sqrt{\lambda}})$ we have
    \begin{equation*}
        C^\lambda(\alpha)^* = C^\lambda\left(\overline{\alpha}\right) \qquad \text{where $\overline{\alpha}=\frac{\pi}{2\sqrt{\lambda}}-\alpha$},
    \end{equation*}
    and 
    \begin{equation*}
        H_{d-1}^\lambda(C^\lambda(\alpha),\cdot) \, H_{d-1}^\lambda(C^\lambda(\overline{\alpha}),\cdot) 
            = (\tan_\lambda \alpha)^{-(d-1)} (\tan_\lambda \overline{\alpha})^{-(d-1)} = \lambda^{d-1}.
    \end{equation*}
\end{example}

Now the following duality relation, similar to Theorem \ref{thm:duality} holds.
\begin{theorem}\label{thm:omega_lambda_duality}
    Let $d\geq 2$, $p\in \R\setminus\{-d\}$, $\lambda>0$ and let $K\subset \Sp^d(\lambda)$ be a convex body that is contained in an open half-space. If $p< 0$, then we additionally assume that $K$ is of class $\cC^2_+$ and if $p\in\{0,\infty\}$ then we assume that $K$ is of class $\cC^{1,1}$. Then
    \begin{equation*}
        \Omega_p^\lambda(K^*) = \lambda^{\frac{(d-1)(p-d)}{2(d+p)}} \Omega_{d^2/p}^\lambda(K).
    \end{equation*}
    In particular,
    \begin{equation*}
        \Omega_\infty^\lambda(K) = \lambda^{\frac{d-1}{2}} P^\lambda(K^*),
    \end{equation*}
    if $K$ is of class $\cC^{1,1}$.
\end{theorem}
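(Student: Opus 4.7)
The plan is to reduce to the spherical case $\lambda=1$ already handled by Theorem \ref{thm:duality}, by rescaling the ambient model $\S^d(\lambda) = \frac{1}{\sqrt{\lambda}}\S^d$ to the unit sphere. Under the radial rescaling $\bx \mapsto \sqrt{\lambda}\,\bx$, the body $K \subset \S^d(\lambda)$ maps to $\tilde{K} := \sqrt{\lambda}\,K \subset \S^d$, and by the very definition of the dual used in the excerpt, $\sqrt{\lambda}\,K^* = (\sqrt{\lambda}\,K)^* = \tilde{K}^*$ (where the last dual is the spherical one on $\S^d$). So everything reduces to comparing $\Omega_p^\lambda$ on $\S^d(\lambda)$ with $\Omega_p^s$ on $\S^d$.

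First I would record the scaling laws. Since the map scales lengths by $\sqrt{\lambda}$, the $(d-1)$-dimensional Hausdorff measure on boundary hypersurfaces transforms as $\vol_{\bd \tilde{K}}^s = \lambda^{(d-1)/2}\,\vol_{\bd K}^\lambda$ (after pullback), while principal curvatures scale inversely, giving
\begin{equation*}
H_{d-1}^s(\tilde{K},\sqrt{\lambda}\bx) = \lambda^{-(d-1)/2}\,H_{d-1}^\lambda(K,\bx).
\end{equation*}
Substituting into the definition \eqref{lpfloatarea} yields the clean identity
\begin{equation*}
\Omega_p^\lambda(K) = \lambda^{-\frac{(d-1)d}{2(d+p)}}\,\Omega_p^s(\tilde{K}),
\end{equation*}
valid for all $p \in \R\setminus\{-d\}$ under the corresponding regularity hypotheses on $K$ (which transfer to $\tilde{K}$ automatically).

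Next I would apply this identity to $K^*$, obtaining $\Omega_p^\lambda(K^*) = \lambda^{-(d-1)d/(2(d+p))}\,\Omega_p^s(\tilde{K}^*)$, then invoke Theorem \ref{thm:duality} on $\S^d$ to replace $\Omega_p^s(\tilde{K}^*) = \Omega_{d^2/p}^s(\tilde{K})$, and finally apply the scaling identity in reverse to rewrite $\Omega_{d^2/p}^s(\tilde{K}) = \lambda^{(d-1)(d^2/p)/(2(d+d^2/p))}\,\Omega_{d^2/p}^\lambda(K) = \lambda^{(d-1)p/(2(d+p))}\,\Omega_{d^2/p}^\lambda(K)$. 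Combining the two exponents,
\begin{equation*}
-\frac{(d-1)d}{2(d+p)} + \frac{(d-1)p}{2(d+p)} = \frac{(d-1)(p-d)}{2(d+p)},
\end{equation*}
which is exactly the claimed power of $\lambda$. The $p=\infty$ special case then follows by letting $p\to\infty$ (so $d^2/p\to 0$, $\Omega_{d^2/p}^\lambda(K) \to P^\lambda(K)$, and the exponent tends to $(d-1)/2$), after swapping $K \leftrightarrow K^*$ and using $K^{**}=K$; the $\cC^{1,1}$ hypothesis ensures, as in the spherical case, that $\Omega_\infty^\lambda$ is a genuine integral over $\bd K$ rather than only bounding the singular part.

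The only nontrivial bookkeeping is the derivation of the two scaling laws (Hausdorff measure and Gauss--Kronecker curvature under uniform radial scaling of a sphere), but these are standard facts about submanifolds of Euclidean spheres with smooth or generalized second fundamental form, and they hold for all the regularity classes listed in the statement. Thus the proof is essentially a change of variables followed by a direct application of Theorem \ref{thm:duality}; no new curvature-measure machinery is required beyond what has already been set up.
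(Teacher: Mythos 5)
Your proof is correct, and it takes a slightly different (though closely related) route from the paper's. The paper's own proof is a one-line adaptation of Theorem \ref{thm:duality}: it re-runs the Gauss-map change of variables directly on $\S^d(\lambda)$, using the rescaled Gauss map $\bx\mapsto\bx^*=-\bn_K(\bx)/\sqrt{\lambda}$ whose (approximate) Jacobian is $J^{\bd K}(\bx^*)(\bx)=\lambda^{-\frac{d-1}{2}}H_{d-1}^\lambda(K,\bx)$, combined with the reciprocity $H_{d-1}^\lambda(K,\bx)\,H_{d-1}^\lambda(K^*,\bx^*)=\lambda^{d-1}$ recorded just before the theorem. You instead black-box the unit-sphere duality of Theorem \ref{thm:duality} and transport it through the dilation $\bx\mapsto\sqrt{\lambda}\,\bx$, using the two scaling laws for Hausdorff measure and Gauss--Kronecker curvature; your exponent bookkeeping is right, and the compatibility $\sqrt{\lambda}\,K^*=(\sqrt{\lambda}K)^*$ is exactly how the paper defines the dual on $\S^d(\lambda)$, so the reduction is legitimate. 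What your route buys is that none of the curvature-measure machinery (restriction to normal boundary points for $p>0$, bi-Lipschitz Gauss map for $\cC^2_+$, approximate Jacobians) has to be re-justified in the $\lambda$-model: it is all inherited from the spherical theorem once the two scaling laws are checked, and those laws do hold for the generalized curvatures since dilations commute with second-order differentiability. Two small points of hygiene: for the ``in particular'' clause it is cleaner, within your own framework, to evaluate your scaling identity at $p=\infty$ (where the prefactor is $\lambda^{0}=1$) and quote the spherical identity $\Omega_\infty^s(\tilde K)=P^s(\tilde K^*)$ for $\cC^{1,1}$ bodies, rather than swapping $K\leftrightarrow K^*$ in the main formula, since the swap formally places the $\cC^{1,1}$ hypothesis on $K^*$ rather than on $K$; and the limit $p\to\infty$ you invoke is justified by dominated convergence because $H_{d-1}^\lambda(K,\cdot)$ is bounded for $\cC^{1,1}$ bodies, though running the $p=\infty$ case of the change of variables directly avoids the limit altogether.
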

\begin{proof}
    The proof is analogous to the proof of Theorem \ref{thm:duality} where we note that the Jacobian 
        $J^{\bd K}(\bx^*)(\bx)=\lambda^{-\frac{d-1}{2}} H_{d-1}^\lambda(K,\bx)$.
\end{proof}

\begin{remark}
    One could also use $\Sp^d(1/\lambda)$ to define the dual body $\lambda K^*=\sqrt{\lambda}(\sqrt{\lambda}K)^*\subset \Sp^d(1/\lambda)$, see \cite[Sec.\ 4.3]{BW:2024}. Note that for $\bx\in\bd K\subset \Sp^d(\lambda)$
    \begin{equation*}
        H_{d-1}^{1/\lambda}(\lambda K,\lambda\bx) = \lambda^{-(d-1)} H_{d-1}^{\lambda}(K,\bx).
    \end{equation*}
    Thus
    \begin{equation*}
        \Omega_p^{1/\lambda}(\lambda K)= \lambda^{\frac{d(d-1)}{d+p}} \Omega_p^{\lambda}(K),
    \end{equation*}
    and Theorem \ref{thm:omega_lambda_duality} yields
    \begin{equation*}
        \Omega^{1/\lambda}_p(\lambda K^*) = \lambda^{\frac{d-1}{2}} \Omega_{d^2/p}^{\lambda}(K).
    \end{equation*}
\end{remark}

By fixing a projective model we find the following limit in the Euclidean space.
\begin{theorem}\label{thm:limit_lambda}
    Let $\overline{K}\subset \R^d$ be a convex body. For $\lambda> 0$ we consider $\R^d$ as a projective model of an open half-space of $\Sp^d(\lambda)$. Then
    \begin{equation*}
        \lim_{\lambda\to 0^+} \Omega^\lambda_p(\overline{K}) = \int_{\bd \overline{K}} H_{d-1}^e(\overline{K},\bx)^{\frac{p}{d+p}}\, \vol_{\bd \overline{K}}^e(\dint \bx).
    \end{equation*}
\end{theorem}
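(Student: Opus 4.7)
The plan is to rewrite $\Omega_p^\lambda(\overline{K})$ as a single integral over $\bd\overline{K}$ against the Euclidean surface measure, observe that the integrand converges pointwise $\vol_{\bd\overline{K}}^e$-a.e.\ to $H_{d-1}^e(\overline{K},\bx)^{p/(d+p)}$ as $\lambda\to 0^+$, and conclude by dominated convergence.

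Without loss of generality I take the projective chart to be arranged so that the origin of $\R^d$ lies strictly inside $\overline{K}$; the right-hand side is a translation-invariant Euclidean quantity, so this is merely a choice of chart. Then $h(\bx):=\bx\cdot\bn_{\overline{K}}(\bx)$ is positive for $\vol_{\bd\overline{K}}^e$-a.e.~$\bx\in\bd\overline{K}$ and $\|\bx\|$ is bounded above and below by positive constants on $\bd\overline{K}$. In the projective model one has $\tan_\lambda d_\lambda(\bo,H(K,\bx))=h(\bx)$, hence $f_\bo^\lambda(\overline{K},\bx)=\sqrt{(\lambda+h^2)/(1+\lambda h^2)}$. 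Following exactly the same chain of identities as in the proof of Theorem \ref{Dual}, I obtain explicit transformation factors
\begin{equation*}
    H_{d-1}^\lambda(K,\bu)=A_\lambda(\bx)\,H_{d-1}^e(\overline{K},\bx),\qquad \vol_{\bd K}^\lambda(\dint\bu)=B_\lambda(\bx)\,\vol_{\bd\overline{K}}^e(\dint\bx),
\end{equation*}
with
\begin{equation*}
A_\lambda(\bx)=\left(\frac{(\lambda+h^2)(1+\lambda\|\bx\|^2)}{(1+\lambda h^2)(h^2+\lambda\|\bx\|^2)}\right)^{\!(d+1)/2},\qquad
B_\lambda(\bx)=\frac{\sqrt{(1+\lambda h^2)(h^2+\lambda\|\bx\|^2)}}{\sqrt{\lambda+h^2}\,(1+\lambda\|\bx\|^2)^{d/2}}.
\end{equation*}
As a sanity check, for $p=1$ one has $A_\lambda^{1/(d+1)}B_\lambda=(1+\lambda\|\bx\|^2)^{-(d-1)/2}$, which exactly reproduces \eqref{eqn:lambda_floating_area_projected}. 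At each boundary point with $h(\bx)>0$ both $A_\lambda(\bx)\to 1$ and $B_\lambda(\bx)\to 1$ as $\lambda\to 0^+$.

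Substituting gives
\begin{equation*}
\Omega_p^\lambda(\overline{K})=\int_{\bd\overline{K}} H_{d-1}^e(\overline{K},\bx)^{\frac{p}{d+p}}\,A_\lambda(\bx)^{\frac{p}{d+p}}\,B_\lambda(\bx)\,\vol_{\bd\overline{K}}^e(\dint\bx),
\end{equation*}
and the integrand converges $\vol_{\bd\overline{K}}^e$-a.e.\ to $H_{d-1}^e(\overline{K},\bx)^{p/(d+p)}$. Because $h$ and $\|\bx\|$ are bounded above and below by positive constants on $\bd\overline{K}$, each of the four factors appearing in $A_\lambda,B_\lambda$ is uniformly bounded on $[0,\lambda_0]\times\bd\overline{K}$ for any small $\lambda_0>0$; hence $A_\lambda^{p/(d+p)}B_\lambda\leq C$ for a constant $C=C(\overline{K},\lambda_0,p)$. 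For $p>0$ the resulting majorant $C\,H_{d-1}^e(\overline{K},\cdot)^{p/(d+p)}$ is integrable against $\vol_{\bd\overline{K}}^e$ because the classical $L_p$-affine surface area of $\overline{K}$ is finite; for $p=0$ the majorant is the constant $C$, integrable against the finite Euclidean perimeter; and for $p<0$ the standing $\cC^2_+$ hypothesis on $\overline{K}$ provides a uniform pointwise bound on $H_{d-1}^e(\overline{K},\cdot)^{p/(d+p)}$. In each case dominated convergence yields the claim.

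The main technical step will be the bookkeeping behind the uniform bound on $A_\lambda^{p/(d+p)}B_\lambda$: the delicate point is that the denominators $1+\lambda h^2$, $h^2+\lambda\|\bx\|^2$, $\lambda+h^2$, and $1+\lambda\|\bx\|^2$ must stay uniformly bounded away from zero on $[0,\lambda_0]\times\bd\overline{K}$, which in turn depends crucially on the chart being centred at an interior point of $\overline{K}$. Once this is secured, the pointwise limits $A_\lambda\to 1$ and $B_\lambda\to 1$ together with dominated convergence give the desired identification of the limit.
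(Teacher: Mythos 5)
Your overall strategy coincides with the paper's: rewrite $\Omega_p^\lambda(\overline{K})$ as an integral over $\bd\overline{K}$ against $\vol_{\bd\overline{K}}^e$ with $\lambda$-dependent weights, observe that the weights tend to $1$, and pass to the limit (the paper does this in one line by quoting the explicit projective-model formula from \cite{BW:2018}). The problem is that your explicit transformation factors are wrong. The correct identities in the projective model are
\begin{equation*}
    H_{d-1}^\lambda(K,\bu)=\left(\frac{1+\lambda\|\bx\|^2}{1+\lambda h(\bx)^2}\right)^{\frac{d+1}{2}}H_{d-1}^e(\overline{K},\bx),
    \qquad
    \vol_{\bd K}^\lambda(\dint\bu)=\frac{\sqrt{1+\lambda h(\bx)^2}}{(1+\lambda\|\bx\|^2)^{d/2}}\,\vol_{\bd\overline{K}}^e(\dint\bx),
\end{equation*}
with $h(\bx)=\bx\cdot\bn_{\overline{K}}(\bx)$. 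Your $A_\lambda$ and $B_\lambda$ each carry a spurious factor built from $\lambda+h^2$ and $h^2+\lambda\|\bx\|^2$, apparently caused by reading the norm of the polar point in the proof of Theorem \ref{Dual} as $\|\bx\|/h$ rather than $1/h$. These spurious factors cancel exactly when the exponent is $\tfrac{1}{d+1}$, which is why your $p=1$ sanity check against \eqref{eqn:lambda_floating_area_projected} passes; but for $p\neq 1$ your displayed formula for $\Omega_p^\lambda(\overline{K})$ is false at fixed $\lambda>0$. For instance, testing it on a geodesic ball at $\lambda=1$ leaves an extra factor $(2\sin^2\alpha)^{\frac{d(1-p)}{2(d+p)}}$ against \eqref{eqn:ball_p_float}.

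The error happens to be harmless for the limit, since the spurious factor also tends to $1$ pointwise where $h>0$, so your dominated-convergence conclusion is still reached --- but only by accident, and at a cost. The incorrect factor is the sole reason you must place the origin in the interior of $\overline{K}$ (to keep $h^2+\lambda\|\bx\|^2$ bounded away from zero), and your justification for that reduction is itself not sound: $\Omega_p^\lambda$ is \emph{not} translation-invariant in the chart for fixed $\lambda>0$, so translating $\overline{K}$ changes the left-hand side of the statement even though it preserves the right-hand side. With the correct identities the weight is
\begin{equation*}
    (1+\lambda\|\bx\|^2)^{\frac{p-d^2}{2(d+p)}}\,(1+\lambda h(\bx)^2)^{\frac{d(1-p)}{2(d+p)}},
\end{equation*}
which is bounded between positive constants uniformly in $\lambda\in(0,\lambda_0]$ and converges to $1$ uniformly on $\bd\overline{K}$, with no assumption on the position of the origin and no case analysis on the sign of $h$. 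Once you replace your $A_\lambda$, $B_\lambda$ by the correct factors, the remainder of your domination argument (finiteness of $\as_p(\overline{K})$ for $p>0$, of the Euclidean perimeter for $p=0$, and the $\cC^2_+$ hypothesis for $p<0$) goes through as written and recovers the paper's proof.
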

\begin{proof}
     By \cite[Eq.~(4.13)]{BW:2018}, we find
     \begin{equation*}
        \Omega_p^\lambda(\overline{K}) 
            =\int_{\bd \overline{K}} H_{d-1}^e(\overline{K},\bx)^{\frac{p}{d+p}} 
                (1+\lambda\|\bx\|_2^2)^{-\frac{d(d-p)}{2(d+p)}} \left(1+\lambda (\bx \cdot \bn_{\overline{K}}(\bx))^2\right)^{\frac{d(1-p)}{2(d+p)}}
                    \, \vol_{\bd \overline{K}}^e(\dint\bx).
    \end{equation*}
    Taking the limit $\lambda\to 0^+$ yields the result.
\end{proof}

\begin{remark}
    We see that for $p=0$ the spherical surface area $P^s(\overline{K}) = \lim_{\lambda\to 1} \Omega^{\lambda}_0(\overline{K})$ can be related to the Euclidean surface area $P(\overline{K}) = \lim_{\lambda\to 0^+} \Omega^{\lambda}_0(\overline{K})$.
\end{remark}

Again, Theorem \ref{thm:sphere_inequalities} naturally extends to $\Omega_p^\lambda$.
  \begin{theorem}\label{thm:lambda_inequalities}
    Let $d\geq 2$, $\lambda> 0$, and $p,q,r\in\mathbb{R}\setminus\{-d\}$. 
    Let $K\subset \Sp^d(\lambda)$ be a convex body that is contained in an open half-space.
    Additionally assume that $K$ is of class $\cC^2_+$ if $\min\{p,q,r\}<0$.
    
    If $t:= \frac{(q-r)(d+p)}{(p-r)(d+q)}>1$ and $t':=\frac{t}{t-1}$, then
    \begin{equation}
        \Omega_p^\lambda(K) \leq \Omega_q^\lambda(K)^{1/t}\, \Omega_r^\lambda(K)^{1/t'}.
    \end{equation}
    with equality if $K$ is a geodesic ball. Moreover, if $K$ is of class $\cC^{1,1}$ and we have equality, then $K$ is a geodesic ball.
    
    For $r=0$, we have that
    \begin{equation}\label{eqn:monotone_inc_omega}
        \left(\frac{\Omega_p^\lambda(K)}{P^\lambda(K)}\right)^{1+\frac{d}{p}} 
            \leq \left(\frac{\Omega_q^\lambda(K)}{P^\lambda(K)}\right)^{1+\frac{d}{q}},
    \end{equation}
    for all $p,q$ such that $0<p<q$ or $p<q<-d$ or $p>0, q<-d$.
    With the same equality conditions as above.
    
    Furthermore, for $r\to \infty$, we find that
    \begin{equation}\label{eqn:lambda-monotone_omega}
        \left(\frac{\Omega_p^\lambda(K)}{\Omega_\infty^\lambda(K)}\right)^{1+\frac{p}{d}} \quad \text{is monotone decreasing for $p\geq 0$.}
    \end{equation}
    
    Finally, for $q\to\infty$ and $r=0$, this yields that for $p>0$,
    \begin{equation}\label{eqn:omega_lambda_ineq}
        \Omega^\lambda_p(K) \leq \lambda^{\frac{(d-1)p}{2(d+p)}} P^\lambda(K)^{\frac{d}{d+p}} P^\lambda(K^*)^{\frac{p}{d+p}},
    \end{equation}
    and for $p<0$, $p\neq -d$, the inequality \eqref{eqn:omega_lambda_ineq} is reversed.
    Equality holds if and only if $K$ is a geodesic ball.
\end{theorem}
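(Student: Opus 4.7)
The plan is to mirror the proof of Theorem \ref{thm:sphere_inequalities} nearly verbatim; the only novelty is the $\lambda$-dependent factor coming from Theorem \ref{thm:omega_lambda_duality}. First I would derive the master H\"older-type inequality, from which all remaining statements in the theorem follow as specializations.

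For the master inequality, I would set $1/t' = (q-p)(d+r)/((q-r)(d+p))$, verify the identities $1/t+1/t'=1$ and
\[
\frac{r}{(d+r)\,t'}+\frac{q}{(d+q)\,t}=\frac{p}{d+p},
\]
and then apply H\"older's inequality on $\bd K$ with measure $\vol_{\bd K}^\lambda$ to the factorization
\[
H_{d-1}^\lambda(K,\bu)^{\frac{p}{d+p}} = \bigl(H_{d-1}^\lambda(K,\bu)^{\frac{r}{d+r}}\bigr)^{1/t'}\bigl(H_{d-1}^\lambda(K,\bu)^{\frac{q}{d+q}}\bigr)^{1/t}.
\]
Forward H\"older applies when $t,t'>0$; reverse H\"older, when one exponent is negative, is what produces the direction-flip in \eqref{eqn:omega_lambda_ineq} for $p<0$, $p\neq -d$. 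For a geodesic ball $H_{d-1}^\lambda(K,\cdot)$ is constant so equality holds; conversely, under the $\cC^{1,1}$ assumption the equality case of H\"older forces $H_{d-1}^\lambda(K,\cdot)$ to be a.e.\ constant on $\bd K$, and Kohlmann's rigidity result (which transfers from $\S^d$ to $\Sp^d(\lambda)$ via the gnomonic projection, since being a geodesic ball of constant curvature is preserved) then identifies $K$ as a geodesic ball, exactly as in the proof of Theorem \ref{thm:sphere_inequalities}.

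The passage to the remaining statements is now routine. The monotonicity \eqref{eqn:monotone_inc_omega} is the specialization $r=0$ rewritten as a statement about $(\Omega_p^\lambda/P^\lambda)^{1+d/p}$, and \eqref{eqn:lambda-monotone_omega} follows by letting $r\to\infty$ and using $\Omega_r^\lambda(K)^{d/(d+r)}\to \Omega_\infty^\lambda(K)$. For \eqref{eqn:omega_lambda_ineq} with $p>0$, I take $r=0$ and $q\to\infty$ in the master inequality to get $\Omega_p^\lambda(K)\leq P^\lambda(K)^{d/(d+p)}\Omega_\infty^\lambda(K)^{p/(d+p)}$, and then combine with the $\lambda$-analog $\Omega_\infty^\lambda(K)\leq \lambda^{(d-1)/2}P^\lambda(K^*)$ of \eqref{eqn:polar_perimter}. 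This last bound is the general (possibly-singular) form of the $\cC^{1,1}$ identity in Theorem \ref{thm:omega_lambda_duality}: the rescaled Gauss map $-\bn_K/\sqrt{\lambda}\colon \bd K\to \bd K^*$ carries Jacobian $\lambda^{-(d-1)/2}H_{d-1}^\lambda(K,\cdot)$, and the absolutely continuous part $C_0^a(K,\cdot)$ is dominated by the full curvature measure $C_0(K,\cdot)$ with precisely the scaling $\lambda^{(d-1)/2}$.

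The only obstacle is the careful bookkeeping of the power of $\lambda$: once the factor $\lambda^{(d-1)/2}$ is correctly tracked through the duality, the exponent in \eqref{eqn:omega_lambda_ineq} becomes $\lambda^{(d-1)p/(2(d+p))}$ as stated. The equality case of \eqref{eqn:omega_lambda_ineq} requires simultaneously equality in H\"older (giving $H_{d-1}^\lambda$ a.e.\ constant) and $\Omega_\infty^\lambda(K)=\lambda^{(d-1)/2}P^\lambda(K^*)$ (giving absolute continuity of $C_0(K,\cdot)$ with respect to $\vol_{\bd K}^\lambda$), and these together force $K$ to be a geodesic ball via Kohlmann's theorem, without needing to assume $\cC^{1,1}$ regularity a priori.
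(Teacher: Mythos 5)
Your proposal is correct and follows essentially the same route as the paper, which itself proves this theorem by repeating the Hölder argument of Theorem \ref{thm:sphere_inequalities} verbatim and invoking the duality relation of Theorem \ref{thm:omega_lambda_duality} to track the factor $\lambda^{(d-1)/2}$; your exponent bookkeeping, the use of Kohlmann's rigidity theorem in the equality case, and the treatment of the equality case of \eqref{eqn:omega_lambda_ineq} without a priori $\cC^{1,1}$ regularity all match the paper's argument. The only (immaterial) deviation is that you obtain \eqref{eqn:lambda-monotone_omega} directly by letting $r\to\infty$ in the master inequality, whereas the paper derives the corresponding spherical statement from \eqref{eqn:monotone_inc_omega} via duality.
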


Consequently we  define an entropy power functional by
\begin{equation*}
    \cE^\lambda(K) := \lambda^{-(d-1)} \lim_{q\to 0^+}  \left(\frac{\Omega_q^\lambda(K^*)}{P^\lambda(K^*)}\right)^{1+\frac{d}{q}}.
\end{equation*}
If $K$ is of class $\cC^{1,1}$, then by Theorem \ref{thm:omega_lambda_duality}
\begin{equation*}
    \cE^\lambda(K) = \lim_{p\to\infty} \left(\frac{\Omega_p^\lambda(K)}{\Omega_{\infty}^\lambda(K)}\right)^{1+\frac{p}{d}}.
\end{equation*}

By Theorem \ref{thm:lambda_inequalities} we have that 
\begin{equation}
    \cE^\lambda(K) \leq \frac{P^\lambda(K)}{\lambda^{\frac{d-1}{2}}P^\lambda(K^*)}\leq \frac{\Omega_0^\lambda(K)}{\Omega_{\infty}^\lambda(K)}.
\end{equation}

Again, analogous to Theorem \ref{thm:KL} we find the following
\begin{theorem}\label{thm:lambda_Chow}
Let $\lambda >0$ and let $K \subset  \Sp^d(\lambda)$ be a convex body that is contained in an open half-space. 
Then
\begin{equation*}
    \log \cE^{\lambda}(K) = \frac{1}{P^\lambda(K^*)} 
        \int_{\bd K^*} \log H_{d-1}^\lambda(K^*,\bu) \, \vol_{\bd K^*}^\lambda(\dint\bu) - (d-1)\log \lambda.
\end{equation*}
If $K$ is of class $\cC^{1,1}$, then also
\begin{equation*}
    \log  \cE^\lambda(K) 
    = -\frac{1}{\Omega_{\infty}^\lambda(K)} \int_{\bd K} H_{d-1}^\lambda(K,\bu) \left[\log H_{d-1}^\lambda(K,\bu)\right] \, \vol_{\bd K}^\lambda(\dint \bu).
\end{equation*}
\end{theorem}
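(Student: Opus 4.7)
The plan is to follow the approach of Theorem \ref{thm:KL} very directly, since the argument for $\lambda=1$ carries over essentially verbatim, with the extra factor $\lambda^{-(d-1)}$ in the definition of $\cE^\lambda(K)$ producing precisely the $-(d-1)\log\lambda$ correction.

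For the first identity, I would start from
\begin{equation*}
    \log \cE^\lambda(K) = -(d-1)\log\lambda + \lim_{q\to 0^+}\left(1+\frac{d}{q}\right)\log\frac{\Omega_q^\lambda(K^*)}{P^\lambda(K^*)}.
\end{equation*}
Since $\Omega_0^\lambda(K^*)=P^\lambda(K^*)$ the limit is of indeterminate form $\infty\cdot 0$, so I would rewrite it as $\lim_{q\to 0^+}\frac{(q+d)\log[\Omega_q^\lambda(K^*)/P^\lambda(K^*)]}{q}$ and apply L'Hospital's rule in $q$. Differentiating under the integral (using that $\frac{d}{dq}\frac{q}{d+q}=\frac{d}{(d+q)^2}$, which equals $\tfrac{1}{d}$ at $q=0$) one obtains $\frac{d}{dq}\Omega_q^\lambda(K^*)\big|_{q=0}=\tfrac{1}{d}\int_{\bd K^*}\log H_{d-1}^\lambda(K^*,\bu)\,\vol_{\bd K^*}^\lambda(\dint\bu)$, and the limit equals $\tfrac{1}{P^\lambda(K^*)}\int_{\bd K^*}\log H_{d-1}^\lambda(K^*,\bu)\,\vol_{\bd K^*}^\lambda(\dint\bu)$, giving the stated formula.

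For the second identity (under the $\cC^{1,1}$ hypothesis), the cleanest route is to derive it from the first by applying the spherical duality of Theorem \ref{thm:omega_lambda_duality}. I would use the bi-Lipschitz map $\bd K\to\bd K^*$, $\bx\mapsto\bx^*=-\bn_K(\bx)/\sqrt{\lambda}$, whose Jacobian (noted in the proof of Theorem \ref{thm:omega_lambda_duality}) is $\lambda^{-(d-1)/2}H_{d-1}^\lambda(K,\bx)$, together with the pointwise relation $H_{d-1}^\lambda(K^*,\bx^*)H_{d-1}^\lambda(K,\bx)=\lambda^{d-1}$, so that $\log H_{d-1}^\lambda(K^*,\bx^*)=(d-1)\log\lambda-\log H_{d-1}^\lambda(K,\bx)$. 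After the change of variables the integral becomes
\begin{equation*}
    \int_{\bd K^*}\log H_{d-1}^\lambda(K^*,\bu)\,\vol_{\bd K^*}^\lambda(\dint\bu)
        = (d-1)(\log\lambda)\,\lambda^{-\frac{d-1}{2}}\Omega_\infty^\lambda(K)
            - \lambda^{-\frac{d-1}{2}}\!\int_{\bd K}\!\!H_{d-1}^\lambda(K,\bx)\log H_{d-1}^\lambda(K,\bx)\,\vol_{\bd K}^\lambda(\dint\bx).
\end{equation*}
Dividing by $P^\lambda(K^*)=\lambda^{-(d-1)/2}\Omega_\infty^\lambda(K)$ and subtracting $(d-1)\log\lambda$ precisely cancels the boundary term and yields the desired expression.

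The main technical points to check are (i) the justification of differentiation under the integral in the L'Hospital step, which is routine since $H_{d-1}^\lambda(K^*,\cdot)$ is bounded on $\bd K^*$ (as $K^*$ has non-empty interior), so $H_{d-1}^\lambda(K^*,\cdot)^{q/(d+q)}\log H_{d-1}^\lambda(K^*,\cdot)$ is dominated near $q=0$, and (ii) the change of variables in the duality step, which requires $\bn_K$ to be Lipschitz — exactly the $\cC^{1,1}$ assumption — and which then lets us invoke the area formula, exactly as in the proof of Theorem \ref{thm:duality}. No new ideas beyond those already used for the spherical case $\lambda=1$ are needed.
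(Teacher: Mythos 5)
Your proposal is correct and matches the paper's intent: the paper omits the proof of this theorem, stating only that it is analogous to Theorem \ref{thm:KL}, and your argument is exactly that adaptation — L'Hospital's rule on the $q\to 0^+$ limit for the first identity (with the prefactor $\lambda^{-(d-1)}$ contributing $-(d-1)\log\lambda$), and the duality change of variables with Jacobian $\lambda^{-(d-1)/2}H_{d-1}^\lambda(K,\cdot)$ together with $\Omega_\infty^\lambda(K)=\lambda^{(d-1)/2}P^\lambda(K^*)$ for the second. The bookkeeping of the $\lambda$-powers and the cancellation of the $(d-1)\log\lambda$ terms checks out.
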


The proof of this theorem is similar to the proof of Theorem \ref{thm:KL} and we omit it.  The theorem leads us to define the entropy $E_C^\lambda$ by
\begin{equation*}
    E_C^\lambda(K) = -\log  \cE^\lambda(K).
\end{equation*}

We obtain the following  corollary to Theorem \ref{thm:limit_lambda} and Theorem \ref{thm:lambda_Chow}.
\begin{corollary}
    Let $\overline{K}\subset \R^d$ be a convex body. For $\lambda>0$ we consider $\R^d$ as a projective model of an open half-space of $\Sp^d(\lambda)$. Then
    \begin{equation*}
        \lim_{\lambda \to 0^+}  E_C^\lambda(\overline{K}) = E_C(\overline{K}) \qquad \text{and} \qquad 
        \lim_{\lambda \to 1} E_C^\lambda(\overline{K}) = E^s(\overline{K}).
    \end{equation*}
\end{corollary}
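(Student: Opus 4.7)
The plan is to pass to the limit directly in the explicit integral representations of $E_C^\lambda$ supplied by Theorem \ref{thm:lambda_Chow}. Assuming first that $\overline{K}$ is of class $\cC^{1,1}$, we may write
\begin{equation*}
    E_C^\lambda(\overline{K}) = \frac{1}{\Omega_\infty^\lambda(\overline{K})} \int_{\bd \overline{K}} H_{d-1}^\lambda(\overline{K},\bu) \left[\log H_{d-1}^\lambda(\overline{K},\bu)\right] \, \vol_{\bd \overline{K}}^\lambda(\dint \bu),
\end{equation*}
so the task reduces to proving convergence of the numerator and denominator separately.

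For the limit $\lambda \to 1$, the integrand depends continuously on $\lambda$ via the gnomonic projection formulas, and $E_C^1$ coincides by construction with $E^s$, since the entropy power $\cE^1$ is defined through the same limit in $p$ as $\cE^s$ and $\Omega_p^1 = \Omega_p^s$. So this limit is essentially tautological once continuity in $\lambda$ of the integrand is noted.

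For the limit $\lambda\to 0^+$, I would use the gnomonic projection identities (cf.\ the proof of Theorem \ref{thm:limit_lambda}) to express $H_{d-1}^\lambda(\overline{K},\bx)$ and $\vol_{\bd \overline{K}}^\lambda$ as $H_{d-1}^e(\overline{K},\bx)$ and $\vol_{\bd \overline{K}}^e$ multiplied by correction factors of the form $(1+\lambda\|\bx\|^2)^\alpha\,(1+\lambda(\bx\cdot\bn_{\overline{K}}(\bx))^2)^\beta$. These factors converge to $1$ pointwise on the compact set $\bd \overline{K}$ and are uniformly bounded above and below in $\lambda\in[0,1]$. Dominated convergence then gives
\begin{equation*}
    \Omega_\infty^\lambda(\overline{K}) \xrightarrow{\lambda\to 0^+} \int_{\bd \overline{K}} H_{d-1}^e(\overline{K},\bx)\,\vol_{\bd \overline{K}}^e(\dint \bx) = \omega_{d-1},
\end{equation*}
where the right-hand identity follows by transforming to $\S^{d-1}$ via the Gauss map, and similarly
\begin{equation*}
    \int_{\bd \overline{K}} H_{d-1}^\lambda \bigl[\log H_{d-1}^\lambda\bigr]\, \vol_{\bd \overline{K}}^\lambda \xrightarrow{\lambda\to 0^+} \int_{\bd \overline{K}} H_{d-1}^e \bigl[\log H_{d-1}^e\bigr]\,\vol_{\bd \overline{K}}^e.
\end{equation*}
Dividing yields $E_C^\lambda(\overline{K}) \to E_C(\overline{K})$.

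The main obstacle is supplying an integrable dominating function when $\overline{K}$ is only $\cC^{1,1}$ rather than $\cC^2_+$, because $H_{d-1}^e(\overline{K},\cdot)$ can vanish on sets of positive measure and $H\log H$ is unbounded near $H=\infty$. For $\cC^2_+$ bodies, $H_{d-1}^e$ is bounded above and bounded away from zero on $\bd \overline{K}$, so the integrand is uniformly bounded and the argument is routine. For the general $\cC^{1,1}$ case, the lower-semicontinuity bound $t\log t \geq -e^{-1}$ handles the singularity at $H=0$, while boundedness from above on $\bd\overline{K}$ follows from the Lipschitz property of the Gauss map; alternatively, one may pass through the dual representation in Theorem \ref{thm:lambda_Chow} using the first formula in terms of $K^*$, which handles non-smooth cases without requiring pointwise curvature bounds.
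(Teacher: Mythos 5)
Your argument is correct and follows the same route the paper intends: the corollary is stated there as an immediate consequence of Theorems \ref{thm:limit_lambda} and \ref{thm:lambda_Chow}, i.e.\ one inserts the gnomonic-projection correction factors into the integral representation of $E_C^\lambda$ and passes to the limit by dominated convergence, exactly as you do. The only caveat is your closing fallback for non-smooth bodies: the first formula of Theorem \ref{thm:lambda_Chow} carries a $-(d-1)\log \lambda$ term and a dual body that degenerates as $\lambda\to 0^+$, so it cannot be used for that limit --- but this is moot, since $E_C$ is only defined for $\cC^2_+$ bodies in the paper anyway.
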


Thus the real-analytic Gaussian entropy $E_C^\lambda$ connects the Gaussian entropy $E_C$ in $\R^d$ with the spherical curvature entropy $E^s$ in $\S^d$.

\begin{example}
    For the geodesic ball $C^\lambda(\alpha)\subset \Sp^d(\lambda)$ with $\alpha \in (0,\frac{\pi}{2\sqrt{\lambda}})$ we have
    \begin{equation*}
        \cE^\lambda(C^\lambda(\alpha)) 
        = \frac{\Omega_0^\lambda(C^\lambda(\alpha))}{\Omega_\infty^\lambda(C^\lambda(\alpha))} 
        = (\tan_\lambda\alpha)^{d-1},
    \end{equation*}
    and
    \begin{equation*}
        E_C^\lambda(C^\lambda(\alpha)) = -(d-1) \log \tan_\lambda\alpha.
    \end{equation*}
    Thus
    \begin{equation*}
        \lim_{\lambda\to 0^+} E^\lambda_C(C^\lambda(\alpha)) = -(d-1) \log \alpha \qquad \text{and}\qquad 
        \lim_{\lambda\to 1} E^\lambda_C(C^\lambda(\alpha)) = -(d-1) \log \tan\alpha.
    \end{equation*}
\end{example}

\medskip\noindent
\textbf{Acknowledgments.} Elisabeth Werner was supported by NSF grant \texttt{DMS-2103482}. We extend our sincere gratitude to the Hausdorff Research Institute for Mathematics at the University of Bonn for fostering an outstanding and collaborative working environment during the dual trimester program \textit{Synergies between Modern Probability, Geometric Analysis, and Stochastic Geometry}.

\bibliographystyle{plain}

\begin{thebibliography}{99}

\bibitem{AB:2011}
    J.~Abardia, A.~Bernig,
    \emph{Projection bodies in complex vector spaces},
    Adv.\ Math.\ \textbf{227} (2011), 830--846.

\bibitem{Alesker:2018}
    S.~Alesker,
    \emph{Introduction to the theory of valuations},
    CBMS Regional Conference Series in Mathematics \textbf{126} (2018), Providence, RI, American Mathematical Society (AMS), 83 p.
    
\bibitem{Andrews:1997} 
    B.~Andrews,  \emph{Monotone quantities and unique limits for evolving convex hypersurfaces},  
    Int.\ Math.\ Res.\ Notices \textbf{1997} (1997), 1001--1031.

\bibitem{Andrews:1999} 
    B.~Andrews,  \emph{Gauss curvature flow: the fate of rolling stone}, Invent. Math.  \textbf{138} (1999),  151--161.

\bibitem{AAGM:2015}
    S.~Artstein-Avidan, A.~Giannopoulos, V.~D.~Milman, 
    \emph{Asymptotic geometric analysis.}
    Part I. Mathematical Surveys and Monographs \textbf{202} (2015). American Mathematical Society, Providence, RI. xx+451 pp.
    
\bibitem{AAKM:2004}
    S.~Artstein-Avidan, B.~Klartag, V.~Milman,
    \emph{The Santaló point of a function, and a functional form of the Santaló inequality},
    Mathematika \textbf{51} (2004), 33--48.
    
\bibitem{BHRS:2017}
    I.~Bárány, D.~Hug, M.~Reitzner, R.~Schneider,
    \emph{Random points in halfspheres},
    Random Struct.\ Algorithms \textbf{50} (2017), 3--22.
    
\bibitem{BBF:2014}
    F.~Barthe, K.~J.~Böröczky, M.~Fradelizi,
    \emph{Stability of the functional forms of the Blaschke-Santaló inequality},
    Monatsh.\ Math.\ \textbf{173} (2014), 135--159.

\bibitem{BHLL:2024}
    B.~Basit, S.~Hoehner, Z.~Lángi, J.~Ledford,
    \emph{Steiner Symmetrization on the Sphere},
    preprint (2024), arXiv: \href{https://arxiv.org/abs/2406.10614}{\texttt{2406.10614}}.

\bibitem{Blaschke:1923}
	W.~Blaschke, 
	\emph{Vorlesungen {\"u}ber {D}ifferentialgeometrie und geometrische {G}rundlagen von {E}insteins {R}elativit{\"a}tstheorie {II}.
	{A}ffine {D}ifferentialgeometrie ({G}erman) [Lectures on differential geometry and geometric foundations of Einstein's theory of relativity {II}.
	Affine differential geometry},
	Grundlehren der mathematischen Wissenschaften \textbf{7} (1923), Springer.

\bibitem{BKW:2023}
    A.~Bernig, J.~Kotrbatý, T.~Wannerer,
    \emph{Hard Lefschetz theorem and Hodge--Riemann relations for convex valuations},
    preprint (2023), arXiv: \href{https://arxiv.org/abs/2312.12294}{\texttt{2312.12294}}.
	
\bibitem{BGRSTW:2024}
    F.~Besau, A.~Gusakova, M.~Reitzner, C.~Schütt, C.~Thäle, E.~M.~Werner,
    \emph{Spherical convex hull of random points on a wedge},
    Math.\ Ann.\ \textbf{389} (2024), 2289--2316.
	
\bibitem{BHPS:2023}
    F.~Besau, T.~Hack, P.~Pivovarov, F.~E.~Schuster,
    \emph{Spherical centroid bodies},
    Am.\ J.\ Math.\ \textbf{145} (2023), 515--542.
	
\bibitem{BLW:2018}
    F.~Besau, M.~Ludwig, E.~M.~Werner, 
    \emph{Weighted floating bodies and polytopal approximation},
    Trans.\ Amer.\ Math.\ Soc.\ \textbf{370} (2018), 7129--7148.
    
\bibitem{BS:2016}
    F.~Besau, F.~E.~Schuster,
    \emph{Binary operations in spherical convex geometry},
    Indiana Univ.\ Math.\ J.\ \textbf{65} (2016), 1263--1288.
    
\bibitem{BW:2016}
    F.~Besau, E.~M.~Werner, 
    \emph{The spherical convex floating body},
    Adv.\ Math.\ \textbf{301} (2016), 867--901.
    
\bibitem{BW:2018}
    F.~Besau, E.~M.~Werner,
    \emph{The floating body in real space forms},
    J.\ Differential Geom.\ \textbf{110} (2018), 187--220.
    
\bibitem{BW:2024}
    F.~Besau, E.~M.~Werner,
    \emph{Floating bodies and duality in spaces of constant curvature},
    submitted (2023), arXiv: \href{https://arxiv.org/abs/2311.03070}{\texttt{2311.03070}}.

\bibitem{Bor:2010}
    K.~J.~Böröczky,
    \emph{Stability of the Blaschke-Santaló and the affine isoperimetric inequality},
    Adv.\ Math.\ \textbf{225} (2010), 1914--1928.

\bibitem{BH:2010}
    K.~J.~Böröczky, D.~Hug,
    \emph{Stability of the reverse Blaschke-Santaló inequality for zonoids and applications},
    Adv.\ in Appl.\ Math.\ \textbf{44} (2010), 309--328.
    
\bibitem{BS:2020}
    K.~J.~Böröczky, Á.~Sagmeister,
    \emph{The isodiametric problem on the sphere and in the hyperbolic space},
    Acta Math.\ Hungar.\ \textbf{160} (2020), 13--32.

\bibitem{BS:2023}
    K.~J.~Böröczky,  Á.~Sagmeister,
    \emph{Stability of the isodiametric problem on the sphere and in the hyperbolic space},
    Adv.\ in Appl.\ Math.\ \textbf{145} (2023), Paper No.\ 102480, 49 pp.
    
\bibitem{CW:2017} 
    U.~Caglar, E.~M.~Werner,
    \emph{Stability results  for some  geometric inequalities and their functional versions},
    The IMA Volumes in Mathematics and its Applications \textbf{161} (2017): Convexity and Concentration,  Springer, 541--564.

\bibitem{CGLS:2022}
    C.~Chen, P.~Guan, J.~Li, J.~Scheuer,
    \emph{A fully-nonlinear flow and quermassintegral inequalities in the sphere},
    Pure Appl.\ Math.\ Q.\ \textbf{18} (2022), 437--461. 
    
\bibitem{CS:2022}
    M.~Chen, J.~Sun, 
    \emph{Alexandrov-Fenchel type inequalities in the sphere},
    Adv.\ Math.\ \textbf{397} (2022), Paper No.\ 108203, 25 pp.
    
\bibitem{Chow:1991} 
    B.~Chow,  
    \emph{On Harnack’s inequality and entropy for the Gaussian curvature flow}, 
    Comm.\ Pure Appl.\ Math.\ \textbf{44}  (1991), 469--483.

\bibitem{CKLR:2024}
    A.~Colesanti, A.~Kolesnikov, G.~Livshyts, L.~Rotem,
    \emph{On weighted Blaschke--Santaló and strong Brascamp--Lieb inequalities},
    preprint (2024), arXiv: \href{https://arxiv.org/abs/2409.11503}{2409.11503}.
    
\bibitem{CT:2006}
    T.~M.~Cover, J.~A.~Thomas, 
    \emph{Elements of information theory},
    Second edition, Wiley-Interscience [John Wiley \& Sons], Hoboken, NJ, 2006. xxiv+748 pp.

\bibitem{DKY:2018}
    S.~Dann, J.~Kim, V.~Yaskin,
    \emph{Busemann’s intersection inequality in hyperbolic and spherical spaces},
    Adv.\ Math.\ \textbf{326} (2018), 521--560.
    
\bibitem{DPP:2019}
    S.~Dann, G.~Paouris, P.~Pivovarov,
    \emph{Affine isoperimetric inequalities on flag manifolds},
    preprint (2019), arXiv: \href{https://arxiv.org/abs/1902.09076}{\texttt{1902.09076}}.
    
\bibitem{Daskalopoulos:2004}
    P.~Daskalopoulos, K.~A.~Lee,  \emph{Worn stones with flat sides all time regularity of the interface},
    Invent.\ Math.\  \textbf{156} (2004),  445--493.

\bibitem{Faifman:2024}
    D.~Faifman,
    \emph{A Funk perspective on billiards, projective geometry and Mahler volume},
    J.\ Differential Geom.\ \textbf{127} (2024), 161--212.
    
\bibitem{Fiery:1977}  
    W.~J.~Firey, 
    \emph{On the shapes of worn stones}, 
    Mathematika  \ \textbf{21} (1974), 1–11.  

\bibitem{FLM:1977}
    T.~Figiel, J.~Lindenstrauss, V.~D.~Milman,
    \emph{The dimension of almost spherical sections of convex bodies},
    Acta Math.\ \textbf{139} (1977), 53--94.
    
\bibitem{FGSZ:2024}
    M.~Fradelizi, N.~Gozlan, S.~Sadovsky, S.~Zugmeyer,
    \emph{Transport-entropy and functional forms of Blaschke--Santaló inequalities},
    Rev.\ Mat.\ Iberoam.\ \textbf{40} (2024), 1917--1952.
    
\bibitem{FM:2007}
    M.~Fradelizi, M.~Meyer,
    \emph{Some functional forms of Blaschke-Santaló inequality},
    Math.\ Z.\ \textbf{256} (2007), 379--395.
    
\bibitem{FMZ:2023}
    M.~Fradelizi, M.~Meyer, A.~Zvavitch,
    \emph{Volume product},
    Adv.\ Anal.\ Geom.\ \textbf{9} (2023),
    Harmonic analysis and convexity, Berlin, De Gruyter, 163--221.

\bibitem{GH:1986}
    M.~Gage, R.~S.~Hamilton,
    \emph{The heat equation shrinking convex plane curves},
    J.\ Differential Geom.\ \textbf{23} (1986), 69--96.
    
\bibitem{GHS:2003}
    F.~Gao, D.~Hug, R.~Schneider,
    \emph{Intrinsic volumes and polar sets in spherical space},
    Math.\ Notae \textbf{41} (2003), 159--176.
        
\bibitem{GHSW:2020}
    O.~Giladi,  H.~Huang, C.~Sch\"utt, E.~M.~Werner,
    \emph{Constrained convex bodies with extremal affine surface areas},
    J.\ Funct.\ Anal.\ \textbf{279} (2020), 22 p.

\bibitem{GN:2017}
    P.~Guan, L.~Ni, 
    \emph{Entropy and a convergence theorem for Gauss curvature flow in high dimension},
    J.\ Eur.\ Math.\ Soc.\ \textbf{19} (2017), 3735--3761.

\bibitem{Haberl:2019}
    C.~Haberl,
    \emph{Complex affine isoperimetric inequalities},
    Calc.\ Var.\ Partial Differential Equations \textbf{58} (2019), Paper No.\ 169, 22 pp.

\bibitem{HP:2014}
    C.~Haberl, L.~Parapatits,
    \emph{The centro-affine Hadwiger theorem},
    J.\ Am.\ Math.\ Soc.\ \textbf{27} (2014), 685--705.
    
\bibitem{HS:2019}
    C.~Haberl, F.~E.~Schuster,
    \emph{Affine vs.\ Euclidean isoperimetric inequalities}
    Adv.\ Math.\ \textbf{356} (2019), 106811, 26 pp.
    
\bibitem{HP:2021}
    T.~Hack, P.~Pivovarov,
    \emph{Randomized Urysohn-type inequalities},
    Mathematika \textbf{67} (2021), 100--115.
    
\bibitem{HJM:2020}
    J.~Haddad, C.~H.~Jiménez, M.~Montenegro,
    \emph{Asymmetric Blaschke-Santaló functional inequalities}
    J.\ Funct.\ Anal.\ \textbf{278} (2020), 108319, 18 pp.
    
\bibitem{HLPRY:2023}
    J.~Haddad, D.~Langharst, E.~Putterman, M.~Roysdon, D.~Ye,
    \emph{Affine isoperimetric inequalities for higher-order projection and centroid bodies},
    preprint (2023), arXiv: \href{https://arxiv.org/abs/2304.07859}{\texttt{2304.07859}}.
    
\bibitem{Hamiltion:1994}
    R.\ Hamilton, 
    \emph{Remarks on the entropy and Harnack estimates for the Gauss curvature flow} 
    Comm.\ Anal.\ Geom.\ \textbf{2} (1994), 155--165.
        
\bibitem{Hoehner:2022}
    S.~Hoehner,
    \emph{Extremal general affine surface areas},
    J.\ Math.\ Anal.\ Appl.\ \textbf{505} (2022), Paper No.\ 125506, 21 pp.

\bibitem{HCMF:2015}
    M.~A.~Hernández Cifre, A.~R.~Martínez Fernández,
    \emph{The isodiametric problem and other inequalities in the constant curvature $2$-spaces},
    Rev.\ R.\ Acad.\ Cienc.\ Exactas Fís.\ Nat.\ Ser.\ A Mat.\ RACSAM \textbf{109} (2015), 315--325.
    
\bibitem{HS:2023}
    G.~C.~Hofstätter, F.~E.~Schuster,
    \emph{Blaschke-Santaló inequalities for Minkowski and Asplund endomorphisms},
    Int.\ Math.\ Res.\ Not.\ IMRN \textbf{2023} (2023), 1378--1419.
	
\bibitem{HL:2023}
    Y.~Hu, H.~Li,
    \emph{Blaschke-Santaló type inequalities and quermassintegral inequalities in space forms},
    Adv.\ Math.\ \textbf{413} (2023), Paper No.\ 108826, 31 pp.
	
\bibitem{Hug1:1996}
    D.~Hug, 
    \emph{Contributions to affine surface area},
    Manuscripta Math.\ \textbf{91} (1996), 283--301.

\bibitem{Hug2:1996}
    D.~Hug, 
    \emph{Curvature relations and affine surface area for a general convex body and its polar},
    Results Math.\ \textbf{29} (1996), 233--248.
    
\bibitem{HR:2017}
    D.~Hug, A.~Reichenbacher, 
    \emph{Geometric inequalities, stability results and Kendall's problem in spherical space},
    preprint (2017), arXiv: \href{https://arxiv.org/abs/1709.06522}{\texttt{1709.06522}}.

\bibitem{HLXY:2022}
    Q.~Huang, A.~Li, D.~Xi, D.~Ye,
    \emph{On the sine polarity and the Lp-sine Blaschke-Santaló inequality}
    J.\ Funct.\ Anal.\ \textbf{283} (2022), Paper No.\ 109571, 32 pp.
    
\bibitem{Ivaki:2014}
    M.~N.~Ivaki,
    \emph{On the stability of the $p$-affine isoperimetric inequality},
    J.\ Geom.\ Anal.\ \textbf{24} (2014), 1898--1911.

\bibitem{Ivaki:2015}
    M.~N.~Ivaki,
    \emph{Stability of the Blaschke-Santaló inequality in the plane}, 
    Monatsh.\ Math.\ \textbf{177} (2015), 451--459.
    
\bibitem{KMTT:2019}
    Z.~Kabluchko, A.~Marynych, D.~Temesvari, C.~Thäle,
    \emph{Cones generated by random points on half-spheres and convex hulls of Poisson point processes},
    Probab.\ Theory Relat.\ Fields \textbf{175} (2019), 1021--1061.
    
\bibitem{Knoerr:2023}
    J.~Knoerr,
    \emph{Smooth valuations on convex bodies and finite linear combinations of mixed volumes},
    preprint (2023), arXiv: \href{https://arxiv.org/abs/2312.08183}{2312.08183}.
    
\bibitem{Kohlmann:1998}
    P.~Kohlmann,
    \emph{Characterization of metric balls in standard space forms via curvature},
    Math.\ Z.\ \textbf{227} (1998), 729--746.
    
\bibitem{KW:2021}
    A.~V.~Kolesnikov, E.~M.~Werner, 
    \emph{Blaschke-Santaló inequality for many functions and geodesic barycenters of measures},
    Adv.\ Math.\ \textbf{396} (2022), Paper No.\ 108110, 44 pp. 

    
\bibitem{LMU:2024}
    D.~Langharst, F.~Marín Sola, J.~Ulivelli,
    \emph{Higher-Order Reverse Isoperimetric Inequalities for Log-concave Functions},
    preprint (2024), arXiv: \href{https://arxiv.org/abs/2403.05712}{\texttt{2403.05712}}.
    
\bibitem{LRZ:2023}
    D.~Langharst, M.~Roysdon, A.~Zvavitch,
    \emph{General measure extensions of projection bodies},
    Proc.\ Lond.\ Math.\ Soc.\ \textbf{125} (2022), 1083--1129.
        
\bibitem{Levy:1922}
    P.~Lévy,
    \emph{Leçons d'analyse fonctionnelle (French) [Lessons in functional analysis]},
    Collection de Monographies sur la Théorie des Fonctions. Paris, Gauthier-Villars 1922. vi+442 pp.
    
\bibitem{Ludwig:2010}
    M.~Ludwig, 
    \emph{General affine surface areas},
    Adv.\ Math.\ \textbf{224} (2010), 2346--2360. 
    
\bibitem{LM:2023}
    M.~Ludwig, F.~Mussnig,
    \emph{Valuations on convex bodies and functions},
    Lect.\ Notes Math.\ \textbf{2332} (2023), 
    Convex geometry, Springer, Florence, Fondazione CIME, 19--78.

\bibitem{LR:1999}
    M.~Ludwig, M.~Reitzner,
    \emph{A characterization of affine surface area},
    Adv.\ Math.\ \textbf{147} (1999), 138--172.
    
\bibitem{LR:2010}
	M.~Ludwig, M.~Reitzner, 
	\emph{A classification of \,{${\rm SL}(n)$} invariant valuations}, 
	Ann.\ of Math.\ \textbf{172} (2010), 1219--1267.

\bibitem{Lutwak:1996:2}
    E.~Lutwak,
    \emph{On some affine isoperimetric inequalities},
    J.\ Differential Geom.\ \textbf{23} (1986), 1--13.
	
\bibitem{Lutwak:1996}
    E.~Lutwak,
    \emph{The Brunn-Minkowski-Firey theory. II. Affine and geominimal surface areas},
    Adv.\ Math.\ \textbf{118} (1996), 244--294.
    
\bibitem{LYZ:2000}
    E.~Lutwak, D.~Yang, G.~Zhang,
    \emph{$L_p$ affine isoperimetric inequalities},
    J.\ Differential Geom.\ \textbf{56} (2000), 111--132.
    
\bibitem{LZ:1997}
    E.~Lutwak, G.~Zhang,
    \emph{Blaschke-Santaló inequalities},
    J.\ Differential Geom.\ \textbf{47} (1997), 1--16.

\bibitem{MS:2016}
    M.~Makowski, J.~Scheuer,
    \emph{Rigidity results, inverse curvature flows and Alexandrov-Fenchel type inequalities in the sphere},
    Asian J.\ Math.\ \textbf{20} (2016), 869--892.
    
\bibitem{MW:2000}
    M.~Meyer, E.~M.~Werner,
    \emph{On the $p$-affine surface area},
    Adv.\ Math.\ \textbf{152} (2000), 288--313.
    
\bibitem{MY:2023}
    E.~Milman, A.~Yehudayoff,
    \emph{Sharp isoperimetric inequalities for affine quermassintegrals},
    J.\ Amer.\ Math.\ Soc.\ \textbf{36} (2023), 1061--1101.

\bibitem{OMK:2020}
    A.~Okano, T.~Matsumoto, T.~Kato,
    \emph{Gaussian curvature entropy for curved surface shape generation},
    Entropy \textbf{22} (2020), Paper No.\ 353, 22 pp.
    
\bibitem{PW:2012}
   G.~Paouris,  E.~M.~Werner,
    \emph{Relative entropy of cone measures and $L_p$ centroid bodies},
    Proc.\ London Math.\ Soc. \textbf {104} (2012),  253--286.

\bibitem{Rotem:2014}
    L.~Rotem,
    \emph{A sharp Blaschke-Santaló inequality for $\alpha$-concave functions}
    Geom.\ Dedicata \textbf{172} (2014), 217--228.

\bibitem{Schmidt:1948}
    E.\ Schmidt, 
    \emph{Die Brunn--Minkowskische Ungleichung und ihr Spiegelbild sowie die isoperimetrische Eigenschaft der Kugel in der euklidischen und nichteuklidischen Geometrie. I. (German) [The Brunn--Minkowski inequality and their analogs as well as the isoperimetric property of the ball in Euclidean and non-Euclidean geometry. I.]},
    Math.\ Nachr.\ \textbf{1} (1948), 81--157.
    
\bibitem{Schneider:2014}
	R.~Schneider, 
	\emph{Convex {B}odies: the {B}runn-{M}inkowski {T}heory},
	Encyclopedia of Mathematics and its Applications \textbf{151} (2014),
	Cambridge University Press.

\bibitem{SW:2008}
    R.~Schneider, W.~Weil,
    \emph{Stochastic and integral geometry}, 
    Probability and its Applications (New York). Springer-Verlag, Berlin, 2008. xii+693 pp.
	
\bibitem{SW:1990}
    C.~Schütt, E.~M.~Werner,
    \emph{The convex floating body}, 
    Math.\ Scand.\ \textbf{66} (1990), 275--290.

\bibitem{SW:2004}
	C.~Sch\"utt, E.~M.~Werner, 
 	\emph{Surface bodies and {$p$}-affine surface area},
 	Adv.\ Math.\ \textbf{187} (2004), 98--145.
 	
\bibitem{Solanes:2006}
    G.~Solanes,
    \emph{Integral geometry and the Gauss-Bonnet theorem in constant curvature spaces},
    Trans.\ Amer.\ Math.\ Soc.\ \textbf{358} (2006), 1105--1115.

\bibitem{SW:2018}
    F.~E.~Schuster, T.~Wannerer,
    \emph{Minkowski valuations and generalized valuations},
    J.\ Eur.\ Math.\ Soc.\ \textbf{20} (2018), 1851--1884.
    
\bibitem{Stancu:2012}
    A.~Stancu,
    \emph{Centro-affine invariants for smooth convex bodies},
    Int.\ Math.\ Res.\ Not.\ IMRN \textbf{2012} (2012), 2289--2320.
    
\bibitem{TW:2019}
    K.~Tatarko, E.~M.~Werner,
    \emph{A Steiner formula in the $L_p$ Brunn Minkowski theory},
    Adv.\ Math.\ \textbf{355} (2019), 1--27.


\bibitem{TW:2023}
    K.~Tatarko, E.~M.~Werner,
    \emph{$L_p$-Steiner quermassintegrals},
    Adv.\ Math.\ \textbf{430} (2023), 35 p.

\bibitem{UKSM:2012}
    Y.~Ujiie, T.~Kato, K~Sato, Y.~Matsuoka,
    \emph{Curvature Entropy for Curved Profile Generation},
    Entropy \textbf{14} (2012), 533--558.

\bibitem{VY:2019}
    C.~Vernicos, D.~Yang,
    \emph{A centro-projective inequality},
    C.\ R.\ Math.\ Acad.\ Sci.\ Paris \textbf{357} (2019), 681--685.
    
\bibitem{Vritsiou:2024}
    B.-H.~Vritsiou,
    \emph{Regular ellipsoids and a Blaschke-Santaló-type inequality for projections of non-symmetric convex bodies},
    J.\ Funct.\ Anal.\ \textbf{286} (2024), Paper No.\ 110414, 40 pp.
    
\bibitem{WX:2015}
    Y.~Wei, C.~Xiong,
    \emph{Inequalities of Alexandrov-Fenchel type for convex hypersurfaces in hyperbolic space and in the sphere},
    Pacific J.\ Math.\ \textbf{277} (2015), 219--239.
 	
\bibitem{Werner:2002}
    E.~M.~Werner, \emph{The $p$-affine surface area and geometric interpretations},
    {IV} International Conference in ``Stochastic Geometry, Convex Bodies, Empirical Measures \& Applications to Engineering Science'', Vol.\ {II}, 
    Rend.\ Circ.\ Mat.\ Palermo \textbf{70} (2002), 367--382. 

\bibitem{WY:2008}
    E.~M.~Werner, D.~Ye,
    \emph{New $L_p$ affine isoperimetric inequalities},
    Adv.\ Math.\ \textbf{218} (2008), 762--780.

\bibitem{XZ:2022}
    D.~Xi, Y.~Zhao,
    \emph{General affine invariances related to Mahler volume},
    Int.\ Math.\ Res.\ Not.\ IMRN \textbf{2022} (2022), 14151--14180.
    
\bibitem{Ye:2015}
    D.~Ye,
    \emph{New Orlicz affine isoperimetric inequalities},
    J.\ Math.\ Anal.\ Appl.\ \textbf{427} (2015), 905--929.
    
\bibitem{Ye:2016}
    D.~Ye,
    \emph{Dual Orlicz-Brunn-Minkowski theory: dual Orlicz $L_\phi$ affine and geominimal surface areas},
    J.\ Math.\ Anal.\ Appl.\ \textbf{443} (2016), 352--371.
    
\bibitem{Yokota:2017}
    T.~Yokota,
    \emph{Convex functions and $p$-barycenter on $\mathrm{CAT(1)}$-spaces of small radii},
    Tsukuba J.\ Math.\ \textbf{41} (2017), 43--80.


\end{thebibliography}

\medskip
\parindent=0pt

\bigskip
\begin{samepage}
	Florian Besau\\
	Institute of Discrete Mathematics and Geometry\\
	Technische Universität Wien (TU Wien)\\
	Wiedner Hauptstrasse 8--10, 1040 Vienna, Austria\\
	e-mail: florian@besau.xyz
\end{samepage}

\medskip 
\begin{samepage}
	Elisabeth M.\ Werner\\
	Department of Mathematics, Applied Mathematics and Statistics\\
	Case Western Reserve University\\
	10900 Euclid Avenue, Cleveland, Ohio 44106, USA\\
	e-mail: elisabeth.werner@case.edu
\end{samepage}

\end{document}